\newcommand{\tr}{\text{tr}}
\renewcommand{\vec}{\text{vec}}
\newcommand{\asymptotic}{{M+N\rightarrow \infty}}
\DeclareMathOperator*{\argmin}{arg\,min}
\newtheorem{assump}{Assumption}
\newtheorem{thm}{Theorem}
\newtheorem{corr}{Corollary}
\newtheorem{lem}{Lemma}
\newtheorem{eg}{Example}
\newcommand{\rc}[1]{{\color{blue}{#1}}}
\newcommand{\cc}[1]{{\color{red}{#1}}}
\newcounter{remark}
\begin{document}
\title{KoPA: Automated Kronecker Product Approximation}

\author{\name Chencheng Cai \email chencheng.cai@rutgers.edu \\
\addr Department of Statistics \\
Rutgers University\\
Piscataway, NJ 08854, USA
\AND
\name Rong Chen \email rongchen@stat.rutgers.edu\\
\addr Department of Statistics \\
Rutgers University\\
Piscataway, NJ 08854, USA
\AND
\name Han Xiao \email hxiao@stat.rutgers.edu\\
\addr Department of Statistics \\
Rutgers University\\
Piscataway, NJ 08854, USA
}
\editor{}

\maketitle

\begin{abstract}%
  We consider the problem of matrix approximation and denoising induced by the Kronecker
  product decomposition. Specifically, we propose to approximate a given matrix by the sum of a few
  Kronecker products of matrices, which we refer to as the Kronecker product
  approximation (KoPA). Because the Kronecker product is an extensions of the outer product from vectors to matrices, KoPA extends the low rank matrix approximation, and includes it as a special case. Comparing with the latter, KoPA also offers a greater flexibility, since it allows
  the user to choose the configuration, which are the dimensions of the two smaller
  matrices forming the Kronecker product. On
  the other hand, the configuration to be used is usually
  unknown, and needs to be determined from the data in order to achieve the optimal balance between accuracy and parsimony. We propose to use
  extended information criteria to select the
  configuration. Under the paradigm of high dimensional analysis, we show
  that the proposed procedure is able to select the true configuration
  with probability tending to one, under suitable conditions on the
  signal-to-noise ratio. We demonstrate the
  superiority of KoPA over the low rank approximations through
  numerical studies, and several benchmark image examples.
\end{abstract}

\begin{keywords}  Information Criterion, Kronecker Product, Low Rank Approximation, Matrix Decomposition, Random Matrix
\end{keywords}

\section{Introduction}\label{sec:intro}

Observations that are matrix/tensor valued have been commonly seen in
various scientific fields and social studies. 
In recent years, advances in technology have made high dimensional matrix/tensor type
data possible and more and more
prevalent. Examples include high resolution images in face recognition and
motion detection \citep{turk1991face, bruce1986understanding, parkhi2015deep}, brain images through fMRI \citep{belliveau1991functional, maldjian2003automated}, adjacent matrices of
social networks of millions of nodes \citep{goldenberg2010survey}, the covariance matrix of
thousands of stock returns \citep{ng1992multi, fan2011high}, the import/export network among hundreds
of countries \citep{chen2019factor}, etc. 
Due to the high dimensionality
of the data, it is often useful and preferred to store, compress,
represent, or summarize the matrices/tensors through low dimensional
structures. In particular, low rank approximations of matrices have
been ubiquitous. Finding a low rank approximation
of a given matrix is closely related to the singular value
decomposition (SVD), and the connection was revealed as early as \cite{Eckart1936the}. SVD has proven extremely useful in
matrix completion \citep{candes2009exact, candes2010matrix,
  cai2010singular}, community detection \citep{le2016optimization},
image denoising \citep{guo2015efficient}, among many others.

In this paper, we investigate matrix approximations induced by the
Kronecker product. 
%Since the \LaTeX~command for the Kronecker product is {\verb|\otimes|}, 
Since the Kronecker product is an extension of the outer product, we call the proposed method KoPA (Kronecker outer Product Approximation). Kronecker product is an operation on two matrices
which generalizes the outer product from vectors to
matrices. Specifically, the Kronecker product of a $p\times q$
matrix $\bm A=(a_{ij})$ and a $p'\times q'$ matrix $\bm B=(b_{ij})$,
denoted by $\bm A\otimes \bm B$, is defined as a
$(pp')\times (qq')$ matrix which takes the form of a block
matrix.  In $\bm A\otimes\bm B$, there are $pq$ blocks of size
$p'\times q'$, where the $(i,j)$-th block is the scalar product
$a_{ij}\bm B$. We refer the readers to \cite{horn1991topics} and
\cite{van1993approximation} for overviews of the properties and computations
of the Kronecker product. Kronecker product has also found wide
applications in signal processing, image restoration and quantum
computing, etc. For example, in the statistical modeling of a
multi-input multi-output (MIMO) channel communication system,
\cite{Werner2008On} modeled the covariance matrix of channel signals
as the Kronecker product of the transmit covariance matrix and the
receive covariance matrix. In compressed sensing,
\cite{Duarte2012Kronecker} utilized Kronecker products to provide a
sparse basis for high-dimensional signals. In image restoration,
\cite{Kamm1998Kronecker} considered the blurring operator as a
Kronecker product of two smaller matrices. In quantum computing,
\cite{Kaye2007introduction} represented the joint state of quantum
bits as a Kronecker product of their individual states.

In SVD, a matrix is represented as the sum of rank one matrices, where
each of them is represented as the outer product of the left
singular vector and the corresponding right singular vector (after
the transpose). Similarly, the Kronecker Product
Decomposition (KPD) of a $(pp')\times(qq')$ matrix $\bm C$ is defined as
\begin{equation*}
  \bm C= \sum_{k=1}^{d} \bm A_k\otimes \bm B_k.
\end{equation*}
where $d=\min\{pq,p'q'\}$, and $\bm A_k$ and $\bm B_k$ are
$p\times q$ and $p'\times q'$ respectively. In the definition of
the KPD, the dimensions of $\bm A_k$ and $\bm B_k$ have to be specified, which (in this case, $p\times q$ and $p'\times q'$) we refer to as the {\it configuration} of the KPD. 
Further constraints on $\bm A_k$ and $\bm B_k$ are necessary to make the
decomposition well defined and unique, but we will defer the exact
definition of KPD to Section~\ref{sec:kpm}. Since the Kronecker
product is an extension of the vector outer product, so is
KPD of SVD. In particular, if $p=1$, and $q'=1$, then $\bm A_k$ and $\bm B_k$ are column
and row vectors respectively, and the KPD, under this particular configuration, becomes the SVD.

Similar to  rank-one approximation, the best matrix approximation
given by a Kronecker product is formulated as finding the closest
Kronecker product under the Frobenius norm. This was introduced in the
matrix computation literature as the nearest Kronecker product (NKP)
problem in \cite{van1993approximation}, who also demonstrated its
equivalence to the best rank one approximation and therefore also to
the SVD, after a proper rearrangement of the matrix entries. 
% Such an
% equivalence is also maintained if one seeks the best approximation of
% a given matrix by the sum $\sum_{k=1}^K \bm A_k\otimes \bm B_k$ of $K$
% Kronecker products of the same configuration. 
% \rc{Change to: 
Such an
equivalence is also maintained if one seeks the best approximation of
a given matrix by the sum of $K$ Kronecker products of the same configuration,
 $\sum_{k=1}^K \bm A_k\otimes \bm B_k$.
% }
Despite of its
connection to SVD, finding a best Kronecker approximation also
involves a pre-step: determining the configurations of the Kronecker
products, i.e., determining the dimensions of $\bm A_k$ and $\bm B_k$. One of
our major contributions in this paper is on the selection of the
configuration based on an information criterion.

Although the configuration selection poses new challenges, KPD also
provides a framework that is more flexible than SVD. Here we use the
cameraman's image, a benchmark in image analysis, to illustrate the
potential advantage of KPD over SVD. The left panel in Figure
\ref{fig:cameraman_intro} is the 512$\times$512 pixel image of a cameraman in
gray scale. The middle panel shows the best rank-1 approximation of
the original image given by the leading term of SVD. The rank-1
approximation explains $45.63\%$ of the total variation of the
original image with 1023 parameters. The right panel in Figure
\ref{fig:cameraman_intro} displays the image obtained by the nearest
Kronecker product of configuration
$(16\times 32)\otimes(32\times 16)$. With the same number of parameters as
the rank-1 approximation, this nearest Kronecker product approximation
explains $77.55\%$ of the variance of the original image.

\begin{figure}[!htpb]
    \centering
    \includegraphics[width=0.3\textwidth]{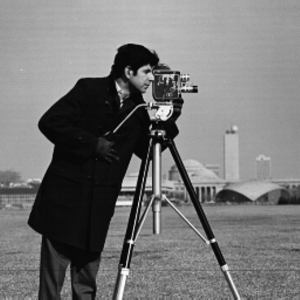}
    \includegraphics[width=0.3\textwidth]{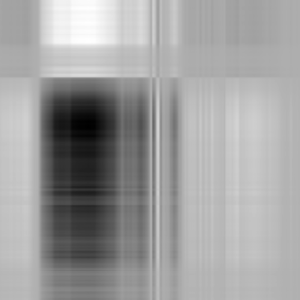}
    \includegraphics[width=0.3\textwidth]{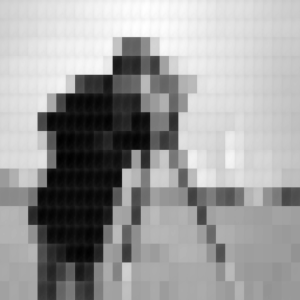}
    \caption{(Left) Original cameraman's image; (Middle) SVD approximation; (Right) KPD approximation}
    \label{fig:cameraman_intro}
\end{figure}

We will revisit the cameraman's image in Section~\ref{sec:numerical} with a more detailed analysis. We notice here that the superiority of KoPA over low rank approximation in representing images is partially due to the similarity of local blocks in the image. In this regard it is related to the patch based de-noising methods  \citep{dabov2007image, chatterjee2011patch} in the field of image processing, which explore the recurrence of similar local pattern throughout the image.
However, we have a substantially distinct focus in this paper. One of our main objectives is to devise a formal procedure to determine the configuration, or the ``patch size”, from the data, which is usually chosen in an {\it ad hoc} manner in patch based methods. We introduce a statistical model to characterize the image generating mechanism, and propose to use information criteria to select the configuration. Practically it implies an emphasis on the balance between the complexity (number of model parameters) and accuracy (closeness to the original image). Furthermore, the KoPA framework and the model selection also has potential applications in high dimensional panel time series, large network analysis, recommending systems, and other matrix-type data analysis.
For example, in modeling dense networks \citep{leskovec2010kronecker}, the adjacency matrix can be represented by a Kronecker product $\bm A\otimes\bm B$, where $\bm A$ and $\bm B$ correspond to the inter- and inner-community structures respectively. As a second example, the KoPA may as well replace the low rank approximation in the synchronization problem \citep{chen2008extended,singer2011angular} to identify the groups/clusters of the individuals, at the same time of denoising the distance matrix. It is worth mentioning that KoPA can also be used to speed up the computation. If the transition matrix of a Markov Chain can be represented as one or a sum of a few Kronecker products, then the state update can be calculated more efficiently \citep{dayar2012analyzing}. KoPA plays its role in guiding the choice of the Kronecker product approximation of the transition matrix.

In this paper, we focus on the model
\begin{equation*}
\bm Y = \lambda\bm A\otimes \bm B + \sigma \bm E,
\end{equation*}
where $\bm E$ is a standard Gaussian ensemble consisting of IID
standard normal entries, $\lambda>0$ and $\sigma>0$ indicates the
strength of signal and noise respectively. We consider the matrix de-noising problem which aims to recover the
Kronecker product $\lambda\bm A\otimes \bm B$ from the noisy
observation $\bm Y$. Here the configuration of the Kronecker product,
i.e. the dimensions of $\bm A$ and $\bm B$, is to be determined from
the data. We propose to use information criteria (which include
AIC and BIC as special cases) to select the configuration, and prove
its consistency under some conditions on the signal-to-noise
ratio. The consistency of the configuration selection is established
for both deterministic and random $\bm A$ and $\bm B$, under the paradigm of high dimensional analysis, where the dimension of $\bm Y$ diverges to infinity.

The rest of the paper is organized as follows. In Section
\ref{sec:kpm}, we give the precise definition of the KPD, and introduce the model, with a review of some of their basic
properties. In Section \ref{sec:estimation-and-config}, we propose
the information criteria for selecting the configuration of the
Kronecker product. We investigate and establish the consistency of
the proposed selection procedure in
Section~\ref{sec:theory}. Extension to the multi-term Kronecker product
models is discussed in Section \ref{sec:extension}.  In Section
\ref{sec:numerical}, we carry out extensive simulations to
assess the performance of our method, and demonstrate its superiority
over the SVD approach. We also present a detailed analysis of the cameraman's
image. 

\textbf{Notations:} Throughout this paper, for a vector $v$, $\|v\|$
denotes its Euclidean norm. And for a matrix $\bm M$,
$\|\bm M\|_F=\sqrt{\tr(\bm M'\bm M)}$ and
$\|\bm M\|_S=\max_{\|u\|=1}\|\bm Mu\|$ denote its Frobenius norm and
spectral norm respectively. For any two real numbers $a$ and $b$,
$a\wedge b$ and $a\vee b$ stand for $\min\{a, b\}$ and $\max\{a, b\}$
respectively. For any number $x$, $x_+$ denotes the positive part
$x\vee 0 = \max\{x, 0\}$.

\section{Kronecker Product Model}\label{sec:kpm}

\subsection{Kronecker Product Decomposition}
\label{sec:kpd}
We first repeat the definition of the Kronecker product of a $p\times q$ matrix $\bm A$ and a $p'\times q'$ matrix $\bm B$, which is given by
$$
    \bm A\otimes \bm B = \begin{bmatrix}
    a_{1,1}\bm B &  a_{1, 2}\bm B & \cdots& a_{1, q}\bm B\\
    a_{2,1}\bm B & a_{2, 2}\bm B & \cdots& a_{2, q}\bm B\\
    \vdots & \vdots &&\vdots\\
    a_{p, 1}\bm B & a_{p, 2}\bm B&\cdots & a_{p,q}\bm B
    \end{bmatrix}.
$$

Let $\bm C$ be a $(pp')\times(qq')$ real matrix, its Kronecker
Product Decomposition (KPD) of configuration $(p,q,p',q')$ is
defined as
\begin{equation}
  \label{eq:kpd}
  \bm C= \sum_{k=1}^{d}  \lambda_k \bm A_k\otimes \bm B_k.
\end{equation}
where $d=\min\{pq,p'q'\}$, each $\bm A_k$ is a $p\times q$ matrix
with Frobenius norm $\|\bm A_k\|_F=1$, each $\bm B_k$ is a
$p'\times q'$ matrix with $\|\bm B_k\|_F=1$, and
$\lambda_1\geqslant\lambda_2\geqslant\cdots\geqslant\lambda_d\geqslant 0$. The matrices
$\bm A_k$ are mutually orthogonal in the sense that
$\hbox{tr}(\bm A_k\bm A_l')=0$ for $1\leqslant k<l\leqslant d$, and so are the
matrices $\bm B_k$. 

The best way to see that the KPD is a valid definition is through its
connection with the SVD, after a proper rearrangement of the
elements of $\bm C$, as demonstrated in \cite{van1993approximation}.
Denote by $\text{vec}(\cdot)$ the vectorization of a matrix by stacking its rows. If $\bm A=(a_{ij})$ is a $p\times q$ matrix, then
$$\text{vec}(\bm A) := [a_{1,1}, \dots, a_{1, q}, \dots, a_{p, 1},\dots, a_{p, q}]'.$$
If $\bm B=(b_{ij})$ is a $p'\times q'$ matrix, then
$\text{vec}(\bm A)[\text{vec}(\bm B)]'$ is a $(pq)\times(p'q')$
matrix containing the same set of elements as the Kronecker product
$\bm A\otimes \bm B$, but in different positions. We
define the rearrangement operator $\mathcal R$ to represent this relationship. Write the matrix
$\bm C$ as a $p\times q$ array of blocks of the same block size
$p'\times q'$, and denote by $C_{i,j}^{p',q'}$ the $(i,j)$-th
block, where $1\leqslant i\leqslant p,\,1\leqslant j\leqslant q$. The operator
$\mathcal R$ maps the matrix $\bm C$ to
\begin{equation}
  \label{eq:rearrange1}
    \mathcal R_{p,q}[\bm C] = \begin{bmatrix}
    \text{vec}(\bm C_{1, 1}^{p',q'}), \dots, \text{vec}(\bm C_{1, q}^{p',q'}),\dots, \text{vec}(\bm C_{p, 1}^{p',q'}), \ldots,\text{vec}(\bm C_{p,q}^{p',q'})
    \end{bmatrix}',
\end{equation}
When applied to a Kronecker product $\bm A\otimes \bm B$, it holds that
\begin{equation}
  \label{eq:rearrange2}
    \mathcal R_{p,q}[\bm A\otimes \bm B] = \vec(\bm A)[\vec (\bm B)]'.
\end{equation}
In view of \eqref{eq:rearrange1} and \eqref{eq:rearrange2}, we see
that the KPD in \eqref{eq:kpd} corresponds to the SVD of the
rearranged matrix $\mathcal R_{p,q}[\bm C]$, and the conditions imposed on
$\bm A_k$ and $\bm B_k$ are derived from the properties of the
singular vectors.

Here, we note that the rearrangement operator $\mathcal R$ is configuration dependent, which we emphasize by explicitly specifying the dimension of $\bm A_k$ (in this case, $p$ and $q$) in the subscript of $\mathcal R$, see \eqref{eq:rearrange1} and \eqref{eq:rearrange2}. When there is no ambiguity, the subscript of $\mathcal R$ may be omitted for notational simplicity. According to the definition, the mapping $\mathcal R_{p, q}: \mathbb R^{pp'\times qq'} \rightarrow \mathbb R^{pq\times p'q'}$ is an isomorphism since it is linear and bijective. In addition, since the order of elements does not change the Frobenius norm, the mapping $\mathcal R$ is also isometric under Frobenius norm. 

\subsection{Kronecker Product Model}
\label{sec:kpms}

We consider the model where the observed $P\times Q$ matrix $\bm Y$ is a noisy
version of an unknown Kronecker product
\begin{equation}
  \bm Y = \lambda\bm A\otimes \bm B + \dfrac{\sigma}{\sqrt{PQ}} \bm E\label{eq:model-normalized}.
\end{equation}
To resolve the obvious unidentifiability regarding $\bm A$ and
$\bm B$, we require
\begin{equation}
  \label{eq:identi}
  \|\bm A\|_F=\|\bm B\|_F=1, 
\end{equation}
so that $\lambda>0$ indicates the strength of the signal part. Note
that under \eqref{eq:identi}, ${\bm A}$ and ${\bm B}$ are identified
up to a sign change. We assume that the noise matrix $\bm E$ has IID
stand normal entries, and consequently the strength of the noise is
controlled by $\sigma>0$. The dimensions of $\bm A$ and $\bm B$
correspond to the integer factorization of the dimension of $\bm
Y$. For convenience, we assume throughout this article that the
dimension of the observed matrix $\bm Y$ in
(\ref{eq:model-normalized}) is $2^M\times 2^N$ with
$M, N\in \mathbb N$. As a result, the dimension of $\bm A$ must be of
the form $2^{m_0}\times 2^{n_0}$, where $0\leqslant m_0\leqslant M$ and $0\leqslant n_0\leqslant N$,
and the corresponding dimension of $\bm B$ is $2^{m^\dagger_0}\times
2^{n_0^\dagger}$, where $m_0^\dagger = M-m_0$ and $n_0^\dagger = N-n_0$. Therefore, we can simply use the pair $(m_0,n_0)$ to denote the
configuration of the Kronecker product in \eqref{eq:model-normalized}.
An implicit advantage of this assumption lies in the fact that if two
configurations $(m,n)$ and $(m',n')$ are different, then the number of
rows of $\bm A$ under one configurations divides the one under the
other, and similarly for the numbers of columns, and for $\bm B$. For
example, if $m\leqslant m'$, then the number of rows of $\bm A$ under the
former configuration, which is $2^m$, divides the number of rows
$2^{m'}$ under the latter. This fact leads to a more elegant treatment of the theoretical analysis in Section~\ref{sec:theory}.

For image analysis, assuming the dimension
to be powers of 2 seems rather reasonable. On the other hand, for
other applications where the dimension of the observed matrix are not
powers of 2, one can transform the matrix to fulfill the
assumption. For example, one can super-sample the matrix to increase
the dimension to the closest powers of 2, or augment the matrix by
padding zeros. The methodology proposed in this paper can be applied to any integer numbers $P$ and $Q$ with more than two factors.

We will consider two mechanisms for the signal part $\lambda\bm A\otimes \bm B$.

\vspace{0.1in}

\noindent {\bf Deterministic Scheme.} We assume that $\lambda$, $\bm A$ and
$\bm B$ are deterministic, satisfying \eqref{eq:identi}. We define the following signal-to-noise ratio to measure the signal strength
\begin{equation*}
  %\label{eq:snr2}
  \dfrac{\|\lambda \bm A\otimes \bm B\|_F^2}{\mathbb E\|\sigma\bm E/2^{(M+N)/2}\|_F^2} = \frac{\lambda^2}{\sigma^2}. 
\end{equation*}

\noindent {\bf Random Scheme.} Assume that $\lambda$, $\bm A$ and
$\bm B$ are random and independent with $\bm E$. Although $\bm A$ and $\bm B$ are stochastic, we assume that they have been rescaled so that \eqref{eq:identi} is fulfilled. In this case the signal-to-noise ratio is defined as
\begin{equation*}
  %\label{eq:snr1}
  \dfrac{\mathbb E\|\lambda \bm A\otimes \bm B\|_F^2}{\mathbb E\|\sigma\bm E/2^{(M+N)/2}\|_F^2} =\dfrac{\mathbb E\,\lambda^2}{\sigma^2}.%=\dfrac{\lambda_0^2}{\sigma^2}. 
\end{equation*}

\stepcounter{remark}
\noindent {\bf Remark \arabic{remark}.}  
We distinguish between these two schemes to account for the different assumptions on data generating mechanism. In the random scheme, the observed matrix data is assumed to be randomly chosen from a (super-)population of matrices with an ad-hoc prior, which for example can be chosen as the Kronecker product of two independent Gaussian random matrices. Under the random scheme assumption, ill-behaved matrices arise with negligible probabilities under the prior. Similar assumptions have been used in factor analysis and random effects models. The deterministic scheme incorporates arbitrary matrices. Additional assumptions need to be imposed to exclude extreme cases for which the proposed model selection would fail.

\subsection{Estimation with a Known Configuration} 
\label{sec:estimation}

Suppose we want to estimate $\bm A$ and $\bm B$ based on a given configuration $(m,n)$, that is, the dimensions of $\bm A $ and $\bm B$ are 
$2^{m}\times 2^{n}$ and $2^{m^\dagger}\times 2^{n^\dagger}$ respectively. Again we use $m^\dagger = M-m$ and $n^\dagger = N-n$ to ease the notation when $M$ and $N$ are known.  To estimate
$\bm A$ and $\bm B$ in \eqref{eq:model-normalized} from the observed
matrix $\bm Y$, we solve the minimization problem
\begin{equation}
  \label{eq:nkp}
  \min_{\lambda,\bm A,\bm B}\|\bm Y-\lambda\bm A\otimes\bm B\|_F^2, \quad\hbox{subject to }\|\bm A\|_F=\|\bm B\|_F=1.
\end{equation}
Since we have assumed that the noise matrix contains IID standard
normal entries, \eqref{eq:nkp} is also equivalent to the MLE. This
optimization problem has been formulated as the nearest Kronecker
product (NKP) problem in the matrix computation literature
\citep{van1993approximation}, and solved through the SVD after
rearrangement. According to Section~\ref{sec:kpd}, after applying the
rearrangement operator, the cost function in \eqref{eq:nkp} is
equivalent to
\begin{equation*}
  \|\bm Y - \lambda \bm A\otimes \bm B\|_F^2 = \|\mathcal R[\bm Y] - \lambda\vec(\bm A)[\vec(\bm B)]'\|_F^2.
\end{equation*}
We note that the rearrangement operator $\mathcal{R}$ defined in
\eqref{eq:rearrange1} depends on the configuration of the block
matrix, and in the current case, on the configuration $(m,n)$. 
Let $\mathcal R[\bm Y] = \sum_{k=1}^d \lambda_ku_kv_k'$ be the SVD of the rearranged matrix $\mathcal R_{m, n}[\bm Y]$, where
$\lambda_1\geqslant \dots \geqslant \lambda_d \geqslant 0$ are the singular
values in decreasing order, $u_k$ and $v_k$ are the corresponding left
and right singular vectors and $d = 2^{m + n}\wedge 2^{m^\dagger + n^\dagger}$. The
estimators for model (\ref{eq:model-normalized}) are given by
\begin{equation}
   \hat\lambda = \lambda_1 = \|\mathcal R[\bm Y]\|_S,\quad \hat {\bm A} = \vec^{-1}(u_1), \quad \hat {\bm B} = \vec^{-1}(v_1),\quad \hat\sigma^2 = \|\bm Y\|_F^2 - \hat\lambda^2,\label{eq:nkp-est}
\end{equation}
where $\vec^{-1}$ is the inverse operation of $\vec(\cdot)$ that
restores a vector back into a matrix of proper dimensions.

We exam a few special cases of the configuration $(m,n)$. When
$(m, n) = (0, 0)$ or $(m, n) = (M, N)$, the nearest Kronecker
product approximation of $\bm Y$ is always itself. For instance, if
$m=n=0$, the estimators are
$$\hat\lambda = \|\bm Y\|_F,\quad \hat{\bm A}=1,\quad \hat{\bm B} = \hat\lambda^{-1}\bm Y,\quad \hat\sigma^2=0.$$
These two configurations are obviously over-fitting, and we shall
exclude them in the subsequent analysis.

When $(m, n) = (0,N)$ or $(m,n)=(M,0)$, the nearest Kronecker product
approximation of $\bm Y$ is the same as the rank-1 approximation of
$\bm Y$ without rearrangement. When the true configuration used to
generate $\bm Y$ is chosen, that is $(m, n)=(m_0, n_0)$, the problem is equivalent to denoising a perturbed
rank-1 matrix, since
\begin{equation}
    \mathcal R_{m_0,n_0}[\bm Y] = \lambda \vec(\bm A)\vec(\bm B)' + \dfrac{\sigma}{2^{(M+N)/2}} \mathcal R_{m_0,n_0}[\bm E], \label{eq:rearrange-true}
\end{equation}
where the rearranged noise matrix $\mathcal R_{m_0,n_0}[\bm E]$ is still a standard Gaussian ensemble. Therefore $\lambda$, $\bm A$ and $\bm B$ can
be recovered consistently when
$\sigma \|\mathcal R_{m_0,n_0}[\bm E]\|_S =o_p( \lambda\, 2^{(M+N)/2})$. Details will be
discussed in Section~\ref{sec:theory}.

\section{Configuration Determination through an Information Criterion}\label{sec:estimation-and-config}

Our primary goal is to recover the Kronecker product
$\lambda\bm A\otimes \bm B$ from $\bm Y$, based on model
\eqref{eq:model-normalized}.
It depends on the configuration of the Kronecker
product, which is typically unknown. We propose to use the information
criterion based procedure to select the configuration.

Recall that the dimension of $\bm Y$ is
$2^M\times 2^N$. If the dimension of $\bm A$ is $2^m\times 2^n$, then
the dimension of $\bm B$ must be $2^{m^\dagger}\times 2^{n^\dagger}$, where $m^\dagger=M-m$ and $n^\dagger=N-n$. Therefore,
the configuration can be indexed by the pair $(m,n)$, which takes
value from the Cartesian product set
$\{0,\dots, M\}\times\{0, \dots, N\}$.

For any given configuration $(m,n)$, the estimation procedure in
Section~\ref{sec:estimation} leads to the corresponding estimators
$\hat\lambda$, $\hat{\bm A}$ and $\hat{\bm B}$. Denote the estimated
Kronecker product by
$\hat {\bm Y}^{(m, n)} = \hat\lambda \hat{\bm A}\otimes \hat{\bm B}$.
Note that all of $\hat\lambda$, $\hat{\bm A}$ and $\hat{\bm B}$ depend
implicitly on the configuration $(m, n)$ used in estimation, and
should be written as $\hat\lambda=\hat\lambda^{(m,n)}$ etc. However, we
will suppress the configuration index from the notation for
simplicity, whenever its meaning is clear in the context. Under the
assumption that the noise matrix $\bm E$ is a standard Gaussian
ensemble, we define the information criterion as
\begin{equation}
    \mathrm{IC}_\kappa(m, n) = 2^{M+N}\ln {\|\bm Y - \hat {\bm Y}^{(m, n)}\|_F^2} + \kappa\eta, \label{eq:ic}
\end{equation}
where $\eta = 2^{m+n}+2^{m^\dagger + n^\dagger}$
is the number of parameters involved in the Kronecker product of the
configuration $(m, n)$, and $\kappa \geqslant 0$ controls the penalty on the
model complexity. The information criterion \eqref{eq:ic} can be
viewed as an extended version of the BIC. Similar proposals have been introduced by
\cite{chen2008extended} and \cite{foygel2010extended} in the linear
regression and graphical models setting, respectively.  The
information criterion \eqref{eq:ic} reduces to the log mean square
error when $\kappa=0$, and corresponds to the Akaike information criterion
(AIC) \citep{Akaike1998information} when $\kappa=2$, and the Bayesian
information criterion (BIC) \citep{schwarz1978estimating} when
$\kappa =\ln 2^{M+N} =(M+N)\ln 2$.

\smallskip
\stepcounter{remark}
\noindent {\bf Remark \arabic{remark}.} Strictly speaking, the number of parameters
involved in the Kronecker product $\lambda\bm A\otimes \bm B$ should
be $2^{m+n}+2^{m^\dagger+n^\dagger}-1$ because of the constraints
\eqref{eq:identi}. Since it does not affect the selection procedure to
be introduced in \eqref{eq:ic-selection}, we will use
$\eta = 2^{m+n}+2^{m^\dagger + n^\dagger}$ for simplicity.
\smallskip

The information criterion (\ref{eq:ic}) can be calculated for all
configurations, and the one corresponding to the smallest value of
\eqref{eq:ic} will be selected, based on which the estimation
procedure in Section~\ref{sec:estimation} proceeds. In other words, the selected configuration
$(\hat m,\hat n)$ is obtained through
\begin{equation}
    (\hat m, \hat n) = \argmin_{(m, n)\in\mathcal C}\ \mathrm{IC}_\kappa(m, n),\label{eq:ic-selection}
\end{equation}
where $\mathcal C$ is the set of all candidate configurations.

As discussed in Section~\ref{sec:estimation}, when $m=n = 0$ or
$(m, n)=(M, N)$, it holds that $\hat{\bm Y}=\bm Y$, and the information
criterion \eqref{eq:ic} will be $-\infty$, no matter what value $\kappa$
takes. Therefore, these two configurations should be excluded in model
selection and we use
$$\mathcal C := \{0, \dots, M\}\times \{0, \dots, N\}\setminus \{(0, 0), (M, N)\},$$
as the set of candidate configurations in
(\ref{eq:ic-selection}). Note that the set
$\{0, \dots, M\}\times \{0, \dots, N\}$ forms a rectangle lattice in
$\mathbb Z^2$, and $(m, n) = (0, 0)$ and $(m, n)=(M,N)$ are the bottom left
and top right corner of the lattice. Therefore, we sometimes intuitively refer to
these two configurations as the ``corner cases'' in the sequel. Furthermore, define $\mathcal W$ as the set of all
wrong configurations
$$\mathcal W := \mathcal C \setminus \{(m_0, n_0)\}.$$

We now provide a heuristic argument to show how the selection
procedure (\ref{eq:ic-selection}) is able to select the true
configuration $(m_0, n_0)$. We will leave some technical results aside, and only highlight the essential idea. Precise statements and their rigorous analysis will be presented in Section~\ref{sec:theory}.
For simplicity, assume that $\lambda$, $\sigma$ and $\kappa$ are fixed constants.
Also assume that both $(m_0+n_0)$ and $(m_0^\dagger+n_0^\dagger)$ diverge, so that the number of parameters $\eta_0=2^{m_0+n_0}+2^{m_0^\dagger+n_0^\dagger}$ is of a smaller magnitude than $2^{M+N}$.

According to \eqref{eq:nkp-est}, for a given configuration $(m,n)$,
$\mathcal R_{m, n}[\hat{\bm Y}]$ equals the first SVD component of
$\mathcal R_{m, n}[\bm Y]$, and it follows that
$\|\bm Y - \hat{\bm Y}\|_F^2 = \|\bm Y\|_F^2 - \|\hat{\bm Y}\|_F^2 =
\|\bm Y\|_F^2 - \hat\lambda^2$, and the information criterion
\eqref{eq:ic} can be rewritten as
\begin{equation}
    \mathrm{IC}_\kappa(m, n) = 2^{M+N}\ln (\|\bm Y\|_F^2 - \hat\lambda^2) + \kappa\eta. \label{eq: ic2}
\end{equation}

For the true configuration $(m, n) = (m_0, n_0)$, the rearranged matrix
$\mathcal R_{m_0, n_0}[\bm Y]$ takes the form (\ref{eq:rearrange-true}), where
the first term is a rank-1 matrix of spectral norm
$\lambda$, and the noise term has a spectral norm of the
order $O(2^{-(m_0+n_0)/2}+2^{-(m_0^\dagger+n_0^\dagger)/2})$ (details given in Section \ref{sec:theory}), which is negligible relative to $\lambda$, under the assumption
$m_0+n_0\gg 1, m_0^\dagger + n_0^\dagger \gg 1$. 
So under the true configuration, $\hat\lambda\approx\lambda$.
On the other hand, the number of parameters $\eta_0 = o(2^{M+N})$, making the penalty term much smaller than the log likelihood in (\ref{eq:ic}). To summarize,
\begin{equation*}
  \mathrm{IC}_\kappa(m_0, n_0)\approx 2^{M+N}\ln\left[\|\lambda\bm A\otimes \bm B + \sigma\, 2^{-(M+N)/2}\bm E\|_F^2-\lambda^2\right]
  \approx 2^{M+N}\ln\sigma^2.
\end{equation*}

For a wrong configuration $(m, n)\in\mathcal W$ that is close to the true one, the spectrum norm
$\|\mathcal R_{m, n}[\bm E]\|_S$ and the number of parameters $\eta$ are still negligible. However, the estimated coefficient $\hat\lambda$ is smaller than $\lambda$ since
$$\hat\lambda = \|\mathcal R_{m, n}[\bm Y]\|_S\approx \|\mathcal R_{m, n}[\lambda \bm A\otimes \bm B]\|_S<\lambda. %\|\mathcal R_{m_0, n_0}[\lambda \bm A\otimes \bm B]\|_S = \lambda \approx \lambda_0.
$$
Let us assume that $\|\mathcal R_{m, n}[\lambda \bm A\otimes \bm B]\|_S\leq\phi\lambda$ for some $0<\phi<1$, which implies that for the wrong configuration $(m,n)$, 
\begin{align*}
  \mathrm{IC}_\kappa(m,n) & \approx 2^{M+N}\ln\left[{\|\lambda\bm A\otimes \bm B + \sigma2^{-(M+N)/2}\bm E\|_F^2-\hat\lambda^2}\right]\\
  &\approx 2^{M+N}\ln\left[{\|\sigma 2^{-(M+N)/2}\bm E\|_F^2+\lambda^2-\phi^2\lambda^2}\right]\\
  & \approx 2^{M+N}\ln\left[\sigma^2\left(1+\frac{(1-\phi^2)\lambda^2}{\sigma^2}\right)\right].
\end{align*}
Therefore, the information criterion \eqref{eq:ic} is in favor of the true configuration over a wrong but close-to-truth one, and the two quantities are separated by $$\mathrm{IC}_\kappa(m,n)-\mathrm{IC}_\kappa(m_0,n_0)\approx 2^{M+N}\ln[1+(1-\phi^2)\lambda^2/\sigma^2].$$

On the other hand, for a wrong configuration $(m, n)\in\mathcal W$
that is close to the corner configuration $(0, 0)$ or $(M,N)$, the singular value $\|\mathcal R_{m, n}[\bm E]\|_S$ can be as large as $1/2$, making the separation between $\mathrm{IC}_\kappa(m,n)$ and $\mathrm{IC}_\kappa(m_0,n_0)$ by the log likelihood not guaranteed, i.e. it can happen that $\hat\lambda>\lambda$ under the wrong configuration. But at the same time the number of parameters $\eta$ is also approximately $2^{M + N}$ 
% \rc{(do you want to say 'is close to $2^{M+N}?'$)}
, so $\mathrm{IC}_\kappa(m,n)$ receives a heavy penalty, which once again makes it greater than $\mathrm{IC}_\kappa(m_0,n_0)$. 

In summary, the trade-off between log likelihood and model
complexity plays its role here, as expected. Wrong but close-to-truth
configurations involve similar numbers of parameters as the true one,
but lead to much smaller likelihoods. On the other hand, a close-to-corner
configuration may yield a $\hat{\bm Y}$ closer to the original $\bm Y$, but requires much more
parameters to do so. The true configuration can thus be selected because it reaches the optimal balance between the the likelihood and model complexity.

In the preceding discussion we have assumed many convenient conditions to simplify the arguments and to signify the essential idea. In particular, by assuming that $\lambda$ is a positive constant, the signal strength in model \eqref{eq:model-normalized} is quite strong. In Section~\ref{sec:theory} we will make effort to establish the model selection consistency under minimal conditions.

\section{Theoretical Results}\label{sec:theory}
In this section we provide a theoretical guarantee of
the configuration selection procedure proposed in
Section~\ref{sec:estimation-and-config}, by establishing its asymptotic consistency. Throughout this
section all our discussion will be based on model
\eqref{eq:model-normalized}.

\subsection{Assumptions and Estimation Consistency under Known Configuration}
 
We first introduce the assumptions of the theoretical analysis. Recall that for model \eqref{eq:model-normalized}, $(m_0,n_0)$ denotes the true
configuration, i.e. the matrices $\bm A$ and $\bm B$ are of dimensions
$2^{m_0}\times 2^{n_0}$ and $2^{m_0^\dagger}\times 2^{n_0^\dagger}$ respectively.
For the asymptotic analysis, we make the following
assumption on the sizes of $\bm A$ and $\bm B$,
which follows the paradigm of high dimensional analysis. 

\begin{assump}[Assumption on Dimension]
  \label{assump:hd}
  Consider model \eqref{eq:model-normalized}. As $M+N\rightarrow \infty$, assume that the true
  configuration $(m_0,n_0)$ satisfies
$$\dfrac{m_0 + n_0}{\ln\ln(MN)}\rightarrow\infty,\quad \dfrac{m_0^\dagger + n_0^\dagger}{\ln\ln(MN)} \rightarrow\infty, $$
where $m_0^\dagger = M-m_0$ and $n_0^\dagger = N-n_0$. 
\end{assump}
% \rc{I don't see two conditions here} 
The condition entails that the numbers of entries in $\bm A$ and
$\bm B$ will need to diverge to infinity, and so is that of $\bm
Y$. It is also ensured that the true configuration cannot stay too close to the corners.
We remark that this will be the only condition
on the sizes of the involved matrices. In particular, we do not require all of
$m_0,n_0,m_0^\dagger,n_0^\dagger$ to go to infinity. Consequently, the low rank approximation (when $(m_0,n_0)=(M,0)$ or $(m_0,n_0)=(0,N)$) is also covered by the KoPA framework and our analysis as a special case.

The number of parameters involved in the Kronecker product
$\lambda\bm A\otimes \bm B$ is $\eta_0 = 2^{m_0+n_0} +
2^{m_0^\dagger + n_0^\dagger}$. It is a much smaller number than $2^M\times 2^N$, the number of elements in $\bm Y$. Hence Assumption~\ref{assump:hd} implies a significant dimension reduction.

We also make the following assumption on the error matrix $\bm E$.
\begin{assump}[Assumption on Noise]
  \label{assump:Emat}
  Consider model \eqref{eq:model-normalized}. Assume that $\bm E$
  is a standard Gaussian ensemble, i.e. with IID standard normal
  entries. 
\end{assump}

We conclude this subsection with the convergence rates of the estimators $\hat\lambda$
,$\hat{\bm A}$ and $\hat{\bm B}$, given by the estimation procedure in
Section \ref{sec:estimation} under the true configuration. Since the
error matrix $\bm E$ has IID standard normal entries, according to \cite{Vershynin2010Introduction}, the expectation
of the largest singular value of the rearranged error matrix
$\mathcal R_{m_0, n_0}[\bm E]$ is bounded by
$$s_0 = 2^{(m_0+n_0)/2} + 2^{(m_0^\dagger + n_0^\dagger)/2}.$$

\begin{thm}\label{thm:truth}
  Let $\hat\lambda$, $\hat{\bm A}$ and $\hat{\bm B}$ be the estimators
  obtained under the true configuration, as given in
  (\ref{eq:nkp-est}). Suppose Assumptions \ref{assump:hd} and
  \ref{assump:Emat} hold, then for the deterministic scheme of
  model \eqref{eq:model-normalized}, we have
$$
    \dfrac{\hat\lambda - \lambda}{\lambda} = O_p\left(\frac{r_0}{\lambda/\sigma}\right),\quad
    \|\hat{\bm A}-\bm A\|_F^2 = O_p\left(\frac{r_0}{\lambda/\sigma}\right),\quad
    \|\hat{\bm B}-\bm B\|_F^2 = O_p\left(\frac{r_0}{\lambda/\sigma}\right),
$$
where 
$$r_0=\dfrac{s_0}{2^{(M+N)/2}}=2^{-(m_0+n_0)/2} + 2^{-(m_0^\dagger + n_0^\dagger)/2}.$$
\end{thm}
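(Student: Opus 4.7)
The plan is to reduce the analysis to a classical rank-one signal-plus-noise spectral estimation problem via the rearrangement identity \eqref{eq:rearrange-true}, and then invoke Weyl's inequality for the leading singular value together with Wedin's sin-theta theorem for the leading singular vectors. Applying the rearrangement operator at the true configuration gives
\begin{equation*}
\mathcal R_{m_0,n_0}[\bm Y] \;=\; \lambda\,\vec(\bm A)[\vec(\bm B)]' \;+\; \dfrac{\sigma}{2^{(M+N)/2}}\,\widetilde{\bm E},
\end{equation*}
where $\widetilde{\bm E} := \mathcal R_{m_0,n_0}[\bm E]$ is again a standard Gaussian ensemble (since $\mathcal R$ merely permutes entries), now of size $2^{m_0+n_0}\times 2^{m_0^\dagger + n_0^\dagger}$. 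The normalization $\|\bm A\|_F = \|\bm B\|_F = 1$ makes the signal a rank-one matrix whose unique nonzero singular value is exactly $\lambda$, with left and right singular vectors $\vec(\bm A)$ and $\vec(\bm B)$. The estimators in \eqref{eq:nkp-est} are then the leading singular triple of this perturbed matrix, so matrix perturbation theory applies directly.

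First I would control $\|\widetilde{\bm E}\|_S$. The standard expectation bound \citep{Vershynin2010Introduction} gives $\mathbb{E}\|\widetilde{\bm E}\|_S \leqslant s_0$, and Gaussian concentration for the spectral norm (Borell--TIS) ensures the deviation from the mean is $O_p(\sqrt{\ln(MN)})$. Writing $m_0+n_0 = c(M,N)\ln\ln(MN)$ with $c(M,N)\to\infty$ under Assumption~\ref{assump:hd}, the dominant term of $s_0$ is of order $(\ln MN)^{c(M,N)\ln 2/2}$, which outpaces any fixed power of $\ln(MN)$ and in particular dominates the concentration tail. Hence $\|\widetilde{\bm E}\|_S = O_p(s_0)$, and the effective operator norm of the noise term in the display above is $O_p(\sigma r_0)$ with $r_0 = s_0/2^{(M+N)/2}$.

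The singular value claim then follows immediately from Weyl's inequality, $|\hat\lambda-\lambda| \leqslant \|(\sigma/2^{(M+N)/2})\widetilde{\bm E}\|_S = O_p(\sigma r_0)$, after dividing through by $\lambda$. For the singular vectors, the noise-free signal has a spectral gap of exactly $\lambda$ between its first and second singular values, so Wedin's sin-theta theorem applied to a rank-one target yields $\sin\theta(\hat u, \vec(\bm A)) = O_p(\sigma r_0/\lambda)$ and analogously for $\hat v$ versus $\vec(\bm B)$. Fixing the sign of $\hat{\bm A}$ and $\hat{\bm B}$ consistently with the identifiability convention \eqref{eq:identi} converts this into $\|\hat u - \vec(\bm A)\|^2 \leqslant C(\sigma r_0/\lambda)^2$, and the isometry of $\vec^{-1}$ lifts the bound to $\|\hat{\bm A}-\bm A\|_F^2$ and $\|\hat{\bm B}-\bm B\|_F^2$; this quadratic rate is in fact stronger than, and implies, the stated $O_p(\sigma r_0/\lambda)$ bound in the informative regime $\sigma r_0/\lambda = o(1)$. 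The hard part is the concentration step: verifying that the Gaussian fluctuation is genuinely of smaller order than $s_0$ under Assumption~\ref{assump:hd}'s fairly weak divergence condition. The $\ln\ln(MN)$ threshold is calibrated so that $s_0/\sqrt{\ln(MN)}\to\infty$, and any slower divergence of $(m_0+n_0)$ or $(m_0^\dagger+n_0^\dagger)$ would fail to dominate the sub-Gaussian tail of the spectral norm.
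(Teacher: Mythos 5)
Your proof is correct and follows essentially the same route as the paper's: rearrange to a rank-one-plus-Gaussian-noise problem via \eqref{eq:rearrange-true}, bound $\|\mathcal R_{m_0,n_0}[\bm E]\|_S$ by $s_0+O_p(1)$ using Vershynin's concentration inequality, apply the triangle (Weyl) inequality for $\hat\lambda$, and invoke Wedin's $\sin\Theta$ theorem for the singular vectors. The only cosmetic differences are that the Gaussian fluctuation of the spectral norm is in fact $O_p(1)$ rather than $O_p(\sqrt{\ln(MN)})$ (so only $s_0\to\infty$ is needed at that step, not the full $\ln\ln$ calibration you describe), and your observation that the singular-vector bound is really quadratic in $\sigma r_0/\lambda$ matches the paper's remark that sharper rates are available.
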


\subsection{Consistency of Configuration Selection}
\label{sec:deterministic}
To study the consistency of the configuration selection proposed in Section \ref{sec:estimation-and-config}, we need assumptions on the signal-to-noise ratio. We choose to present model \eqref{eq:model-normalized} with both $\lambda$ and $\sigma$ so that it is able to account for any actual data generating mechanism. On the other hand, the mathematical properties would only depend on the ratio $\lambda/\sigma$.
The strength of the signal also depends on the contrast between true and wrong configurations. If a configuration $(m, n)\in\mathcal W$ is used for the estimation, $\bm Y$ is rearranged as
\begin{equation}
    \mathcal R_{m, n}[\bm Y] = \lambda \mathcal R_{m,n}[\bm A\otimes \bm B] + \sigma 2^{-(M+N)/2}\mathcal R_{m, n}[\bm E]. \label{eq:wrong-rearrange}
\end{equation} 
Ignoring the noise term, only the first singular value component of $\mathcal R_{m,n}[\bm A\otimes \bm B]$ (multiplied by $\lambda$) is expected to enter $\hat{\bm Y}$.
When the true configuration is used, $\mathcal R_{m,n}[\bm A\otimes \bm B]$ is a rank-1 matrix, and its leading singular value equals 1 (recall that we have assumed that $\|\bm A\|_F=\|\bm B\|_F=1$). On the other hand, if a wrong configuration is used, then $\mathcal R_{m,n}[\bm A\otimes \bm B]$ is no longer rank-1, and its leading singular value should be smaller than 1. Define
\begin{equation*}
\phi := \max_{(m, n)\in\mathcal W} \|\mathcal R_{m,n}[\bm A \otimes {\bm B}]\|_S.   
\end{equation*}
The quantity $\phi$ characterize how much of the signal $\bm A\otimes\bm B$ can be captured by a wrong configuration, and it always holds that $0<\phi\leq 1$, so we also introduce $$\psi^2:=1-\phi^2,$$ and call it the {\it representation gap}. Note that $0\leq\psi^2<1$, and the larger $\psi^2$ is, the easier it is to separate true and wrong configurations. The following assumption shows the interplay between the representation gap $\psi^2$ and the signal-to-noise ratio $\lambda/\sigma$.
\begin{assump}[Representation Gap]
\label{assump:gap}
For model \eqref{eq:model-normalized}, assume that $\bm A$ and $\bm B$ are deterministic matrices, and
\begin{equation}
\label{eq:gap1}
\lim_{\asymptotic} \frac{2^{(M+N)/2}}{2^{(m_0+n_0)/2}+2^{(m_0^\dagger+n_0^\dagger)/2}}\cdot(\lambda/\sigma)\cdot\psi = \infty,
\end{equation}
and
\begin{equation}
\label{eq:gap2}
\lim_{\asymptotic} 2^{(M+N)/4}\cdot(\lambda/\sigma)\cdot\psi^2 =\infty.
\end{equation}
\end{assump}
In both \eqref{eq:gap1} and \eqref{eq:gap2}, the signal-to-noise ratio and the representation gap $\psi^2$ can diminish to zero, as long as they do not converge to zero too fast. In this sense, Assumption~\ref{assump:gap} is very flexible by requiring only very week signal strength.

\stepcounter{remark}
\noindent {\bf Remark \arabic{remark}.} We have defined $\phi$ as the maximum over $\mathcal W$, the set of all wrong
configurations. In fact, if we let $\phi_{m,n} := \|\mathcal R_{m,n}[\bm A \otimes {\bm B}]\|_S$, and $\psi^2_{m,n}=1-\phi_{m,n}^2$, then Assumption~\ref{assump:gap} can also be given through $\psi^2_{m,n}$ instead of an uniform lower bound $\psi^2$, leading to a weaker version of the assumption. On the other hand, as will be shown in Section~\ref{sec:random}, if $\bm A$ and $\bm B$ are randomly generated according to the Random Scheme, then indeed all $\psi^2_{m,n}$ are larger than or around $1/2$ with an overwhelming probability. This is suggesting that using the lower bound $\psi^2$ in Assumption~\ref{assump:gap} for the deterministic scheme is still reasonable. Therefore, we do not spell out the detailed version of Assumption~\ref{assump:gap} using $\psi^2_{m,n}$, but present it in the current simple form.

\stepcounter{remark}
\noindent {\bf Remark \arabic{remark}.} Notions similar to the representation gap appear as key parameters in many
other problems. For example, in variable selection of linear regression problems,% with independent and univariance covariates, 
the representation gap would be the smallest absolute non-zero coefficient in the model. In matrix rank determination problems or factor models, the representation gap would be the eigen-gap, or the smallest nonzero singular value.

The following theorem quantifies the separation of the information criterion \eqref{eq:ic} between the true and wrong configurations.
\begin{thm}\label{thm:ic-gap-nonrandom}
  Consider model \eqref{eq:model-normalized}, and assume Assumptions \ref{assump:hd}, \ref{assump:Emat}, \ref{assump:gap}. If
\begin{equation}
    \kappa\geq2\ln 2,\quad \text{and}\quad \kappa = o\left(\dfrac{2^{M+N}\ln(1+(\lambda/\sigma)^2\psi^2)}{2^{m_0+n_0} + 2^{m_0^\dagger+n_0^\dagger}}\right),\label{eq:ic-gap-nonrandom-cond}
\end{equation}
then 
\begin{equation*}
    \min_{(m,n)\in\mathcal W}\mathbb E [\mathrm{IC}_\kappa(m,n)] - \mathbb E[\mathrm{IC}_\kappa(m_0,n_0)] \geq 2^{M+N}\cdot\ln[1+(\lambda/\sigma)^2\psi^2]\cdot(1+o(1)).
\end{equation*}
\end{thm}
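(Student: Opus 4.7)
My plan is to start from the rewritten formula \eqref{eq: ic2} and take expectations, so that
\begin{equation*}
\mathbb E[\mathrm{IC}_\kappa(m,n)] - \mathbb E[\mathrm{IC}_\kappa(m_0,n_0)] = 2^{M+N}\Bigl(\mathbb E\ln(\|\bm Y\|_F^2 - \hat\lambda_{m,n}^2) - \mathbb E\ln(\|\bm Y\|_F^2 - \hat\lambda_{m_0,n_0}^2)\Bigr) + \kappa(\eta - \eta_0).
\end{equation*}
The goal is to bound the first bracket uniformly in $(m,n)\in\mathcal W$, after which the penalty term will be handled using the explicit condition \eqref{eq:ic-gap-nonrandom-cond}.

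I would first control the true-configuration side. Gaussian concentration of $\|\bm Y\|_F^2$ around its mean $\lambda^2+\sigma^2$, combined with Theorem~\ref{thm:truth} which gives $\hat\lambda_{m_0,n_0}^2 = \lambda^2 + O_p(\sigma\lambda r_0)$, shows that the denominator concentrates at $\sigma^2$; a truncation plus Gaussian-tail estimate then transfers this to the expectation, yielding $\mathbb E\ln(\|\bm Y\|_F^2 - \hat\lambda_{m_0,n_0}^2) = \ln\sigma^2 + o(1)$. For a wrong configuration, applying the triangle inequality to the rearranged model \eqref{eq:wrong-rearrange} together with the spectral-norm bound $\mathbb E\|\mathcal R_{m,n}[\bm E]\|_S \leq s_{m,n}$ and its concentration yields
\begin{equation*}
\mathbb E(\|\bm Y\|_F^2 - \hat\lambda_{m,n}^2) \geq \sigma^2 + \lambda^2\psi^2 - \frac{2\lambda\phi\sigma\, s_{m,n}}{2^{(M+N)/2}} - \frac{\sigma^2 s_{m,n}^2}{2^{M+N}} - o(\lambda^2\psi^2 + \sigma^2).
\end{equation*}
I would then split $\mathcal W$ according to whether $\eta = 2^{m+n}+2^{m^\dagger+n^\dagger}$ is small (equivalently, $(m,n)$ lies away from the lattice corners) or of order $2^{M+N}$. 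In the non-corner regime, the signal-to-noise assumptions \eqref{eq:gap1} and \eqref{eq:gap2} are exactly calibrated to absorb the cross and pure-noise correction terms relative to $\lambda^2\psi^2$; a concentration-based approximation then pushes the bound through the logarithm to give $\mathbb E\ln(\|\bm Y\|_F^2 - \hat\lambda_{m,n}^2) \geq \ln(\sigma^2 + \lambda^2\psi^2) + o(1)$, while the penalty slack $|\kappa(\eta-\eta_0)| \leq \kappa\eta_0$ is negligible relative to $2^{M+N}\ln(1+(\lambda/\sigma)^2\psi^2)$ by the second part of \eqref{eq:ic-gap-nonrandom-cond}. In the near-corner regime $\eta \asymp 2^{M+N}$, since Assumption~\ref{assump:hd} forces $\eta_0 = o(\eta)$, the penalty surplus $\kappa(\eta-\eta_0) \geq 2\ln 2 \cdot \eta(1-o(1))$ is itself of order $2^{M+N}$, which majorizes $2^{M+N}\ln(1+(\lambda/\sigma)^2\psi^2)$ when $(\lambda/\sigma)^2\psi^2$ is bounded; when $(\lambda/\sigma)^2\psi^2$ is large, the likelihood term $\lambda^2\psi^2$ in the display above dominates the correction terms and the log-ratio produces a gap of order $\ln((\lambda/\sigma)^2\psi^2) \sim \ln(1+(\lambda/\sigma)^2\psi^2)$, so the likelihood-based argument still applies.

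The main obstacle is controlling the expected logarithm in the near-corner regime, where $\|\bm Y\|_F^2 - \hat\lambda_{m,n}^2$ is a tail singular-value statistic of an extremely rectangular Gaussian matrix and can in principle be arbitrarily close to zero, sending the log to $-\infty$. My fallback is a deterministic truncation: on the rare event that this quantity drops below $e^{-c \cdot 2^{M+N}}$, Gaussian tail bounds render the probability super-exponentially small so the log's contribution to the expectation is $O(1)$; off this event, the log is bounded below uniformly, and any remaining negative contribution is absorbed into the generous $\Omega(2^{M+N})$ penalty surplus guaranteed by the assumption $\kappa \geq 2\ln 2$ together with $\eta \asymp 2^{M+N}$.
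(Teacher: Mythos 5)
Your overall strategy mirrors the paper's: the identity $\|\bm Y-\hat{\bm Y}\|_F^2=\|\bm Y\|_F^2-\hat\lambda^2$, Gaussian concentration of $\|\bm Y\|_F^2$ around $\lambda^2+\sigma^2$, an upper bound on $\hat\lambda^2$ under wrong configurations via spectral norms of the rearranged signal and noise, a corner-versus-interior trade-off between likelihood and penalty, and a truncation device to push the bounds through $\mathbb E\ln(\cdot)$ (the paper packages that last step as Lemma~\ref{lem:expect-log}). The genuine gap is in your bound for wrong configurations. Squaring the plain triangle inequality produces the cross term $2\lambda\phi\sigma s_{m,n}2^{-(M+N)/2}=2\lambda\phi\sigma r_{m,n}$, where $r_{m,n}=2^{-(m+n)/2}+2^{-(m^\dagger+n^\dagger)/2}$. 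Near a corner $r_{m,n}\to 1/\sqrt2$, so this term is of order $\lambda\sigma$; when $\lambda\asymp\sigma$, which Assumption~\ref{assump:gap} permits, it is comparable to $\sigma^2+\lambda^2\psi^2$ and your displayed lower bound on the residual can hit zero (take $\lambda=\sigma$, $\phi^2=\psi^2=1/2$, $r_{m,n}^2=1/2$: then $\sigma^2+\lambda^2\psi^2-\sigma^2r_{m,n}^2-\sqrt2\lambda\phi\sigma=0$). Its logarithm is then $-\infty$, and the $O(2^{M+N})$ penalty surplus cannot rescue a likelihood deficit of the form $2^{M+N}\ln(\text{residual}/\sigma^2)$ that is unbounded below. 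The same defect appears, more mildly, at any configuration with $r_{m,n}\gg 2^{-(M+N)/4}$, because \eqref{eq:gap2} is calibrated to absorb a cross term of size $\lambda\sigma\, 2^{-(M+N)/4}$, not $\lambda\sigma r_{m,n}$.

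The paper closes this hole with Lemma~\ref{lem:spectral-norm-bound}: after rotating so the signal matrix is supported on a square $P\times P$ block with $P=2^{m+n}\wedge 2^{m^\dagger+n^\dagger}$, the cross term involves only $\|\bm E_1\|_S\approx 2\sqrt{P}$ for that sub-block, giving $4\lambda\phi\sigma\,2^{((m+n)\wedge(m^\dagger+n^\dagger))/2}\,2^{-(M+N)/2}\leqslant 4\lambda\sigma\,2^{-(M+N)/4}$ uniformly over $\mathcal W$ --- exactly the quantity that \eqref{eq:gap2} renders negligible relative to $\sigma^2+\lambda^2\psi^2$ times the target gap. You need this refinement (or an equivalent one) for the argument to close. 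Two smaller points: your two-regime split over $\mathcal W$ leaves the intermediate configurations unaddressed, whereas the paper notes that the lower bound is concave in $r_{m,n}^2$ so its minimum over the whole range is attained at the two boundary values $r_{m,n}^2\approx 4\cdot 2^{-(M+N)/2}$ and $r_{m,n}^2\approx 1/2$; and for the true configuration the one-line Jensen bound $\mathbb E\|\bm Y-\hat{\bm Y}^{(m_0,n_0)}\|_F^2\leqslant\mathbb E\|\bm Y-\lambda\bm A\otimes\bm B\|_F^2=\sigma^2$ suffices, while your route through Theorem~\ref{thm:truth} introduces an error $O_p(\lambda\sigma r_0)$ that need not be $o(\sigma^2)$ when $\lambda/\sigma$ diverges.
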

To be precise, we note that for a sequence of numbers $\{a_k\}$, the statement $a_k\geq o(1)$ is understood as $\max\{-a_k,0\}=o(1)$. According to Assumptions~\ref{assump:gap}, $(\lambda/\sigma)^2\psi^2\gg 2^{-(M+N)/2}$, so Theorem~\ref{thm:ic-gap-nonrandom} shows that the separation of the information criterion is at least of the order $2^{(M+N)/2}$.

\stepcounter{remark}
\noindent {\bf Remark \arabic{remark}.} The first condition in \eqref{eq:ic-gap-nonrandom-cond} ensures that the penalty on the number of parameters is large enough to exclude configurations close to $(0, 0)$ and $(M, N)$. The second condition in \eqref{eq:ic-gap-nonrandom-cond} is imposed so that the contribution from the penalty term under the true configuration is dominated by the representation  gap. The exact formula of the difference in expected information criterion is given by \eqref{eq:proof-deic} in Appendix.

Next theorem establishes the consistency of \eqref{eq:ic}. We need to define the symbol $\gtrsim$: for two sequences of positive numbers $\{a_k\}$ and $\{b_k\}$, $a_k\gtrsim b_k$ is defined as $\lim\inf_{k\rightarrow\infty}a_k/b_k>0$.
\begin{thm}\label{thm:consistency_nonrandom}
  Assume the same conditions of Theorem \ref{thm:ic-gap-nonrandom}, then
  $$P\left[\mathrm{IC}_\kappa(m_0, n_0) < \min_{(m,n)\in\mathcal W}\mathrm{IC}_\kappa(m, n)\right]\geqslant 1-\exp\left\{- C2^{M+N}+\ln(MN)\right\},$$ %\longrightarrow 1,$$
  where the constant $C$, depending on $\lambda/\sigma$ and $\psi$, is of order
  $$C(\lambda/\sigma, \psi)\gtrsim (\alpha^{1/3}-1)\wedge \left(\dfrac{\alpha - \alpha^{2/3}}{1+\lambda/\sigma}\right)^2,$$
  with $\alpha = 1+(\lambda/\sigma)^2\psi^2$. In particular, the preceding convergence rate implies the consistency of the configuration selection, i.e.
  \begin{equation}
      \label{eq:consistency}
    \lim_{\asymptotic}P\left[\mathrm{IC}_\kappa(m_0, n_0) < \min_{(m,n)\in\mathcal W}\mathrm{IC}_\kappa(m, n)\right]=1.
  \end{equation}
\end{thm}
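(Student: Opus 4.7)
My approach is to combine the expected-value gap from Theorem~\ref{thm:ic-gap-nonrandom} with Gaussian concentration, transferring separation ``in expectation'' to a high-probability separation ``in sample''. The $\ln(MN)$ in the tail exponent comes from a union bound over the at most $(M+1)(N+1)$ configurations; the rest of the work is to obtain a single-configuration deviation bound of order $\exp(-C\cdot 2^{M+N})$.

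I would first define a good event $\mathcal A$ imposing three simultaneous controls on the Gaussian noise: (i) a chi-square concentration $|\|\bm E\|_F^2/2^{M+N}-1|\leq\epsilon$; (ii) a scalar Gaussian bound $|\langle\bm A\otimes\bm B,\bm E\rangle|\leq t_1$ (valid since $\|\bm A\otimes\bm B\|_F=1$); and (iii) the spectral-norm bound $\|\mathcal R_{m,n}[\bm E]\|_S\leq s_{m,n}+t_2$ uniformly over all configurations, via Borell--TIS applied to the $1$-Lipschitz map $\bm E\mapsto\|\mathcal R_{m,n}[\bm E]\|_S$ together with a union bound. Combining these with the expansion $\|\bm Y\|_F^2=\lambda^2+2\tau\langle\bm A\otimes\bm B,\bm E\rangle+\tau^2\|\bm E\|_F^2$ (where $\tau:=\sigma/2^{(M+N)/2}$) and the sandwich $\lambda-\tau(s_0+t_2)\leq\hat\lambda^{(m_0,n_0)}$, $\hat\lambda^{(m,n)}\leq\lambda\phi+\tau(s_{m,n}+t_2)$ for any $(m,n)\in\mathcal W$, I would obtain deterministic bounds of the form $f(m_0,n_0)\leq\sigma^2\alpha^{1/3}$ and $f(m,n)\geq\sigma^2\alpha^{2/3}$, where $f(m,n):=\|\bm Y\|_F^2-(\hat\lambda^{(m,n)})^2$ and $\alpha:=1+(\lambda/\sigma)^2\psi^2$, under Assumption~\ref{assump:gap}.

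On $\mathcal A$ this produces $2^{M+N}\ln[f(m,n)/f(m_0,n_0)]\geq\tfrac{1}{3}\cdot 2^{M+N}\ln\alpha$, which by the second part of~\eqref{eq:ic-gap-nonrandom-cond} dominates the penalty difference $\kappa(\eta_{m_0,n_0}-\eta_{m,n})$ for all but near-corner configurations. Near-corner wrong configurations, where $\phi$ may be close to $1$ and the representation-gap argument is ineffective, are handled as in the proof of Theorem~\ref{thm:ic-gap-nonrandom} by $\kappa\geq 2\ln 2$: the penalty $\kappa\cdot(2^{m+n}+2^{m^\dagger+n^\dagger})$ alone already exceeds the largest possible fitting gain $2^{M+N}\ln 2$. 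Hence $\mathcal A\subseteq\{\mathrm{IC}_\kappa(m_0,n_0)<\min_{\mathcal W}\mathrm{IC}_\kappa(m,n)\}$, and it remains to bound $P(\mathcal A^c)$.

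The main obstacle is tuning $(\epsilon,t_1,t_2)$ so that the resulting tail probability yields exactly the stated constant $C\gtrsim(\alpha^{1/3}-1)\wedge((\alpha-\alpha^{2/3})/(1+\lambda/\sigma))^2$. The chi-square concentration for $\|\bm E\|_F^2/2^{M+N}$ has the mixed tail $\exp(-c\cdot 2^{M+N}(\epsilon^2\wedge\epsilon))$, whose linear regime contributes the $\alpha^{1/3}-1$ term when $\alpha$ is large; the Borell--TIS analysis of $(\hat\lambda^{(m,n)})^2$, viewed as a $2\hat\lambda\tau$-Lipschitz function of $\bm E$, gives a sub-Gaussian tail with variance proxy of order $\lambda^2\tau^2$, and choosing the deviation budget $\sigma^2(\alpha-\alpha^{2/3})$ yields the quadratic term $((\alpha-\alpha^{2/3})/(1+\lambda/\sigma))^2$, with the $+1$ in the denominator accommodating the weak-signal regime $\lambda/\sigma\lesssim 1$. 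Taking the smaller of the two exponents and adding the union bound completes the probability estimate, from which \eqref{eq:consistency} follows immediately.
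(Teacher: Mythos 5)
Your proposal is correct and follows essentially the same route as the paper: a union bound over the $O(MN)$ configurations, chi-square concentration for $\|\bm E\|_F^2$, a scalar Gaussian bound for the cross term $\langle\bm A\otimes\bm B,\bm E\rangle$, and Lipschitz (Borell--TIS) concentration for $\|\mathcal R_{m,n}[\bm Y]\|_S$, with the residual thresholds $\sigma^2\alpha^{1/3}$ and $\sigma^2\alpha^{2/3}$ corresponding exactly to the paper's split of the expected-IC gap at $\mathrm{EIC}_\kappa(m_0,n_0)+2^{M+N}\ln\alpha/3$, yielding the same two exponents $\alpha^{1/3}-1$ and $((\alpha-\alpha^{2/3})/(1+\lambda/\sigma))^2$. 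The only cosmetic difference is that you package everything into one good event rather than bounding the two deviation probabilities per configuration separately, and your near-corner remark should be stated with the $\lambda^2\psi^2$ contribution included (the penalty and the maximal fitting gain are of exactly the same size $2^{M+N}\ln 2$ when $\kappa=2\ln 2$, so the strict separation there comes from the representation-gap term, as in the paper's bound \eqref{eq:proof-eic-corner}).
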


\stepcounter{remark}
\noindent {\bf Remark \arabic{remark}.} In Assumption~\ref{assump:gap}, we focus on the minimal signal-to-noise ratio and representation gap. On the other hand, if they are large such that $\liminf (\lambda/\sigma)^2\psi^2\geq 1/2$, then the condition $\kappa\geq 2\ln 2$ can be dropped from Theorem~\ref{thm:ic-gap-nonrandom} and Theorem~\ref{thm:consistency_nonrandom}, which will continue to hold if we set $\kappa=0$ in \eqref{eq:ic}. In other words, if the signal strength and the representation gap are sufficiently large, one can simply use mean squared error to select the configuration. Specifically, it requires $\lambda^2\psi^2/\sigma^2 > 1/2$ to enable the use of $\kappa = 0$ in the information criterion.

\subsection{Model Selection under Random Scheme}
\label{sec:random}

In this section we consider the consistency of the model selection under the random scheme \eqref{eq:random}. 
First of all, similar convergence rates as Theorem~\ref{thm:truth} can be obtained under the random scheme.
\begin{corr}\label{corr:truth-random}
  Assume Assumptions \ref{assump:hd} and \ref{assump:Emat}. If $\bm A$ and $\bm B$ are generated according to the random scheme \eqref{eq:random}, then the conclusion of Theorem~\ref{thm:truth} continue to hold.
\end{corr}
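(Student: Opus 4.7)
The approach is to reduce the random-scheme statement to the deterministic-scheme result of Theorem~\ref{thm:truth} by conditioning on $(\lambda,\bm A,\bm B)$. Because $\bm E$ is independent of the signal triple in the random scheme, the conditional law of $\bm Y$ given $(\lambda,\bm A,\bm B)$ is exactly the deterministic-scheme model with those values fixed. Moreover, the conclusions of Theorem~\ref{thm:truth} rest on two sample-pathwise matrix-perturbation inequalities: a Weyl-type bound
\[
|\hat\lambda-\lambda|\;\leq\;\frac{\sigma}{2^{(M+N)/2}}\|\mathcal R_{m_0,n_0}[\bm E]\|_S,
\]
obtained directly from \eqref{eq:rearrange-true}, together with a Wedin $\sin\Theta$ bound on the leading singular vectors of $\mathcal R_{m_0,n_0}[\bm Y]$. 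Neither inequality requires $\lambda$, $\bm A$, or $\bm B$ to be deterministic, so I would simply re-invoke them verbatim in the random setting.

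For the noise-side control, Assumption~\ref{assump:Emat} together with the independence between $\bm E$ and $(\lambda,\bm A,\bm B)$ guarantees that $\mathcal R_{m_0,n_0}[\bm E]$ remains a standard Gaussian ensemble, so by the bound of \cite{Vershynin2010Introduction} and Gaussian concentration of spectral norms, $\|\mathcal R_{m_0,n_0}[\bm E]\|_S = O_p(s_0)$ unconditionally. Plugging this into the Weyl bound above gives $|\hat\lambda-\lambda| = O_p(\sigma r_0)$, which is precisely $(\hat\lambda-\lambda)/\lambda = O_p(r_0/(\lambda/\sigma))$. For the matrix factors, I would decompose on the event $\mathcal G := \{\sigma\|\mathcal R_{m_0,n_0}[\bm E]\|_S/2^{(M+N)/2} < \lambda/2\}$: on $\mathcal G$ the Wedin gap is bounded away from zero and yields
\[
\|\hat{\bm A}-\bm A\|_F^2 \;\lesssim\; \Big(\frac{r_0}{\lambda/\sigma}\Big)^{2}\Big(\frac{\|\mathcal R_{m_0,n_0}[\bm E]\|_S}{s_0}\Big)^{2},
\]
together with the analogous bound for $\bm B$; off $\mathcal G$ one uses the trivial estimate $\|\hat{\bm A}-\bm A\|_F^2 \leq 4$.

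The main obstacle is gluing these two regimes together when the random ratio $\lambda/\sigma$ is not guaranteed to dominate $r_0$. The resolution is exactly why Theorem~\ref{thm:truth} states the loose rate $r_0/(\lambda/\sigma)$ rather than the sharper $(r_0/(\lambda/\sigma))^2$: when $r_0/(\lambda/\sigma)<1$, the Wedin bound on $\mathcal G$ dominates the looser target, while when $r_0/(\lambda/\sigma)\geq 1$ the trivial estimate $\|\hat{\bm A}-\bm A\|_F^2\leq 4\leq 4\,r_0/(\lambda/\sigma)$ already suffices. Since the distribution of $\|\mathcal R_{m_0,n_0}[\bm E]\|_S/s_0$ does not involve $(\lambda,\bm A,\bm B)$, the resulting tail bounds are uniform in the signal realization and can be integrated against the marginal distribution of $(\lambda,\bm A,\bm B)$ to deliver the claimed unconditional $O_p$ rates.
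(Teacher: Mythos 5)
Your proposal is correct and follows essentially the same route as the paper: the paper's own proof of this corollary is the single observation that the triangle-inequality bound on $\hat\lambda$ and the Wedin $\sin\Theta$ analysis hold pathwise for any fixed value of $(\lambda,\bm A,\bm B)$, with the noise tail bound for $\|\mathcal R_{m_0,n_0}[\bm E]\|_S$ unaffected by the conditioning thanks to independence, so the deterministic conclusion transfers immediately. Your additional care in splitting on the event where $r_0/(\lambda/\sigma)\geq 1$ and falling back on the trivial bound $\|\hat{\bm A}-\bm A\|_F^2\leq 4$ is a sensible (and arguably more complete) elaboration of why the stated first-power rate, rather than the sharper squared Wedin rate, is the right target, but it does not change the underlying argument.
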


If a configuration $(m, n)\in\mathcal W$ is used, then the estimation procedure given in Section~\ref{sec:estimation} rearranges $\bm Y$ as \eqref{eq:wrong-rearrange}.
In Section~\ref{sec:deterministic} for the deterministic scheme, we introduce $\phi$ as the upper bound of $\|{\cal R}_{m,n}[{\bm A}\otimes{\bm B}]\|_S$ over all wrong configurations. For the random scheme, it turns out this upper bound and hence the representation gap $\psi$, depending on $\bm A$ and $\bm B$, is also random. We introduce the following “random” version of Assumption~\ref{assump:gap}.
\begin{assump}[Representation Gap]
\label{assump:gap_random}
Assuem in model \eqref{eq:model-normalized}, $\lambda$, $\bm A$ and $\bm B$ are random and independent with $\bm E$. Assume there exist two sequences of positive numbers $\{\lambda_0\}$ and $\{\psi_0\}$ satisfying \eqref{eq:gap1} and \eqref{eq:gap2} (by replacing $\lambda$ and $\psi$ therein), such that
  $$\limsup_{\asymptotic}\mathbb E[\lambda^2/\lambda_0^2] < \infty,\quad \limsup_{\asymptotic}\mathbb E[\psi^2/\psi_0^2] < \infty,$$
  and for any constant $c>0$
  \begin{equation}
      \lim_{\asymptotic} MN\cdot P\left[{\lambda^2}/{\lambda_0^2} <1-c\right]= \lim_{\asymptotic} MN\cdot P\left[{\psi^2}/{\psi_0^2} <1-c\right]=0.\label{eq:gap-random-tail-cond}
  \end{equation}
\end{assump}
With Assumption~\ref{assump:gap_random}, Theorem~\ref{thm:ic-gap-nonrandom} and \ref{thm:consistency_nonrandom} continue to hold under the random scheme, as asserted by the next theorem.
\begin{thm}\label{thm:ic-gap}
  Consider model \eqref{eq:model-normalized} with random $\lambda$, $\bm A$ and $\bm B$. Under Assumptions~\ref{assump:hd}, \ref{assump:Emat} and \ref{assump:gap_random}, it holds that
  \begin{equation*}
    \min_{(m,n)\in\mathcal W}\mathbb E [\mathrm{IC}_\kappa(m,n)] - \mathbb E[\mathrm{IC}_\kappa(m_0,n_0)] \geq 2^{M+N}\cdot\ln[1+(\lambda_0/\sigma)^2\psi_0^2]\cdot(1+o(1)).
\end{equation*}
Furthermore, the consistency \eqref{eq:consistency} holds.
\end{thm}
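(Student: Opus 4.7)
The plan is to reduce the random scheme to the deterministic scheme by conditioning on the realizations of $(\lambda, \bm A, \bm B)$. Since $\bm E$ is independent of these quantities, conditional on them the model has exactly the deterministic form of Section~\ref{sec:deterministic}, so Theorems~\ref{thm:ic-gap-nonrandom} and \ref{thm:consistency_nonrandom} can be invoked conditionally once Assumption~\ref{assump:gap} is verified. Assumption~\ref{assump:gap_random} is engineered precisely so that with overwhelming probability the random $\lambda$ and $\psi$ stay within a constant factor of the deterministic lower bounds $\lambda_0, \psi_0$, which themselves satisfy \eqref{eq:gap1}--\eqref{eq:gap2}.

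Concretely, fix a small $c\in(0,1)$ and define the good event $\mathcal G := \{\lambda^2 \geq (1-c)\lambda_0^2\} \cap \{\psi^2 \geq (1-c)\psi_0^2\}$. By \eqref{eq:gap-random-tail-cond}, $P(\mathcal G^c) = o((MN)^{-1})$. Since \eqref{eq:gap1}--\eqref{eq:gap2} are invariant under multiplication by a positive constant, on $\mathcal G$ the conditional model satisfies Assumption~\ref{assump:gap} with $\sqrt{1-c}\,\lambda_0, \sqrt{1-c}\,\psi_0$ in place of $\lambda, \psi$. Applying Theorem~\ref{thm:ic-gap-nonrandom} conditionally gives the expected-IC-gap lower bound of $2^{M+N}\ln[1+(1-c)(\lambda_0/\sigma)^2\psi_0^2](1+o(1))$ on $\mathcal G$, while Theorem~\ref{thm:consistency_nonrandom} bounds the conditional probability of $\{\mathrm{IC}_\kappa(m_0,n_0)\geq \min_{\mathcal W}\mathrm{IC}_\kappa(m,n)\}$ by $\exp\{-C\,2^{M+N}+\ln(MN)\}$ on $\mathcal G$. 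The consistency \eqref{eq:consistency} follows from the decomposition
$$P\bigl[\mathrm{IC}_\kappa(m_0,n_0) \geq \min_{(m,n)\in\mathcal W}\mathrm{IC}_\kappa(m,n)\bigr] \leq P(\mathcal G^c) + \mathbb E\bigl[P(\cdot\mid\lambda,\bm A,\bm B)\,;\,\mathcal G\bigr],$$
with both terms vanishing. For the unconditional expected IC gap, restrict the outer expectation to $\mathcal G$ to obtain the stated lower bound after sending $c = c_{M+N} \to 0$ slowly enough.

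The main technical obstacle is bounding the bad-event contribution $\mathbb E\bigl[|\mathrm{IC}_\kappa(m,n) - \mathrm{IC}_\kappa(m_0,n_0)|\,;\,\mathcal G^c\bigr]$ and showing it is dominated by the main gap. The penalty term $\kappa(\eta - \eta_0)$ is at most $O(\kappa\,2^{M+N})$, which combined with $P(\mathcal G^c) = o((MN)^{-1})$ is negligible. For the log-likelihood term, use $0 \leq \hat\lambda^2 \leq \|\bm Y\|_F^2$ to bound $\ln(\|\bm Y\|_F^2 - \hat\lambda^2)$ above by $\ln\|\bm Y\|_F^2 \lesssim \ln(\lambda^2 + \sigma^2 2^{-(M+N)}\|\bm E\|_F^2)$, whose moments are controlled by the assumed $\limsup \mathbb E[\lambda^2/\lambda_0^2] < \infty$ together with Gaussian tail bounds on $\|\bm E\|_F^2$. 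A matching lower bound on the residual follows from concentration of $\|\bm E\|_F^2$ around $2^{M+N}$ and the spectral bound $\hat\lambda \leq \lambda + \sigma\,2^{-(M+N)/2}\|\mathcal R[\bm E]\|_S$ that holds uniformly over configurations. Cauchy--Schwarz applied to these moment bounds against the polynomial smallness of $P(\mathcal G^c)$ then yields an $o(2^{M+N})$ bound on the bad-event contribution, which is dominated by the main gap of order at least $2^{(M+N)/2}$ under Assumption~\ref{assump:gap_random}, completing the proof.
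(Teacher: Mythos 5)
Your overall strategy --- conditioning on $(\lambda,\bm A,\bm B)$, defining a good event from the tail condition \eqref{eq:gap-random-tail-cond}, and invoking Theorems~\ref{thm:ic-gap-nonrandom} and \ref{thm:consistency_nonrandom} conditionally --- is exactly the paper's route, and your treatment of the consistency claim (splitting off $P(\mathcal G^c)=o(1)$ and using the conditional exponential bound on $\mathcal G$) is sound and matches the paper's addition of the exception probabilities $(M+1)(N+1)[P(\lambda^2<\lambda_0^2(1-\delta))+P(\psi^2<\psi_0^2(1-\delta))]=o(1)$ to the union bound.

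The genuine gap is in your handling of the expected IC difference on the bad event. You bound $\mathbb E[|\mathrm{IC}_\kappa(m,n)-\mathrm{IC}_\kappa(m_0,n_0)|;\mathcal G^c]$ by Cauchy--Schwarz against $P(\mathcal G^c)=o((MN)^{-1})$ and declare the result, ``$o(2^{M+N})$,'' to be ``dominated by the main gap of order at least $2^{(M+N)/2}$.'' That comparison is false: $o(2^{M+N})$ does not imply $o(2^{(M+N)/2})$. Quantitatively, a crude second-moment bound on the log-likelihood term is of order $2^{2(M+N)}$, so Cauchy--Schwarz yields only $o(2^{M+N}(MN)^{-1/2})$, whereas Assumption~\ref{assump:gap_random} (via \eqref{eq:gap2}) permits $(\lambda_0/\sigma)^2\psi_0^2$ to be barely larger than $2^{-(M+N)/2}$, in which case the target gap $2^{M+N}\ln[1+(\lambda_0/\sigma)^2\psi_0^2]$ is only slightly larger than $2^{(M+N)/2}\ll 2^{M+N}(MN)^{-1/2}$. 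The polynomial smallness of $P(\mathcal G^c)$ is simply not enough to beat an exponentially small relative gap by this route. The paper avoids this by never isolating an additive bad-event term: it writes $\lambda^2=\lambda_0^2(1+o_p(1))$ and $\psi^2=\psi_0^2(1+o_p(1))$, carries the perturbation \emph{multiplicatively} inside the residual $\lambda^2\psi^2+\sigma^2(1-r_{m,n}^2)$ in \eqref{eq:proof-eic-wrong}, and controls the expected logarithm via Lemma~\ref{lem:expect-log} together with the moment conditions $\limsup\mathbb E[\lambda^2/\lambda_0^2]<\infty$ and $\limsup\mathbb E[\psi^2/\psi_0^2]<\infty$; this produces an error of order $O(a_n)$ with $a_n$ proportional to $(\lambda_0/\sigma)^2\psi_0^2/(1+(\lambda_0/\sigma)^2\psi_0^2)$, i.e.\ proportional to the gap itself, which is what makes the $(1+o(1))$ factor legitimate in the weak-signal regime. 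To repair your argument you would need either this multiplicative bookkeeping or a sharper bound showing that $\mathrm{IC}_\kappa(m,n)-\mathrm{IC}_\kappa(m_0,n_0)$ has negative part of order at most $\kappa\eta_0+2^{(M+N)/2}$ on $\mathcal G^c$, neither of which follows from the steps you wrote down.
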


Assumption~\ref{assump:gap_random} is formulated to single out the minimal condition required for the consistency under the random scheme. There is no specific distributional assumptions imposed on $\bm A$ and $\bm B$.
In the rest of this section, we demonstrate that how it can be satisfied under normality. 
\begin{eg}
\label{example:normal}
Consider model \eqref{eq:model-normalized}. Suppose that
\begin{equation}
  \label{eq:random}
  \lambda\bm A\otimes\bm B = \dfrac{\lambda_0\tilde{\bm A}\otimes\tilde{\bm B}}{2^{(M+N)/2}},
\end{equation}
where $\lambda_0$ is deterministic, and $\tilde{\bm A}$ and $\tilde{\bm B}$ are independent, and both
consisting of IID standard normal entries. In order to fulfill the
identifiability condition \eqref{eq:identi}, we let
$\bm A=\tilde{\bm A}/\|\tilde{\bm A}\|_F$,
$\bm B=\tilde{\bm B}/\|\tilde{\bm B}\|_F$, and
$\lambda=\lambda_0\cdot\|\tilde{\bm A}\|_F\cdot\|\tilde{\bm B}\|_F/2^{(M+N)/2}$. Also assume that $\bm A$ and $\bm B$ are both independent with $\bm E$. For this example, the signal-to-noise ratio becomes
\begin{equation*}
  \dfrac{\mathbb E\|\lambda \bm A\otimes \bm B\|_F^2}{\mathbb E\|\sigma\bm E/2^{(M+N)/2}\|_F^2} 
  =\dfrac{\lambda_0^2}{\sigma^2}. 
\end{equation*}
\end{eg}

Recall that $\phi$ is defined as the upper bound of $\|{\cal R}_{m,n}[{\bm A}\otimes{\bm B}]\|_S$ over all wrong configurations. Only when the true configurations $(m_0,n_0)$ is used, the rearrangement ${\cal R}_{m_0,n_0}[{\bm A}\otimes{\bm B}]$ has the simple structure of a rank-1 matrix. Under a wrong configuration ${\cal R}_{m,n}[{\bm A}\otimes{\bm B}]$ no longer takes any special form. Nevertheless, the following lemma characterizes how the spectral norm of ${\cal R}_{m,n}[{\bm A}\otimes{\bm B}]$ depends on further rearrangements of both $\bm A$ and $\bm B$. It is a property of the Kronecker products and the KPD \eqref{eq:kpd}, so we present it in the general form, without referring to any ``true'' configuration.
\begin{lem}\label{lem:further-decomposition}
  Let $\bm A$ be a $2^m\times 2^n$ matrix and $\bm B$ be a
  $2^{m^\dagger}\times 2^{n^\dagger}$ matrix. Then for any
  $m', n'\in\mathbb Z,\ 0\leqslant m'\leqslant M,\ 0\leqslant
  n'\leqslant N$,
  \begin{equation*}
    %\label{eq:lemma1}
    \|\mathcal R_{m',n'}[\bm A\otimes \bm B]\|_S = \|\mathcal R_{m\wedge m', n\wedge n'}[\bm A]\|_S\cdot \|\mathcal R_{(m'-m)_+, (n'-n)_+}[\bm B]\|_S
\end{equation*}
\end{lem}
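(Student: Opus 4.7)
The plan is to reduce the spectral norm on the left-hand side to an SVD computation by applying the KPD to $\bm A$ and $\bm B$ at carefully matched configurations, so that the rearrangement $\mathcal R_{m',n'}$ of their Kronecker product becomes a product of two separate rearrangements. Set $a = m\wedge m'$, $a' = n\wedge n'$, $b = (m-m')_+$, $b' = (n-n')_+$, $c = (m'-m)_+$, $c' = (n'-n)_+$, so that $a+b=m$, $a+c=m'$, $a'+b'=n$, $a'+c'=n'$, and crucially $bc = b'c' = 0$. Apply \eqref{eq:kpd} to $\bm A$ at configuration $(a,a')$ and to $\bm B$ at configuration $(c,c')$:
\[
\bm A = \sum_{k}\lambda_k^{\bm A}\,\bm A_k^{(1)}\otimes \bm A_k^{(2)},\qquad \bm B = \sum_{\ell}\lambda_\ell^{\bm B}\,\bm B_\ell^{(1)}\otimes \bm B_\ell^{(2)},
\]
where the factors $\bm A_k^{(1)},\bm A_k^{(2)},\bm B_\ell^{(1)},\bm B_\ell^{(2)}$ have dimensions $2^{a}\times 2^{a'}$, $2^{b}\times 2^{b'}$, $2^{c}\times 2^{c'}$, $2^{m^\dagger-c}\times 2^{n^\dagger-c'}$, respectively. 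By the SVD--rearrangement correspondence behind KPD, $\lambda_1^{\bm A} = \|\mathcal R_{a,a'}[\bm A]\|_S$ and $\lambda_1^{\bm B} = \|\mathcal R_{c,c'}[\bm B]\|_S$.

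Expanding $\bm A\otimes\bm B$ bilinearly produces a double sum whose summand is $(\bm A_k^{(1)}\otimes \bm A_k^{(2)})\otimes(\bm B_\ell^{(1)}\otimes\bm B_\ell^{(2)})$. The crux of the proof is the algebraic identity
\[
(\bm A_k^{(1)}\otimes \bm A_k^{(2)})\otimes(\bm B_\ell^{(1)}\otimes \bm B_\ell^{(2)}) = (\bm A_k^{(1)}\otimes \bm B_\ell^{(1)})\otimes(\bm A_k^{(2)}\otimes \bm B_\ell^{(2)}),
\]
in which the first tensor factor on the right has the dimensions $2^{m'}\times 2^{n'}$ required by $\mathcal R_{m',n'}$. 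In general, these two $4$-fold Kronecker products differ by a simultaneous row/column permutation swapping two adjacent bit-index blocks of lengths $b,c$ for rows and $b',c'$ for columns; however, $bc = b'c' = 0$ forces at least one block in each pair to be empty, so the permutation is trivial. This is the step I expect to require the most care: I would verify it by tracking which bits of the $M$-bit row index and $N$-bit column index are routed to each of the four factors on both sides.

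Once this identity is in place, applying $\mathcal R_{m',n'}$ term by term via \eqref{eq:rearrange2} yields
\[
\mathcal R_{m',n'}[\bm A\otimes \bm B] = \sum_{k,\ell}\lambda_k^{\bm A}\lambda_\ell^{\bm B}\,\vec(\bm A_k^{(1)}\otimes \bm B_\ell^{(1)})\,[\vec(\bm A_k^{(2)}\otimes \bm B_\ell^{(2)})]'.
\]
The multiplicativity of the Frobenius inner product under Kronecker products, $\langle P_1\otimes Q_1,P_2\otimes Q_2\rangle_F = \langle P_1,P_2\rangle_F\langle Q_1,Q_2\rangle_F$, combined with the orthonormality of the KPD factors, implies that both $\{\vec(\bm A_k^{(1)}\otimes \bm B_\ell^{(1)})\}_{k,\ell}$ and $\{\vec(\bm A_k^{(2)}\otimes \bm B_\ell^{(2)})\}_{k,\ell}$ are orthonormal families. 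Hence the displayed expression is an SVD of $\mathcal R_{m',n'}[\bm A\otimes \bm B]$ up to reordering, its spectral norm equals $\max_{k,\ell}\lambda_k^{\bm A}\lambda_\ell^{\bm B} = \lambda_1^{\bm A}\lambda_1^{\bm B}$, and rewriting this as $\|\mathcal R_{a,a'}[\bm A]\|_S\cdot\|\mathcal R_{c,c'}[\bm B]\|_S$ gives the desired identity.
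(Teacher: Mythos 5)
Your proposal is correct and follows essentially the same route as the paper's proof: KPD of $\bm A$ at configuration $(m\wedge m', n\wedge n')$ and of $\bm B$ at $((m'-m)_+,(n'-n)_+)$, the middle-factor commutation justified by $(m-m')_+(m'-m)_+=(n-n')_+(n'-n)_+=0$, and the orthonormality of the recombined factors yielding the SVD of the rearranged product with leading singular value $\mu_1\nu_1$. The only cosmetic difference is that you justify the commutation via a bit-index permutation argument, whereas the paper observes directly that one of the two middle factors is a scalar or both are vectors.
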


Applying Lemma~\ref{lem:further-decomposition} to Example~\ref{example:normal} leads to the following corollary.
\begin{corr}\label{corr:further-decomp-random}
  For Example~\ref{example:normal}, under Assumption~\ref{assump:hd}, it holds that 
$$\max_{(m,n)\in\mathcal W} \|\mathcal R_{m,n}[\bm A\otimes \bm B]\|_S = \dfrac{1}{\sqrt{2}}+o_p(1).$$
And as a consequence, Assumption~\ref{assump:gap_random} holds with the $\lambda_0$ in \eqref{eq:random} and $\psi_0^2=1/2$.
\end{corr}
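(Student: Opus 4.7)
The plan is to apply Lemma~\ref{lem:further-decomposition} to factor $\|\mathcal R_{m,n}[\bm A\otimes\bm B]\|_S$ into a product of two rearranged spectral norms, and then combine Gaussian random matrix concentration with a short case analysis to pin down the asymptotic maximum.

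For $(m,n)\in\mathcal W$ set $a=m\wedge m_0$, $b=n\wedge n_0$, $c=(m-m_0)_+$, $d=(n-n_0)_+$, $s=a+b$, $u=c+d$, $S_0=m_0+n_0$ and $T_0=m_0^\dagger+n_0^\dagger$. Lemma~\ref{lem:further-decomposition} yields $\|\mathcal R_{m,n}[\bm A\otimes\bm B]\|_S=\|\mathcal R_{a,b}[\bm A]\|_S\cdot\|\mathcal R_{c,d}[\bm B]\|_S$, and because rearrangement is merely an entry permutation, $\mathcal R_{a,b}[\tilde{\bm A}]$ is a $2^s\times 2^{S_0-s}$ iid standard Gaussian ensemble and $\mathcal R_{c,d}[\tilde{\bm B}]$ is a $2^u\times 2^{T_0-u}$ one. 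I would then combine Gordon's bound $\mathbb E\|\mathcal R_{a,b}[\tilde{\bm A}]\|_S\le 2^{s/2}+2^{(S_0-s)/2}$, Gaussian concentration of the $1$-Lipschitz map $\tilde{\bm A}\mapsto\|\mathcal R_{a,b}[\tilde{\bm A}]\|_S$, the Laurent--Massart normalization $\|\tilde{\bm A}\|_F=2^{S_0/2}(1+o_p(1))$, and a union bound over the $O(MN)$ candidate configurations---harmless because Assumption~\ref{assump:hd} forces $2^{S_0}\wedge 2^{T_0}\gg\ln(MN)$---to obtain the uniform estimates
\begin{equation*}
\|\mathcal R_{a,b}[\bm A]\|_S\le 2^{-s/2}+2^{-(S_0-s)/2}+o_p(1),\quad\|\mathcal R_{c,d}[\bm B]\|_S\le 2^{-u/2}+2^{-(T_0-u)/2}+o_p(1).
\end{equation*}

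A four-way case analysis on the signs of $m-m_0$ and $n-n_0$ then identifies the maximum. If $m\ge m_0$ and $n\ge n_0$, then $s=S_0$ makes $\|\mathcal R_{a,b}[\bm A]\|_S=\|\bm A\|_F=1$ exactly, while $u\in\{1,\dots,T_0-1\}$ forces $\|\mathcal R_{c,d}[\bm B]\|_S$ to peak at the endpoints $u=1,T_0-1$ with value $1/\sqrt{2}+o_p(1)$. The symmetric case $m\le m_0$, $n\le n_0$ is identical after swapping the roles of $\bm A$ and $\bm B$. In the two mixed cases both $s\in[1,S_0-1]$ and $u\in[1,T_0-1]$, so each factor is at most $1/\sqrt{2}+o_p(1)$ and the product is at most $1/2+o_p(1)$, strictly below $1/\sqrt{2}$. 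Taking $(m,n)=(m_0+1,n_0)$ (or $(m_0,n_0+1)$ if $m_0=M$) realises the upper bound, completing the first claim.

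For the second assertion I would verify the moment and tail conditions of Assumption~\ref{assump:gap_random} with $\psi_0^2=1/2$. The expectation bounds are immediate: $\mathbb E[\lambda^2/\lambda_0^2]=\mathbb E[\|\tilde{\bm A}\|_F^2/2^{S_0}]\cdot\mathbb E[\|\tilde{\bm B}\|_F^2/2^{T_0}]=1$ by independence, and $\psi^2/\psi_0^2\le 2$ deterministically. The tail on $\lambda^2/\lambda_0^2$ follows from Laurent--Massart at exponential rate $2^{\min(S_0,T_0)}$, easily beating the prefactor $MN$ under Assumption~\ref{assump:hd}. The tail on $\psi^2/\psi_0^2$ reduces to $P[\phi^2>(1+c)/2]=o(1/(MN))$, supplied by the same Gaussian-concentration plus union-bound argument used above. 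The principal technical hurdle will be the uniform spectral norm control with enough precision to separate $1/\sqrt{2}$ from nearby values across all $O(MN)$ configurations; once that uniform bound is in hand, the case analysis and the moment/tail checks are routine.
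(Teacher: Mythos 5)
Your proposal follows essentially the same route as the paper's proof: apply Lemma~\ref{lem:further-decomposition} to factor $\|\mathcal R_{m,n}[\bm A\otimes\bm B]\|_S$ into $\|\mathcal R_{m\wedge m_0,n\wedge n_0}[\bm A]\|_S\cdot\|\mathcal R_{(m-m_0)_+,(n-n_0)_+}[\bm B]\|_S$, control each factor by the Gaussian spectral-norm bound $\sqrt P+\sqrt Q+O_p(1)$ together with $\|\tilde{\bm A}\|_F\|\tilde{\bm B}\|_F=2^{(M+N)/2}(1+O_p(r_0))$, and then locate the maximizing configurations. The only organizational difference is that the paper multiplies the two bounds out into four explicit terms, $2^{-(m+n)/2}+2^{-(M+N-m-n)/2}+2^{-(|m-m_0|+|n-n_0|)/2}+2^{-(M+N-|m-m_0|-|n-n_0|)/2}$, and maximizes that sum over $\mathcal W$, whereas you bound each factor separately via a sign-based case analysis. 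You are also more explicit than the paper about the union bound over the $O(MN)$ configurations and about verifying the moment and tail conditions of Assumption~\ref{assump:gap_random}; the paper leaves that last step implicit.

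One step in your case analysis is not correct as stated. In the mixed cases you claim $s\in[1,S_0-1]$ and $u\in[1,T_0-1]$, so that each factor is at most $1/\sqrt2+o_p(1)$ and the product at most $1/2+o_p(1)$. This fails when the true configuration lies on the boundary of the lattice, which Assumption~\ref{assump:hd} explicitly permits (the paper stresses that only $m_0+n_0$ and $m_0^\dagger+n_0^\dagger$ need diverge, so e.g.\ $m_0=0$ is allowed). For instance, with $(m_0,n_0)=(0,n_0)$ and a wrong configuration $(m,0)$, $m\geq 1$, one has $s=0$, the first factor is $\|\mathcal R_{0,0}[\bm A]\|_S=\|\bm A\|_F=1$, and the product can reach $1/\sqrt2+o_p(1)$ rather than $1/2+o_p(1)$; the final bound of the corollary survives here only because the second factor compensates, not for the reason you give. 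In the fully degenerate situation where the true configuration is $(0,N)$ and the wrong one is $(M,0)$ (or vice versa), both factors equal one and the spectral norm is exactly $1$, not $1/\sqrt2+o_p(1)$ --- these two configurations yield identical rank-one fits of $\bm Y$, so no selection procedure can separate them. This last degeneracy is shared by the paper's own proof (its fourth term $2^{-(M+N-|m-m_0|-|n-n_0|)/2}$ equals one there), so it is not a defect unique to your argument, but your mixed-case reasoning should be repaired to handle boundary true configurations rather than asserting the ranges of $s$ and $u$ unconditionally.
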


\section{Multi-term Kronecker Product Models}
\label{sec:extension}

In this section, we extend the one-term Kronecker product model in
(\ref{eq:model-normalized}) to the following $K$-term Kronecker
product model.
\begin{equation}
    \bm Y = \sum_{k=1}^K\lambda_k\bm A_k\otimes \bm B_k +\dfrac{\sigma}{2^{(M+N)/2}} \bm E,\label{eq:model-multi-term}
\end{equation}
where $\lambda_1\geqslant \lambda_2\geqslant \cdots \geqslant \lambda_K > 0$ and
$\bm A_k\in\mathbb R^{2^{m_0}\times 2^{n_0}}$,
$\bm B_k\in\mathbb R^{2^{m_0^\dagger}\times 2^{n_0^\dagger}}$, $k=1,\cdots, K$ satisfy the
following orthonormal condition:
$$\tr(\bm A_k\bm A_l') = \tr(\bm B_k\bm B_l')=\delta_{kl}:=\begin{cases}
1&\text{if }k=l,\\
0&\text{if }k\neq l.
\end{cases}$$ 
The orthonormal condition implies the identifiability:
if $\lambda_1>\lambda_2>\cdots >\lambda_k >0$, then $\bm A_k$ and $\bm B_k$ are
identified up to a sign change, see Section~\ref{sec:kpd}. Note that
the $K$ terms in model (\ref{eq:model-multi-term}) have the same configuration $(m_0, n_0)$. Therefore, although multiple terms are present, there is only one configuration to be determined. 

Once the configuration is given, the rearranged $\bm Y$ becomes the sum of a rank $K$ matrix and a noise matrix. The determination of $K$ turns into the rank selection problem in low rank approximation, and existing methods \citep{bai2003inferential,ahn2013eigenvalue} can be applied. Therefore, we focus on the choice of the configuration for model \eqref{eq:model-multi-term}. We propose to use the same procedure based on the one-term model, although there are actually $K$ terms in model \eqref{eq:model-multi-term}. We show that, if the leading term in \eqref{eq:model-multi-term} is strong enough, i.e. if $\lambda_1$ is large enough, compared with other $\lambda_k$ as well as $\sigma$, the information criterion introduced in Section~\ref{sec:estimation-and-config} will continue to select the true configuration consistently.

For ease of presentation, we only provide the result and analysis of the two-term model
\begin{equation}
    \bm Y = \lambda_1\bm A_1\otimes \bm B_1 + \lambda_2\bm A_2\otimes \bm B_2 +\dfrac{\sigma}{2^{(M+N)/2}} \bm E.\label{eq:model-two-term}
\end{equation}
Similar results can be directly extended to the multi-term model.

We propose to use the same configuration selection procedure in
Section \ref{sec:estimation-and-config}, that is, for any candidate configuration
$(m, n)\in\mathcal C$, although $\bm Y$ is generated from the two-term
model (\ref{eq:model-two-term}), we nonetheless still calculate the
information criterion \eqref{eq:ic} by fitting the one-term Kronecker
product model (\ref{eq:model-normalized}) to $\bm Y$. This approach avoids 
the need of the determination of the number of Kronecker product terms when 
seeking the correct configuration. It allows the separation of the two. In this case,
the estimated $\hat\lambda$ used in the information criterion
(\ref{eq: ic2}) is
\begin{equation}
\hat\lambda = \|\mathcal R_{m, n}[\bm Y]\|_S = \|\lambda_1\mathcal R_{m, n}[\bm A_1\otimes \bm B_2]+\lambda_2\mathcal R_{m, n}[\bm A_2\otimes \bm B_2] +\sigma2^{-(M+N)/2}\mathcal R_{m, n}[\bm E]\|_S.\label{eq:lambda-hat-two-term}
\end{equation}
Note that under the true configuration, we have $\hat\lambda \approx \lambda_1$. To bound $\hat\lambda$ under wrong configurations, we define
\begin{align*}
    \phi_1 = \max_{(m, n)\in\mathcal W} \|\mathcal R_{m,n}[{\bm A}_1\otimes{\bm B}_1]\|_S,\qquad
    \phi_2 = \max_{(m, n)\in\mathcal W} \|\mathcal R_{m,n}[{\bm A}_2\otimes{\bm B}_2]\|_S,
\end{align*}
and the representation gaps
$$\psi_1^2 := 1-\phi_1^2,\qquad \psi_2^2:=1-\phi_2^2.$$

Even though $\vec(\bm A_1)$ and $\vec(\bm A_2)$ are orthogonal
according to the model assumption, the column spaces of
$\mathcal R_{m, n}[\bm A_1\otimes \bm B_1]$ and 
$\mathcal R_{m, n}[\bm A_2\otimes \bm B_2]$ are not necessarily
orthogonal. In the worst case when
$\mathcal R_{m, n}[\bm A_1\otimes \bm B_1]$ and
$\mathcal R_{m, n}[\bm A_2\otimes \bm B_2]$ have the same column
space and the same row space, then $\hat\lambda$ in
(\ref{eq:lambda-hat-two-term}) can be close to
$\lambda_1\phi_1 + \lambda_2\phi_2$, which may exceed
$\lambda_1$. Therefore, we need to bound the distance between the
column (and row) spaces of
$\mathcal R_{m, n}[\bm A_1\otimes \bm B_1]$ and
$\mathcal R_{m, n}[\bm A_2\otimes \bm B_2]$. For this purpose, we
make use of the principal angles between linear subspaces. 
Specifically, if $\bm M_1$ and $\bm M_2$ are two
matrices of the same number of rows, the smallest principal angle between their
column spaces, denote by
$\Theta(\bm M_1, \bm M_2)$, is defined as
$$\cos \Theta(\bm M_1, \bm M_2) = \sup_{u_1\neq 0, u_2\neq 0} \dfrac{u_1'\bm M_1'\bm M_2u_2}{\|\bm M_1u_1\|\|\bm M_2u_2\|}.$$

We first discuss the deterministic scheme, where $\bm A_k$ and
$\bm B_k$ are non-random. In Assumption~\ref{assump:Theta}, $\theta_c$
and $\theta_r$ are lower bounds of the smallest possible principal
angles between the column spaces and the row spaces of the two
rearranged components, respectively.

\begin{assump}\label{assump:Theta}
There exist $0<\xi<1$ such that
$$\max_{(m, n)\in\mathcal W_A}\cos \Theta(\mathcal R_{m, n}[\bm A_1\otimes \bm B_1], \mathcal R_{m, n}[\bm A_2\otimes \bm B_2])\leqslant \xi,$$
and
$$\max_{(m, n)\in\mathcal W_B}\cos \Theta([\mathcal R_{m, n}[\bm A_1\otimes \bm B_1]]', [\mathcal R_{m, n}[\bm A_2\otimes \bm B_2]]')\leqslant \xi,$$
where 
$$\mathcal W_A = \{(m, n)\in\mathcal W: m+n \geqslant m^\dagger + n^\dagger\}, \mathcal W_B = \{(m, n)\in\mathcal W: m+n < m^\dagger + n^\dagger\}.$$
\end{assump}

\stepcounter{remark}
\noindent {\bf Remark \arabic{remark}.} The assumption may look unintuitive at first sight, since it might be thought that the matrices $\bm A_i\otimes \bm B_i$, after the rearrangement under wrong configurations, are in general full rank. This is, however, not true, in view of Lemma~\ref{lem:further-decomposition}, most easily seen when the wrong configuratoin $(m,n)$ is nested with the true one $(m_0,n_0)$ in the sense $m\leq n_0$ and $n\leq n_0$. On the other hand, the conditions in Assumption~\ref{assump:Theta} are given separately over $\mathcal W_A$ and $\mathcal W_B$. In each of them, the matrices involved have more rows than columns, and the condition is on the corresponding column spaces.

The following lemma provides an upper bound of the spectral norm of a sum of two matrices. It utilizes the principal angles between the column and row spaces to make the bound sharper than the one given by the triangular inequality. Assumption~\ref{assump:Theta} enables us to apply Lemma~\ref{lem:norm-of-sum} to bound $\hat\lambda$ in \eqref{eq:lambda-hat-two-term}.
\begin{lem}\label{lem:norm-of-sum}
  Suppose $\bm M_1$ and $\bm M_2$ are two matrices of the same
  dimension. Let $\|\bm M_1\|_S=\mu$, $\|\bm M_2\|_S=\nu$. Denote the
  principle angles between the column spaces and the row spaces as
  $\theta = \Theta(\bm M_1, \bm M_2)$,
  $\eta = \Theta(\bm M_1', \bm M_2')$, respectively. Then
$$\|\bm M_1+\bm M_2\|_S^2\leqslant \Lambda^2(\mu, \nu, \theta, \eta),$$
where
\begin{align*}
    \Lambda^2(\mu, \nu, \theta, \eta)=\left.\dfrac{1}{2}\right[&
    \sqrt{\left(\mu^2+\nu^2 + 2\mu\nu\cos\theta\cos\eta\right)^2-4\mu^2\nu^2\sin^2\theta\sin^2\eta} \\
    &\left.\phantom{\frac{1}{2}}\!\!\!\!\!+\mu^2+\nu^2+2\mu\nu\cos\theta\cos\eta \right].
\end{align*}
\end{lem}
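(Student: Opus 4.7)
The plan is to bound $\|\bm M_1+\bm M_2\|_S$ by reducing to a rank-one surrogate whose spectral norm can be computed in closed form, and then to absorb the principal-angle constraints by a monotonicity argument on the resulting $2\times 2$ eigenvalue.

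Let $u^*,v^*$ be a leading pair of singular vectors of $\bm N:=\bm M_1+\bm M_2$, so that $\tau:=\|\bm N\|_S = (u^*)^T\bm N v^*$. Let $P_i$ and $Q_i$ denote the orthogonal projections onto $\mathrm{col}(\bm M_i)$ and $\mathrm{col}(\bm M_i^T)$ respectively, and set $a_i:=\|P_i u^*\|$ and $b_i:=\|Q_i v^*\|$. Because $\bm M_i v^*\in\mathrm{col}(\bm M_i)$ and $\bm M_i^T u^*\in\mathrm{col}(\bm M_i^T)$, one may insert these projections freely: $(u^*)^T\bm M_i v^* = (P_i u^*)^T\bm M_i(Q_i v^*)$. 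Cauchy--Schwarz together with $\|\bm M_i\|_S=\mu_i$ (writing $\mu_1=\mu,\mu_2=\nu$) then yields $|(u^*)^T\bm M_i v^*|\le\mu_i a_i b_i$, so
\[
\tau \le \mu a_1 b_1 + \nu a_2 b_2.
\]

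The next step is to show that the right side is itself bounded by $\Lambda$. I introduce rank-one surrogates $\hat{\bm M}_i := \mu_i\hat u_i\hat v_i^T$ with $\hat u_i := P_i u^*/a_i\in\mathrm{col}(\bm M_i)$ and $\hat v_i := Q_i v^*/b_i\in\mathrm{col}(\bm M_i^T)$ (any unit vectors may be chosen when $a_i b_i=0$, as such terms contribute zero). A direct check gives $(u^*)^T\hat{\bm M}_i v^* = \mu_i a_i b_i$, so $\mu a_1 b_1 + \nu a_2 b_2 = (u^*)^T(\hat{\bm M}_1+\hat{\bm M}_2)v^* \le \|\hat{\bm N}\|_S$, where $\hat{\bm N}:=\hat{\bm M}_1+\hat{\bm M}_2$. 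Factoring $\hat{\bm N} = AB^T$ with $A=[\mu\hat u_1,\nu\hat u_2]$ and $B=[\hat v_1,\hat v_2]$, the nonzero eigenvalues of $\hat{\bm N}^T\hat{\bm N}$ coincide with those of the $2\times 2$ matrix $(A^TA)(B^TB)$, whose trace is $s_0:=\mu^2+\nu^2+2\mu\nu cd$ and whose determinant is $p_0:=\mu^2\nu^2(1-c^2)(1-d^2)$, with $c:=\hat u_1^T\hat u_2$ and $d:=\hat v_1^T\hat v_2$. The quadratic formula then gives $\|\hat{\bm N}\|_S^2=(s_0+\sqrt{s_0^2-4p_0})/2$.

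Finally, since $\hat u_1,\hat u_2$ are unit vectors in $\mathrm{col}(\bm M_1),\mathrm{col}(\bm M_2)$, the definition of the principal angle forces $|c|\le\cos\theta$, and analogously $|d|\le\cos\eta$. Consequently $s_0\le s:=\mu^2+\nu^2+2\mu\nu\cos\theta\cos\eta$ and $p_0\ge p:=\mu^2\nu^2\sin^2\theta\sin^2\eta$. Because the larger root of $\lambda^2-s\lambda+p=0$ is monotone increasing in $s$ and monotone decreasing in $p$, one concludes $\|\hat{\bm N}\|_S^2\le\Lambda^2(\mu,\nu,\theta,\eta)$, which combined with $\tau\le\|\hat{\bm N}\|_S$ gives the desired inequality.

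The main obstacle I anticipate is ensuring that the reduction from the general $\tau$ to the rank-one quantity $\|\hat{\bm N}\|_S$ does not introduce slack: the specific choice $\hat u_i = P_i u^*/a_i$ is precisely what saturates the Cauchy--Schwarz inequality used in the first step, so that the chain $\tau\le\mu a_1 b_1+\nu a_2 b_2\le\|\hat{\bm N}\|_S$ is tight enough to deliver the clean bound $\Lambda^2$. The subsequent $2\times 2$ eigenvalue computation and the joint monotonicity check in $(s,p)$ are routine algebra once the construction of $\hat{\bm M}_i$ is in place.
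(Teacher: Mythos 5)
Your proof is correct, but it follows a genuinely different route from the paper's. The paper bounds $\sup_{\|u\|=1}\|(\bm M_1+\bm M_2)u\|^2$ using only the row-space projections $P_{\bm M_i'}u$: it first proves an auxiliary lemma stating that if $\|P_Uw\|=\cos\alpha$ and the subspaces $U,V$ have principal angle $\eta$, then $\|P_Vw\|\leq\cos(\eta-\alpha)$, and then maximizes the resulting expression $\mu^2\cos^2\alpha+\nu^2\cos^2(\eta-\alpha)+2\mu\nu\cos\theta\cos\alpha\cos(\eta-\alpha)$ over $\alpha$ by a trigonometric double-angle computation. You instead work with the bilinear form $(u^*)'(\bm M_1+\bm M_2)v^*$ at the leading singular pair, project on both sides to get $\tau\leq\mu a_1b_1+\nu a_2b_2$, realize this bound as $(u^*)'\hat{\bm N}v^*$ for a rank-$\leq 2$ surrogate $\hat{\bm N}=AB'$, and read off $\|\hat{\bm N}\|_S^2$ as the larger root of the characteristic polynomial of the $2\times 2$ matrix $(A'A)(B'B)$ with trace $s_0$ and determinant $p_0$, finishing with the joint monotonicity of $(s+\sqrt{s^2-4p})/2$ in $(s,-p)$. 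Your route dispenses with the paper's projection sub-lemma and its somewhat delicate trigonometric optimization, and it makes transparent why the answer has the quadratic-formula shape (it is literally a $2\times 2$ eigenvalue); the paper's route avoids introducing the surrogate matrices and stays entirely within a one-parameter variational problem. Two small points worth making explicit in a final write-up: the step $p_0\geq p$ requires $1-c^2\geq\sin^2\theta$, which follows from $|c|\leq\cos\theta$ (the paper's definition of $\cos\Theta$ takes a supremum over all sign choices, so it does bound $|c|$ and not merely $c$); and the monotonicity argument should note that the discriminant stays nonnegative along the deformation from $(s_0,p_0)$ to $(s,p)$, which holds since $s\geq\mu^2+\nu^2\geq 2\mu\nu\geq 2\sqrt{p}$. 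Neither is a gap, just bookkeeping.
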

Similar to Assumption~\ref{assump:gap}, we assume the signal strengths $\lambda_1$, $\lambda_2$ and the noise level $\sigma$ satisfy the following assumption.
\begin{assump}\label{assump:snr-two-term}
For model \eqref{eq:model-two-term}, we assume that $\lambda_k$ and the matrices ${\bm A}_k$, ${\bm B}_k$, $k=1,2$ are deterministic and 
\begin{equation}
    \lim_{\asymptotic} \dfrac{2^{M+N}}{2^{m+n}+2^{m^\dagger+n^\dagger}}\dfrac{\lambda_1^2\psi_1^2 - \lambda_2^2\phi_2^2 - 2\lambda_1\lambda_2\phi_1\phi_2\xi}{\sigma^2+\lambda_2^2}=\infty \label{eq:two-term-gap1}
\end{equation}
and
\begin{equation}
    \lim_{\asymptotic} 2^{(M+N)/4}\dfrac{\lambda_1^2\psi_1^2 - \lambda_2^2\phi_2^2 - 2\lambda_1\lambda_2\phi_1\phi_2\xi}{(\lambda_1+\lambda_2)\sigma}=\infty.\label{eq:two-term-gap2}
\end{equation}
\end{assump}
The conditions \eqref{eq:two-term-gap1} and \eqref{eq:two-term-gap2} correspond to \eqref{eq:gap1} and \eqref{eq:gap2} in the one-term model. Specifically, when $\lambda_2 = 0$, the two-term model reduces to one-term case, and Assumption~\ref{assump:snr-two-term} reduces to Assumption~\ref{assump:gap} as well. The main result for the two-term model is stated in Theorem~\ref{thm:gap-two-term}.

\begin{thm}\label{thm:gap-two-term}
  Consider the two-term model \eqref{eq:model-two-term}, where $\lambda_k$ and the matrices $\bm A_k$ and $\bm B_k$ are deterministic. Suppose Assumptions~\ref{assump:hd}, \ref{assump:Emat}, \ref{assump:Theta} and \ref{assump:snr-two-term} hold.
If $\kappa$ satisfies
\begin{equation*}
    \kappa \geqslant 2\ln 2\quad \text{and}\quad \kappa=o\left(\dfrac{2^{M+N}\alpha}{2^{m_0+n_0}+2^{M+N-m_0-n_0}}\right),%\label{eq:gap-two-term-cond-q}
\end{equation*}
then 
$$\min_{(m,n)\in\mathcal W} \mathbb E[\mathrm{IC}_\kappa(m, n)] - \mathbb E[\mathrm{IC}_\kappa(m_0, n_0)]\geqslant 2^{M+N}\alpha(1+ o_p(1)),$$
where 
\begin{equation}
\alpha=\ln\left(1+\dfrac{\lambda_1^2\psi_1^2 - \lambda_2^2\phi_2^2 - 2\lambda_1\lambda_2\phi_1\phi_2\xi}{\sigma^2+\lambda_2^2}\right).\label{eq:two-term-alpha}
\end{equation}
Furthermore, the consistency \eqref{eq:consistency} continues to hold.
\end{thm}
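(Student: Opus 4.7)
\textbf{Proof proposal for Theorem~\ref{thm:gap-two-term}.}
My plan is to mimic the proof architecture for the one-term case (Theorem~\ref{thm:ic-gap-nonrandom} and Theorem~\ref{thm:consistency_nonrandom}), but replacing the scalar signal strength $\lambda$ by the leading coefficient $\lambda_1$, and carefully tracking how the second Kronecker term contaminates both the truth-configuration estimate and the wrong-configuration estimate. Recall that for any candidate configuration $(m,n)$ we have $\|\bm Y - \hat{\bm Y}^{(m,n)}\|_F^2 = \|\bm Y\|_F^2 - \hat\lambda^2$, and by Assumption~\ref{assump:Emat} together with the orthonormality of $\bm A_k\otimes\bm B_k$, a direct expansion yields $\|\bm Y\|_F^2 = \lambda_1^2 + \lambda_2^2 + \sigma^2 + o_p(\sigma^2 + \lambda_1^2 + \lambda_2^2)$. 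Hence the key question is to compare $\hat\lambda^2$ at the true configuration against its worst value over $\mathcal W$.

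At the true configuration, the rearranged signal $\mathcal R_{m_0,n_0}[\lambda_1\bm A_1\otimes\bm B_1 + \lambda_2\bm A_2\otimes\bm B_2] = \lambda_1\vec(\bm A_1)\vec(\bm B_1)' + \lambda_2\vec(\bm A_2)\vec(\bm B_2)'$ is already an SVD (by the orthonormality in the definition of KPD), with singular values $\lambda_1\geq\lambda_2$. A standard Weyl/Davis--Kahan perturbation argument applied to $\mathcal R_{m_0,n_0}[\bm Y]$, using the spectral norm bound on $\mathcal R_{m_0,n_0}[\bm E]$ from Theorem~\ref{thm:truth} and Assumptions~\ref{assump:hd}, \ref{assump:snr-two-term}, gives $\hat\lambda^2 = \lambda_1^2 + o_p(\lambda_2^2 + \sigma^2)$, so that $\|\bm Y\|_F^2 - \hat\lambda^2 = \lambda_2^2 + \sigma^2 + o_p(\lambda_2^2+\sigma^2)$.

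At a wrong configuration $(m,n)\in\mathcal W$, I apply Lemma~\ref{lem:norm-of-sum} to the two-term signal with $\mu = \lambda_1\|\mathcal R_{m,n}[\bm A_1\otimes\bm B_1]\|_S \leq \lambda_1\phi_1$, $\nu \leq \lambda_2\phi_2$, and principal angles $\theta,\eta$ between the column/row spaces of the two rearranged terms. Assumption~\ref{assump:Theta} supplies $\cos\theta\le\xi$ over $\mathcal W_A$ and $\cos\eta\le\xi$ over $\mathcal W_B$, so in both cases $\cos\theta\cos\eta \leq \xi$; together with the obvious bound $\sqrt{x^2-y}\leq x$ for $y\geq 0$, Lemma~\ref{lem:norm-of-sum} gives $\|\lambda_1\mathcal R_{m,n}[\bm A_1\otimes\bm B_1]+\lambda_2\mathcal R_{m,n}[\bm A_2\otimes\bm B_2]\|_S^2\leq \lambda_1^2\phi_1^2 + \lambda_2^2\phi_2^2 + 2\lambda_1\lambda_2\phi_1\phi_2\xi$. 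Combining with the noise term $\|\mathcal R_{m,n}[\bm E]\|_S = O_p(s_{m,n})$ via the triangle inequality and the elementary inequality $(a+b)^2 \leq a^2 + b^2 + 2(a\vee b)(a+b)$, I obtain $\hat\lambda^2 \leq \lambda_1^2\phi_1^2+\lambda_2^2\phi_2^2+2\lambda_1\lambda_2\phi_1\phi_2\xi + O_p((\lambda_1+\lambda_2)\sigma\, 2^{-(M+N)/4}s_{m,n}^{1/2} + \sigma^2 s_{m,n}^2 2^{-(M+N)})$, where the error terms are made negligible relative to $\alpha(\lambda_2^2+\sigma^2)$ by \eqref{eq:two-term-gap1}--\eqref{eq:two-term-gap2}. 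Subtracting and taking logs, the loglikelihood difference is $2^{M+N}\ln(1+(\lambda_1^2\psi_1^2-\lambda_2^2\phi_2^2-2\lambda_1\lambda_2\phi_1\phi_2\xi)/(\lambda_2^2+\sigma^2))\cdot(1+o(1)) = 2^{M+N}\alpha(1+o(1))$.

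It remains to absorb the penalty term and close the corner gap. The second half of the condition on $\kappa$ guarantees that $\kappa\eta_0 = o(2^{M+N}\alpha)$, so the penalty at the truth is dominated by the signal separation; the condition $\kappa\geq 2\ln 2$ handles configurations close to $(0,0)$ or $(M,N)$ exactly as in the proof of Theorem~\ref{thm:ic-gap-nonrandom}, since the corner argument there depended only on the penalty beating the total noise budget and did not use the single-term structure in any essential way. This yields the asserted expectation gap. For the almost-sure selection consistency statement, I would replicate the concentration step from Theorem~\ref{thm:consistency_nonrandom}: the fluctuations of $\|\bm Y\|_F^2$ around its mean and of $\|\mathcal R_{m,n}[\bm E]\|_S$ around $s_{m,n}$ are controlled by standard Gaussian concentration, and a union bound over the $O(MN)$ candidate configurations produces the stated exponential tail. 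The main obstacle I anticipate is the bookkeeping in the wrong-configuration bound: keeping the cross-term $2\lambda_1\lambda_2\phi_1\phi_2\xi$ as the single signature of the two-term interaction while showing that every other cross/noise contribution is $o(\alpha(\lambda_2^2+\sigma^2))$ requires careful use of both conditions \eqref{eq:two-term-gap1} and \eqref{eq:two-term-gap2}, the former for bounding the noise spectral norm and the latter for bounding the signal--noise cross term.
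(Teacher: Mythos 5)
Your proposal is correct and follows essentially the same route as the paper: the residual at the true configuration is $\lambda_2^2+\sigma^2$ up to negligible terms, the wrong-configuration signal is bounded via Lemma~\ref{lem:norm-of-sum} together with Assumption~\ref{assump:Theta} by $\lambda_1^2\phi_1^2+\lambda_2^2\phi_2^2+2\lambda_1\lambda_2\phi_1\phi_2\xi$, the noise is added via Lemma~\ref{lem:spectral-norm-bound}, and the penalty/corner and concentration steps are inherited verbatim from Theorems~\ref{thm:ic-gap-nonrandom} and \ref{thm:consistency_nonrandom}. The paper's appendix proof is exactly this reduction, with \eqref{eq:proof-two-term-lambda-hat} replacing \eqref{eq:proof-lambda-hat-bound}.
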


Similar to Theorem~\ref{thm:ic-gap-nonrandom}, we have shown that for the two-term model, the information criterion obtained by fitting a one-term model can still separate the true and wrong configurations with a gap of the order $O(2^{M+N}\alpha)$. 
On the other hand, comparing with Assumption~\ref{assump:gap}, Theorem~\ref{thm:gap-two-term} depends on Assumption~\ref{assump:snr-two-term}, which requires not only the signal-to-noise ratio ($\lambda_1/\sigma$), but also the relative strength of the two terms ($\lambda_1/\lambda_2$) to be large enough. Comparing the two term model \eqref{eq:model-two-term} with the one term model (i.e. $\lambda_2=0$), we note that the information criterion gap $\alpha$ in Theorem~\ref{thm:gap-two-term} is smaller than the one given by Theorem~\ref{thm:ic-gap-nonrandom}.
This phenomenon can be intuitively explained through \eqref{eq:two-term-alpha}. On one hand, $\lambda_2^2$ contributes to the noise term when extracting the first KPD component, since $\lambda_2^2+\sigma^2$ appears in the denominator in \eqref{eq:two-term-alpha}. On the other hand, over-fitting due to the second Kronecker product reduces $\|\bm Y - \hat{\bm Y}\|_F^2$ under the wrong configuration, which is quantified by $\lambda_2^2\phi_2^2+ 2\lambda_1\lambda_2\phi_1\phi_2\xi$ in the numerator of \eqref{eq:two-term-alpha}.

Similar to Example~\ref{example:normal}, we consider the following example of the two term model under normality.
\begin{eg}
\label{example:two-term-normal}
Consider the two term model \eqref{eq:model-two-term}. Suppose that
\begin{equation*}
  %\label{eq:two-term-random}
  \lambda_k\bm A_k\otimes\bm B_k = \lambda_{k0}\tilde{\bm A}_k\otimes\tilde{\bm B}_k / 2^{(M+N)/2}, \quad k=1,2,
\end{equation*}
where all of the five matrices $\tilde{\bm A_k}$ and $\tilde{\bm B_k}$
and $\bm E$ are independent, and each consisting of IID standard
normal entries. To translate it back into the form of
\eqref{eq:model-two-term}, we let
$\bm A_k=\tilde{\bm A}_k/\|\tilde{\bm A}_k\|_F$,
$\bm B_k=\tilde{\bm B}_k/\|\tilde{\bm B}_k\|_F$, and
$\lambda_k=\lambda_{k0}\cdot\|\tilde{\bm A}_k\|_F\cdot\|\tilde{\bm
  B}_k\|_F/2^{(M+N)/2}$.
\end{eg}

For Example~\ref{example:two-term-normal}, it turns out that with probabilities tending to one, $\xi$ is close to 0 and the representation gaps $\psi_1^2$ and $\psi_2^2$ are close to $1/2$ (due to Corollary~\ref{corr:further-decomp-random}). As an immediate consequence, Theorem~\ref{thm:gap-two-term} yields a information criterion gap of the size
$$\alpha = \ln\left(1+\dfrac{\lambda_{10}^2-\lambda_{20}^2}{2(\sigma^2+\lambda_{20}^2)}\right).$$
However, by a refined analysis of Assumption~5 under the normality of Example~\ref{example:two-term-normal}, we are able to prove the following improved result.
\begin{corr}\label{corr:two-term-random-scheme}
  Consider Example~\ref{example:two-term-normal}. Under Assumptions~\ref{assump:hd} and \ref{assump:Emat}, Theorem~\ref{thm:gap-two-term} holds with the information criterion gap
  $$\alpha = \ln\left(1+\dfrac{\lambda_{10}^2}{2(\sigma^2+\lambda_{20}^2)}\right).$$
\end{corr}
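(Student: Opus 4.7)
The strategy is to sharpen the way Lemma~\ref{lem:norm-of-sum} is used in the proof of Theorem~\ref{thm:gap-two-term}. In that proof one invokes the crude upper bound $\Lambda^2(\mu,\nu,\theta,\eta)\le \mu^2+\nu^2+2\mu\nu\cos\theta\cos\eta$ together with $\cos\theta,\cos\eta\le\xi$, which after plugging in $\phi_k^2\approx 1/2$ gives $\Lambda^2\approx (\lambda_1^2+\lambda_2^2)/2$ and hence a gap of $\ln(1+(\lambda_{10}^2-\lambda_{20}^2)/(2(\sigma^2+\lambda_{20}^2)))$. Under the normal scheme of Example~\ref{example:two-term-normal}, however, one can show that the principal angles are essentially $\pi/2$, so $\Lambda^2$ collapses to $\tfrac12[|\mu^2-\nu^2|+\mu^2+\nu^2]=\max(\mu^2,\nu^2)$. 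In this regime the contribution of the smaller Kronecker term is \emph{absorbed} by the larger one rather than \emph{added}, and this is precisely the source of the improvement claimed in the statement.

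The first and main step is to establish the uniform near-orthogonality
$$\xi \;=\; \max_{(m,n)\in\mathcal W_A}\!\!\cos\Theta\bigl(\mathcal R_{m,n}[\bm A_1\otimes\bm B_1],\mathcal R_{m,n}[\bm A_2\otimes\bm B_2]\bigr)\vee\max_{(m,n)\in\mathcal W_B}\!\!\cos\Theta\bigl(\cdots\bigr) \;=\; o_p(1).$$
Since the four matrices $\tilde{\bm A}_1,\tilde{\bm A}_2,\tilde{\bm B}_1,\tilde{\bm B}_2$ are mutually independent Gaussian ensembles, the two rearranged matrices $\mathcal R_{m,n}[\bm A_k\otimes\bm B_k]$ are independent and their column/row spaces are, for each fixed wrong configuration, approximately Haar-distributed on appropriate Grassmannians. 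The cosine of the principal angle between two independent such subspaces concentrates around $0$ at rate $\sqrt{r/d}$, where $r$ and $d$ are the (row-dependent) subspace dimension and ambient dimension; by Assumption~\ref{assump:hd} this is $o(1)$, and standard Gaussian concentration gives exponentially small deviation probabilities. A union bound over the $O(MN)$ wrong configurations then produces the uniform $o_p(1)$ statement. This is the technical heart of the argument and the main obstacle, since one must control principal angles for rearrangements of many different shapes simultaneously while keeping the tails strong enough to survive the union bound.

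Once $\xi=o_p(1)$ is in hand, the second step is immediate: applying Lemma~\ref{lem:norm-of-sum} with $\mu=\lambda_1\phi_1$, $\nu=\lambda_2\phi_2$, and $\cos\theta,\cos\eta=o_p(1)$ yields
$$\bigl\|\lambda_1\mathcal R_{m,n}[\bm A_1\otimes\bm B_1]+\lambda_2\mathcal R_{m,n}[\bm A_2\otimes\bm B_2]\bigr\|_S^2 \;\le\; \max(\lambda_1^2\phi_1^2,\lambda_2^2\phi_2^2)+o_p(\lambda_1^2)=\tfrac{\lambda_1^2}{2}\bigl(1+o_p(1)\bigr),$$
using Corollary~\ref{corr:further-decomp-random} for $\phi_k^2\to 1/2$ and the fact that $\lambda_1\ge\lambda_2$. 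Adding the noise term gives $\hat\lambda^2\le\lambda_1^2/2+o_p(\lambda_1^2)+O_p(s_0^2/2^{M+N})$ uniformly over wrong configurations, while under the true configuration $\hat\lambda^2=\lambda_1^2(1+o_p(1))$ from Corollary~\ref{corr:truth-random} applied to the leading KPD term. Orthogonality of the KPD components yields $\|\bm Y\|_F^2=\lambda_1^2+\lambda_2^2+\sigma^2+o_p(1)$.

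In the third step, I feed these into the same bookkeeping as in the proof of Theorem~\ref{thm:gap-two-term}. The ratio of residual sums of squares satisfies
$$\frac{\|\bm Y\|_F^2-\hat\lambda_{\text{wrong}}^2}{\|\bm Y\|_F^2-\hat\lambda_{\text{true}}^2}\;\ge\;\frac{\tfrac{1}{2}\lambda_1^2+\lambda_2^2+\sigma^2}{\lambda_2^2+\sigma^2}\bigl(1+o_p(1)\bigr)\;=\;1+\frac{\lambda_{10}^2}{2(\sigma^2+\lambda_{20}^2)}+o_p(1),$$
and taking logarithms after multiplying by $2^{M+N}$ produces the claimed gap. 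The penalty term $\kappa\eta$ and the concentration of the noise contributions to $\hat\lambda$ are handled exactly as in the deterministic-scheme proof, so the consistency statement follows without further work.
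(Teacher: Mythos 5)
Your proposal identifies the right mechanism, and it is the same one the paper uses: under the Gaussian scheme the principal angles in Lemma~\ref{lem:norm-of-sum} are close to $\pi/2$, so the bound $\Lambda^2$ collapses from roughly $\mu^2+\nu^2$ to $\max(\mu^2,\nu^2)=\lambda_1^2\phi_1^2\approx\lambda_1^2/2$, and the subsequent bookkeeping (residual ratio $1+\lambda_{10}^2/(2(\sigma^2+\lambda_{20}^2))$) is correct. However, your main technical step --- the uniform claim $\xi=o_p(1)$ over \emph{all} wrong configurations --- is false, and this is a genuine gap. The column (or row) spaces of $\mathcal R_{m,n}[\bm A_k\otimes\bm B_k]$ are not in general position relative to their ambient space: by the proof of Lemma~\ref{lem:further-decomposition}, $\mathcal R_{m,n}[\bm A_k\otimes\bm B_k]$ has rank $I\cdot J$, and for configurations far from the truth this rank can equal the ambient dimension of the relevant subspace. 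For instance, with $M=N$ and $m_0=n_0=M/2$, the configuration $(m,n)=(0,N)$ gives a $2^N\times 2^M$ square matrix of full rank $2^{M}$, so both column spaces are all of $\mathbb R^{2^M}$ and $\cos\Theta=1$ deterministically. Your heuristic ``cosine concentrates at rate $\sqrt{r/d}$'' only helps when $r\ll d$, and Assumption~\ref{assump:hd} does not guarantee this away from the true configuration; consequently the union bound you propose cannot go through.

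The paper's proof avoids this by localizing: it applies the refined principal-angle argument only to configurations adjacent to the truth, $|m-m_0|+|n-n_0|=1$, where the ranks are at most $2^{|m-m_0|+|n-n_0|}=2$ while the ambient dimensions $2^{m+n}$ and $2^{m^\dagger+n^\dagger}$ diverge by Assumption~\ref{assump:hd}; there independence of the four Gaussian ensembles does give $\cos\theta,\cos\eta=o_p(1)$ and hence $\|\lambda_1\mathcal R[\bm A_1\otimes\bm B_1]+\lambda_2\mathcal R[\bm A_2\otimes\bm B_2]\|_S^2\leqslant\lambda_1^2/2+O_p(\lambda_1^2 r_0)$. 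For all other wrong configurations the paper asserts that the separation $\Delta\mathrm{EIC}(m,n)$ is already larger, essentially because the per-configuration quantities $\|\mathcal R_{m,n}[\bm A_k\otimes\bm B_k]\|_S$ decay away from the truth (see the bound in the proof of Corollary~\ref{corr:further-decomp-random}) or because the penalty term dominates near the corners, so no principal-angle refinement is needed there. To repair your argument you would need to split $\mathcal W$ in the same way rather than seeking a single uniform bound on $\xi$.
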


\section{Examples}\label{sec:numerical}

We illustrate the performance of the estimation and configuration
selection procedure through simulation studies in
Section~\ref{sec:simu}, and image examples in
Section~\ref{sec:example}.

\subsection{Simulations}
\label{sec:simu}

We design two simulation studies: the first one on the performance of the
estimation procedure introduced in Section~\ref{sec:estimation}, and the second
one on the configuration selection proposed in
Section~\ref{sec:estimation-and-config}. Many implications of the theoretical results in Section~\ref{sec:theory}
surface from the outcomes of the numerical studies.

\subsubsection{Estimation with known configuration}
\label{sec:simu_estimation}

We first consider the performance of the estimators of $\lambda$,
$\bm A$ and $\bm B$ given in \eqref{eq:nkp-est}, when the true
configuration $(m_0,n_0)$ is known. Throughout this subsection the
simulations are based on model \eqref{eq:model-normalized} with
$m_0=5,\,n_0=5,\,M=10,\,N=10$ and $\sigma=1$. 

The model \eqref{eq:model-normalized} after the rearrangement under
the true configuration becomes
$$\mathcal R_{m_0, n_0}[\bm Y] = \lambda \vec(\bm A)\vec(\bm B)' + \sigma 2^{-(M+N)/2}\mathcal R_{m_0, n_0}[\bm E],$$
where $\vec(\bm A)\in\mathbb R^{2^{m_0+n_0}}$, $\vec(\bm B)\in\mathbb R^{2^{m_0^\dagger+n_0^\dagger}}$ are unit vectors.
Without loss of generality, set $\vec(\bm A) = (1, 0, \dots, 0)'$,
$\vec(\bm B) = (1, 0, \dots, 0)'$.
In this experiment, the noise level is fixed at $\sigma=1$, so the
signal-to-noise ratio is controlled by $\lambda$, which takes values
from the set $\{e^1, e^2, \dots, e^{16}\}$. 
For each value of $\lambda$, we
calculate the errors of the corresponding estimators $\hat\lambda$,
$\hat{\bm A}$ and $\hat{\bm B}$ by
$$\ln \left(\dfrac{\hat\lambda}{\lambda} - 1\right)^2\quad\text{and}\quad
\ln\|\hat{\bm A} - \bm A\|_F^2+\ln\|\hat{\bm B} - \bm B\|_F^2
.$$ The errors based on 20 repetitions are reported in Figure
\ref{fig:error_true_config}.
\begin{figure}[!hptb]
    \centering
    \includegraphics[width=0.45\textwidth]{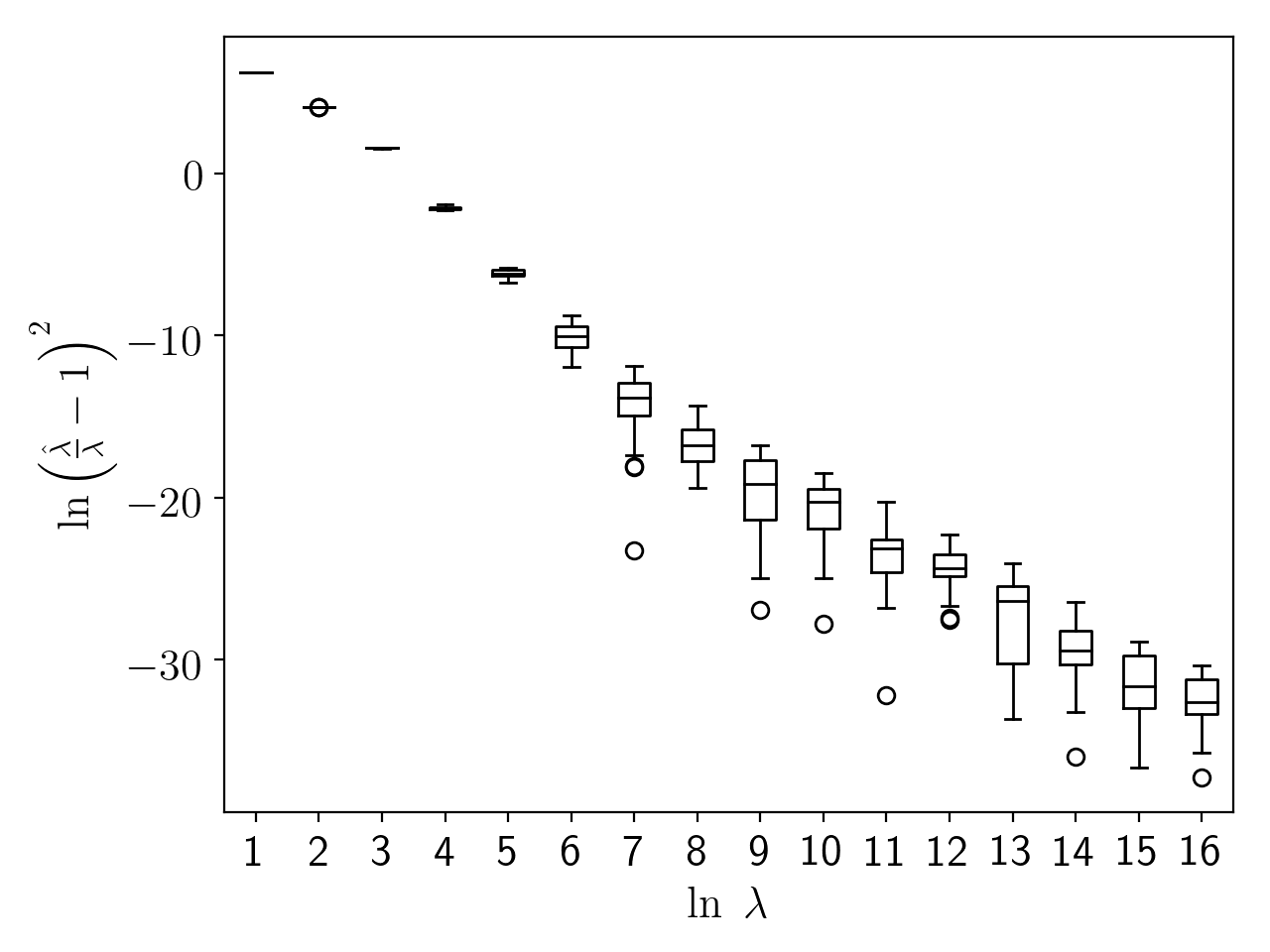}
    \includegraphics[width=0.45\textwidth]{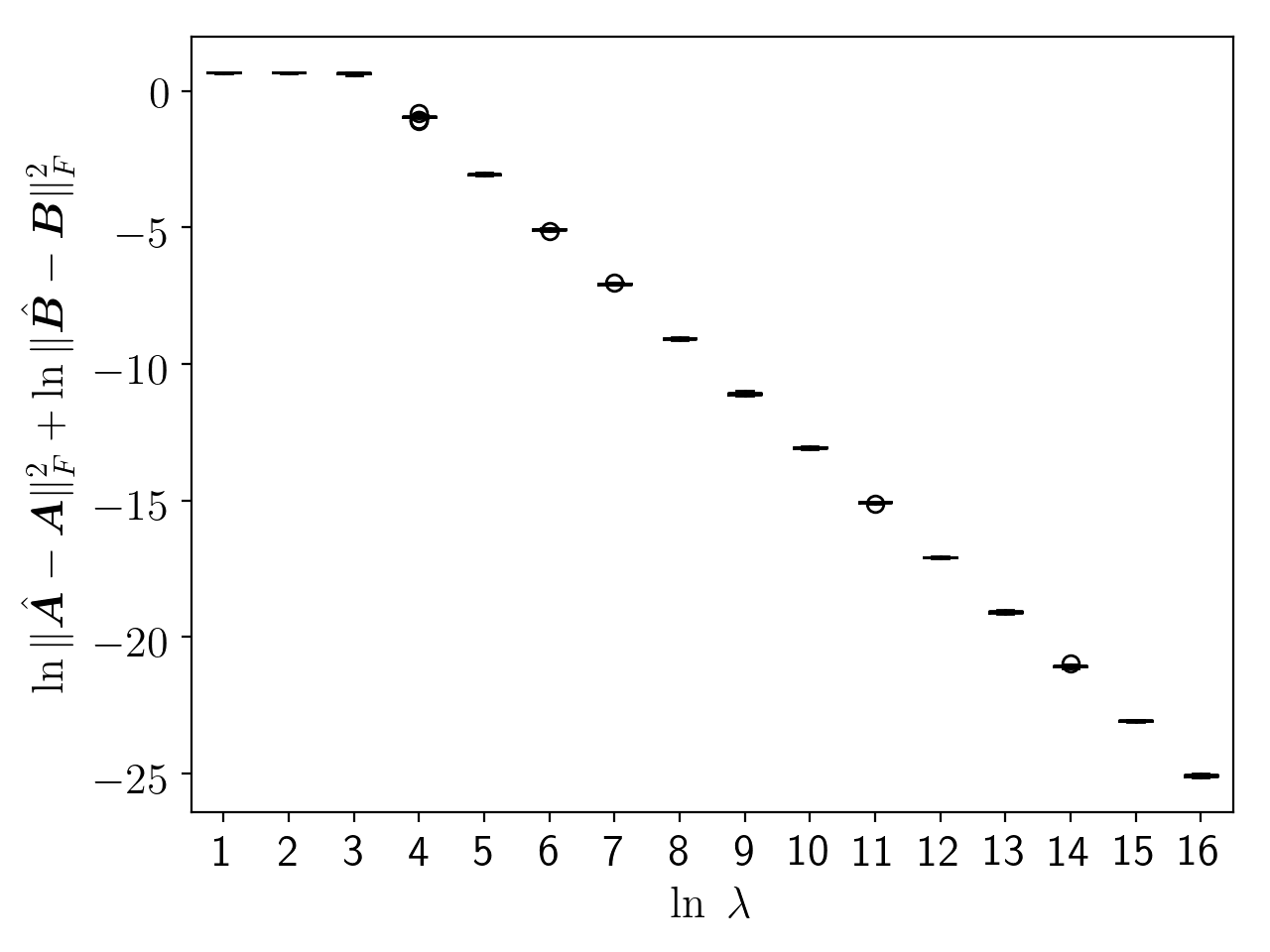}
    \caption{Boxplots for errors in $\hat\lambda$, $\hat{\bm A}$ and
      $\hat{\bm B}$ against the signal-to-noise ratio.}
    \label{fig:error_true_config}
\end{figure}

Figure \ref{fig:error_true_config} displays an interesting linear
pattern, that is, as the signal-to-noise ratio increases,
$\ln \left(\frac{\hat\lambda}{\lambda}-1\right)^2$ is approximately
linear against $\ln \lambda$ with a slope around $-2$, and so is the
error $\ln(\|\hat{\bm A} - \bm A\|_F^2\|\hat{\bm B} - \bm B\|_F^2)$
for the matrix estimators. We note that this pattern is consistent
with Theorem~\ref{thm:truth}, which asserts that
$$\dfrac{\hat\lambda}{\lambda}-1=O_p\left( \dfrac{1}{\lambda}\right)\quad \text{and}\quad \|\hat{\bm A} - \bm A\|_F\|\hat{\bm B} - \bm B\|_F=O_p\left( \dfrac{1}{\lambda}\right),$$
since $r_0$ defined in Theorem~\ref{thm:truth} remains a constant here as we vary the signal strength
$\lambda$ in the simulation.

\subsubsection{Configuration Selection}
\label{sec:simu_config}

We now demonstrate the performance of the information criterion based
procedure for selecting the configuration. Two criteria will be
considered: MSE (when $\kappa=0$) and AIC (when $\kappa=2$). Corresponding to the one- and multi-term models considered in Sections~\ref{sec:theory} and \ref{sec:extension}, we carry
out two experiments respectively.\\[1em]
\noindent\textbf{Experiment 1: One-term KoPA model}\\[1em]
The simulation is based on model \eqref{eq:model-normalized}. Two configurations
are considered: (i) $M=N=9$,
$m_0=4,\ n_0=4$, and (ii) $M=N=10$, $m_0 = 5,\ n_0=4$. Similar to Section~\ref{sec:simu_estimation}, the noise
level is fixed at $\sigma=1$, so the signal-to-noise ratio is
controlled by $\lambda$. To control the representation gap $\psi^2$, 
%we adopt the random scheme \eqref{eq:random} to have $\psi^2\approx 0.5$. For other values of $\psi^2$, 
we construct the matrices $\bm A$ and $\bm B$ as follows:
\begin{align*}
    \bm A & = \sqrt{\varphi^2}\begin{bmatrix}
1\\ 0
\end{bmatrix}\otimes \bm D_1+ \sqrt{1-\varphi^2}\begin{bmatrix}
0\\ 1
\end{bmatrix}\otimes \bm D_2, \\
\bm B & = \sqrt{\varphi^2}\begin{bmatrix}
1\\ 0
\end{bmatrix}\otimes \bm D_3+ \sqrt{1-\varphi^2}\begin{bmatrix}
0\\ 1
\end{bmatrix}\otimes \bm D_4,
\end{align*}
where 
%$\bm D_1, \bm D_2\in\mathbb R^{2^{m_0-1}\times 2^{n_0}}$, and
$\vec(\bm D_i), i=1,2,3,4$ are independent random unit vectors such that $\vec(\bm D_1)$ and $\vec(\bm D_2)$ are orthogonal, and so are $\vec(\bm D_3)$ and $\vec(\bm D_4)$.
%  It can be verified that such pair of $\bm A$ and $\bm B$
% does have the representation gap as $\varphi^2$, which is attained at configurations $(1, 0)$ and $(m_0+1, n_0)$. 
In the experiment, five values of $\varphi^2$ are considered: $\varphi^2\in\{0.1, 0.2, 0.3, 0.4, 0.5\}$. We remark that the construction above controls the representation gaps for configurations $(1, 0)$ and $(m_0+1, n_0)$ at $\varphi^2$ exactly, and the representation gaps for configurations with $m+n\in\{1, M+N-1\}$ (close to trivial configurations) or $|m-m_0|+|n-n_0|=1$ (close to the true configuration) at roughly 0.5. Consequently, when $\varphi^2=0.1, 0.2, 0.3, 0.4$, the overall representation gap $\psi^2$ is at the desired level $\varphi^2$ with high probabilities. But when $\varphi^2=0.5$, the representation gap $\psi^2$ can be slightly smaller than $0.5$.

In Figure~\ref{fig:occurrence_one-term}, we plot the empirical frequencies of the correct configuration selection, out of 100 repetitions, against the signal-to-noise ratio $\lambda/\sigma$. Note that the x-axis scale in Sub-figures~\ref{fig:occurrence_aic_9} and \ref{fig:occurrence_aic_10}
is different from that in \ref{fig:occurrence_mse_9} and \ref{fig:occurrence_mse_10}.  The performances of both MSE ($\kappa = 0$) and AIC ($\kappa = 2$) are illustrated. BIC ($\kappa=(M+N)\ln 2$) has a very similar performance to AIC, and is not reported here. 

%used to obtain the estimates of the true configuration under different values of $\lambda/\sigma$. BIC ($\kappa=(M+N)\ln 2$) has a similar performance to AIC in this experiment, and therefore is not presented. The whole experiment including data generation and configuration selection is repeated 100 times. Within each repetition, we fix $\bm A$, $\bm B$ and $\bm E$ once they are generated and only change the value of $\lambda$.

%Finally, for each value of $\lambda$, we report the empirical frequencies of the correct configuration selection made by both criteria in Figure~\ref{fig:occurrence_one-term}, out of 100 repetitions. By tuning different values of signal-to-noise ratio, we are aiming to capture the phase transition phenomenon of consistent configuration selection.

\begin{figure}[htb]
    \centering
    \begin{subfigure}{.45\textwidth}
    \centering
    \includegraphics[width=.8\linewidth]{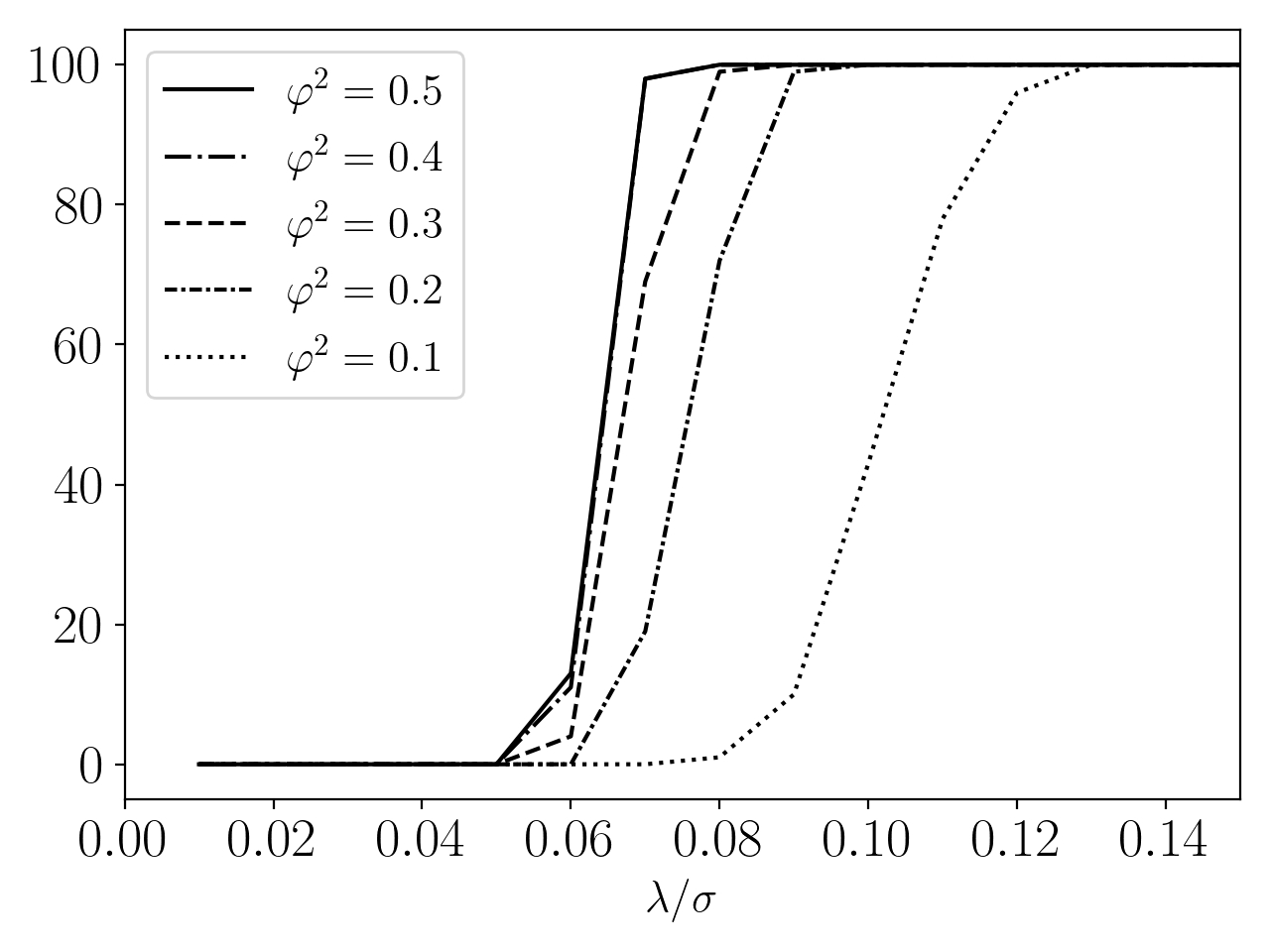}\\[-2mm]  
    \caption{$M=N=9$, AIC}
    \label{fig:occurrence_aic_9}
    \end{subfigure}
    \begin{subfigure}{.45\textwidth}
    \centering
    \includegraphics[width=.8\linewidth]{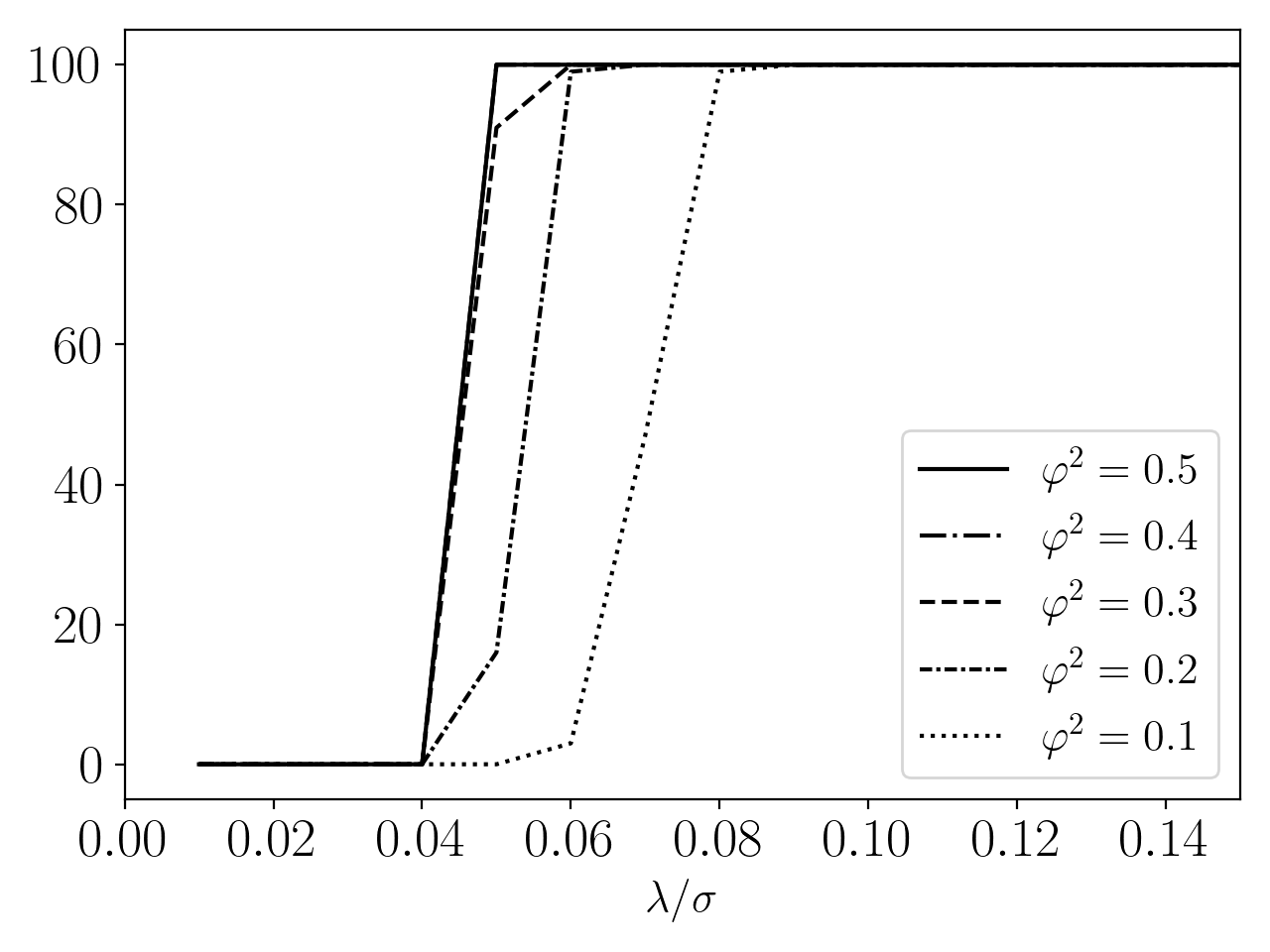} \\[-2mm] 
    \caption{$M=N=10$, AIC}
    \label{fig:occurrence_aic_10}
    \end{subfigure}\\[3mm]
    \begin{subfigure}{.45\textwidth}
    \centering
    \includegraphics[width=.8\linewidth]{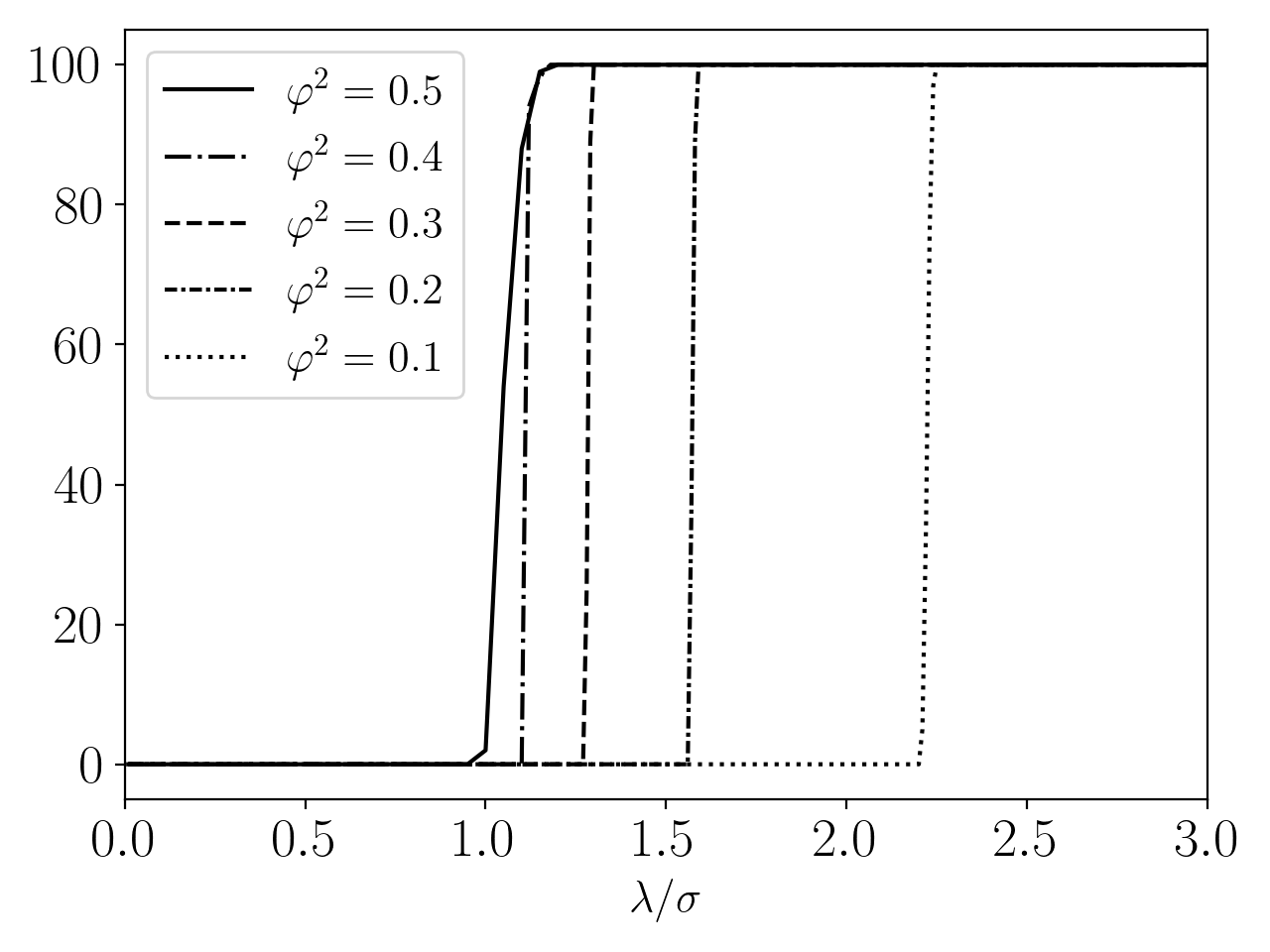} \\[-2mm] 
    \caption{$M=N=9$, MSE}
    \label{fig:occurrence_mse_9}
    \end{subfigure}
    \begin{subfigure}{.45\textwidth}
    \centering
    \includegraphics[width=.8\linewidth]{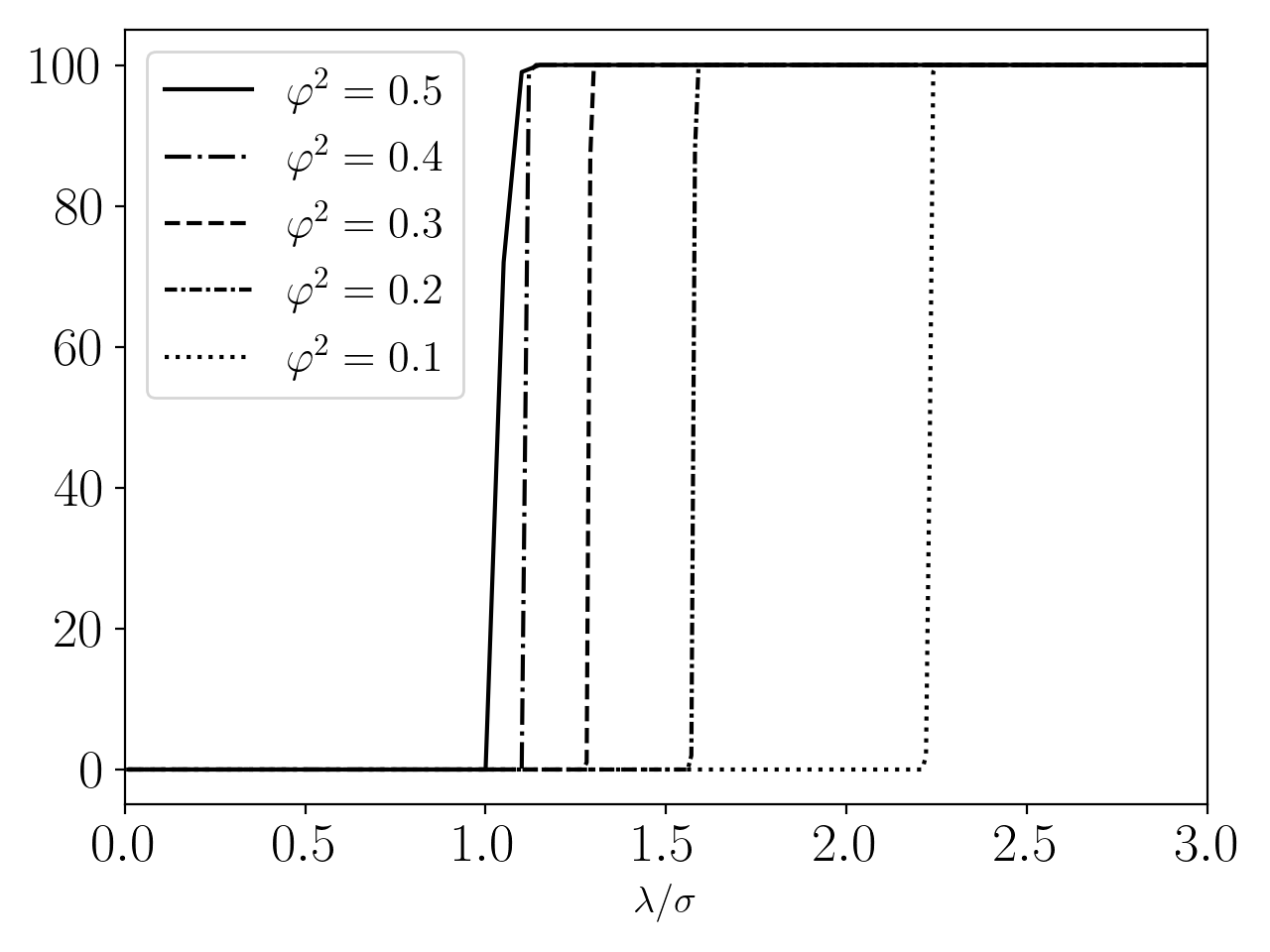}  \\[-2mm]
    \caption{$M=N=10$, MSE}
    \label{fig:occurrence_mse_10}
    \end{subfigure}
    \caption{The empirical frequencies of the correct configuration selection out of 100 repetitions.
    % \rc{Maybe increase the font size of the x-axis tick mark. I almost tried to compare Figures (a) and (c), though they are completely uncompariable} 
    }
    \label{fig:occurrence_one-term}
\end{figure}

For extremely weak signal-to-noise ratio $\lambda \leqslant 0.03$, neither
of MSE and AIC is able to select the true configuration with a
high probability, for both configurations.
This does not
contradict with Theorem~\ref{thm:consistency_nonrandom}. When the signal is very
weak, larger dimensions of the observed matrix $\bm Y$ are required
for the consistency. As the signal-to-noise ratio increases from $0.01$
to $0.13$, the probability that the true configuration is selected
increases gradually and eventually gets very close to one for AIC as shown in Figures~\ref{fig:occurrence_aic_9} and \ref{fig:occurrence_aic_10}. We also note that the performance gets better as the representation gap $\psi^2$ increases. These observations are echoing
Theorem~\ref{thm:ic-gap-nonrandom}, which shows that AIC (with $\kappa = 2 > 2\ln 2$) only
requires a minimal condition $(\lambda/\sigma)^2\psi^2 > 0$ to achieve the
consistency, and the separation gap of AIC is a monotone function of $(\lambda/\sigma)^2\psi^2$. On the other hand, the performance of MSE exhibits a phase transition: it only starts to select the true
configuration with a decent probability when the signal-to-noise ratio
$\lambda/\sigma$ exceed a certain threshold. The theoretical asymptotic threshold for MSE is $\lambda/\sigma \geqslant \sqrt{1/(2\psi^2)}$ as discussed in Remark 5. For $\psi^2\in\{0.5, 0.4, 0.3, 0.2, 0.1\}$ used in this simulation, the corresponding thresholds for $\lambda/\sigma$ are $\{1, 1.12, 1.29, 1.58, 2.24\}$, which can be clearly visualized in Figures~\ref{fig:occurrence_mse_9} and \ref{fig:occurrence_mse_10}. 

Comparing Figures~\ref{fig:occurrence_aic_9} with Figures~\ref{fig:occurrence_aic_10}, we see that the empirical frequency curve increases from 0 to 100 much faster when the matrices are larger. This is consistent with Theorem~\ref{thm:ic-gap-nonrandom}, which shows that the probability of correct configuration selection approaches 1 exponentially fast. \\[2em]
%The difference for phase transition of MSE criterion is not obvious in this simulation.

\noindent\textbf{Experiment 2: Two-term KoPA model}\\[1em]
In the second experiment, we consider the two-term KoPA model in \eqref{eq:model-two-term} where $\bm A_k$ and $\bm B_k$ are generated under the random scheme in Example~\ref{example:two-term-normal} such that $\psi_1^2\approx 1/2$, $\psi_2^2\approx 1/2$ and $\xi \approx 0$. According to Theorem~\ref{thm:gap-two-term}, besides the signal-to-noise ratio $\lambda_1/\sigma$, the relative strength of the second term $\lambda_2/\lambda_1$ (for the random scheme adopted in this experiment, see Corollary~\ref{corr:two-term-random-scheme}) affects the configuration selection as well. 

In this simulation, we fix the configurations to $M=N=9, (m_0, n_0)=(4, 4)$ and consider four different relative strengths of the second term $\lambda_2^2/\lambda_1^2\in\{0.3, 0.4, 0.5, 0.6\}$. Similar to Experiment 1, we report the empirical frequencies of correct configurations selection of MSE and AIC, out of 100 repetitions, as a function of the signal-to-noise ratio $\lambda_1/\sigma$ in Figure~\ref{fig:occurrence_two-term}.

\begin{figure}[htb]
    \centering
    \begin{subfigure}{.45\textwidth}
    \centering
    \includegraphics[width=.8\linewidth]{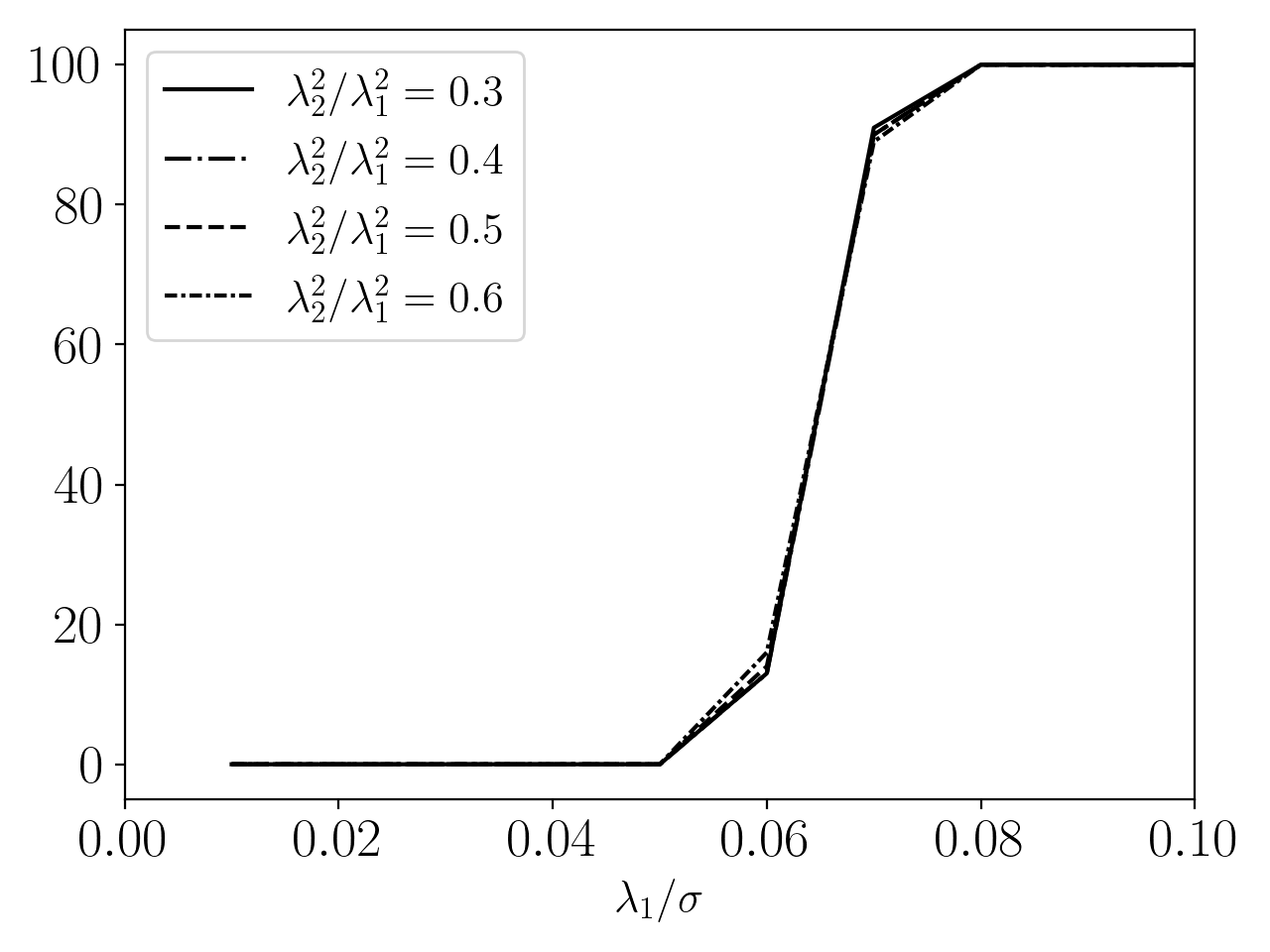}  
    \caption{$M=N=9$, AIC}
    \label{fig:occurrence_two_term_aic_9}
    \end{subfigure}
    \begin{subfigure}{.45\textwidth}
    \centering
    \includegraphics[width=.8\linewidth]{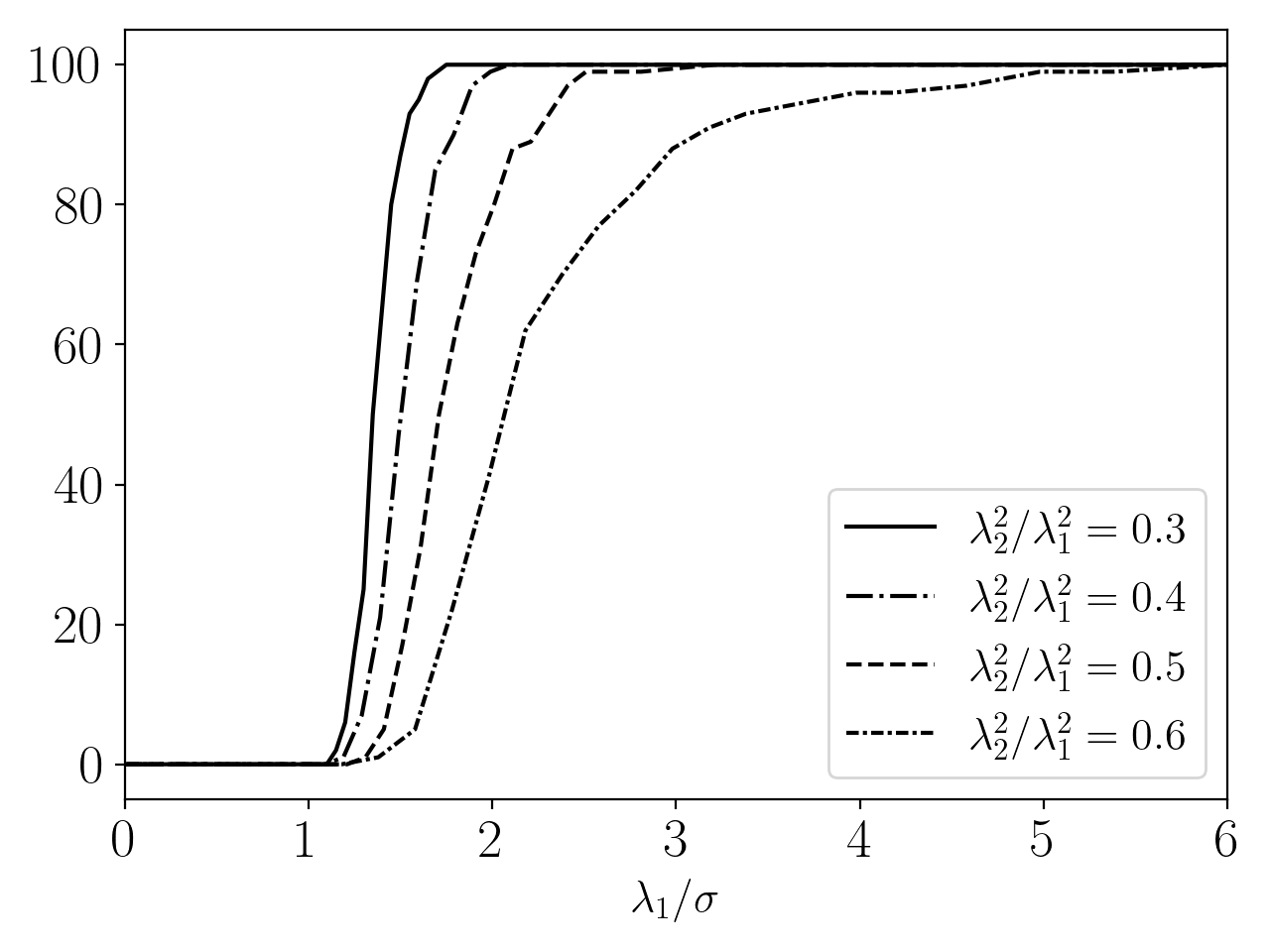}  
    \caption{$M=N=9$, MSE}
    \label{fig:occurrence_two_term_mse_9}
    \end{subfigure}
    \caption{The empirical frequencies of the correct configuration selection out of 100 repetitions in a two-term model.}
    \label{fig:occurrence_two-term}
\end{figure}

Figure~\ref{fig:occurrence_two_term_aic_9} shows that the performance of AIC is in-sensitive to the ratio $\lambda_2^2/\lambda_1^2$ over the experimented range. To the contrary, it is seen from Figure~\ref{fig:occurrence_two_term_mse_9} that MSE performs better when the ratio $\lambda_2^2/\lambda_1^2$ gets smaller, which is consistent with Corollary~\ref{corr:two-term-random-scheme}.

%Figure~\ref{fig:occurrence_two_term_aic_9} is close to Figure~\ref{fig:occurrence_aic_9} as the phase transition of AIC with $\kappa > 2\ln 2$ is not sensitive to the representation gap and the relative strength of the second term. Figure~\ref{fig:occurrence_two_term_mse_9} illustrates the effects of relative strength $\lambda_2^2/\lambda_1^2$ in changing the phase transition phenomenon of MSE in a two-term model. A relatively stronger second term in model \eqref{eq:model-two-term} requires a larger $\lambda_1/\sigma$. This can also be seen from Corollary~\ref{corr:two-term-random-scheme}, where under random scheme, the equivalent overall signal-to-noise ratio in identifying the configuration of the first term is $\lambda_1^2/(\lambda_2^2+\sigma^2)$. Specifically, the threshold value for $\lambda_1/\sigma$ to observe phase transition in a two-term model is $\lambda_1/\sigma \geqslant \sqrt{1/(1-\lambda_2^2/\lambda_1^2)}$. The values for this threshold under $\lambda_2^2/\lambda_1^2\in\{0.4,0.5,0.6,0.7\}$ are $\lambda_1/\sigma\in \{1.20, 1.29, 1.41,1.58\}$, correspondingly. The monotonic relationship of phase transition to the relative strength is observed from Figure~\ref{fig:occurrence_two_term_mse_9}. 
%{\bf HX. The phase transition does not look very obvious in the plot.}

\subsection{Analysis of Image Examples}
\label{sec:example}
\subsubsection{The cameraman's image}\label{sec:cameraman}
In this section we revisit and analyze the cameraman image introduced in
Section~\ref{sec:intro}. The original image, denoted by $\bm Y_0$, has $512\times 512$ pixels.
Each entry of $\bm Y_0$ is a real number between
0 and 1, where 0 codes black and 1 indicates white. The grayscale
cameraman image $\bm Y_0$ is displayed in Figure
\ref{fig:cameraman_intro}.  
% \rc{(The image has been shown earlier. We should remove it)}

% \begin{figure}[!hptb]
%     \centering
%     \includegraphics[width=0.3\textwidth]{cameraman}
%     \caption{The cameraman image.}
%     \label{fig:cameraman}
% \end{figure}

Our analysis will be based on the de-meaned version ${\bm Y}$ of the
original image $\bm Y_0$. We demonstrate how well the image $\bm Y$
can be approximated by a Kronecker product or the sum of a few
Kronecker products, and make comparisons with the low rank
approximations given by SVD.

We first consider the configuration selection by MSE, AIC and BIC on the original image $\bm Y$.
Figure~\ref{fig:ic_cameraman} plots the heat maps for the information
criterion $\mathrm{IC}_\kappa(m,n)$ for all candidate configurations in the set
\begin{equation*}
  \mathcal{C}=\{(m,n):\;0\leqslant m,n\leqslant 9\}\setminus\{(0,0),(9,9)\},
\end{equation*}
where the top-left and bottom-right corners are always excluded from
the consideration. Since darker cells correspond to smaller values of
the information criteria, we see that MSE and AIC select the
configuration $(8,9)$, and BIC selects $(6,7)$.

We also observe an overall pattern in Figure~\ref{fig:ic_cameraman}:
configurations with larger $(m, n)$ values are more preferable than
those with smaller $(m, n)$. Note that the Kronecker product does not
commute, and with configuration $(m, n)$ the product 
% \rc{(change 'it' to 'the results' or something?)}
is a $2^m\times 2^n$ block
matrix, each block of the size $2^{9-m}\times 2^{9-n}$. Real images usually show the locality of pixels in the sense that nearby
pixels tend to have similar colors. Therefore, it can be understood that larger values of $m$ and $n$ are
preferred, since they are better suited to capture the locality. Actually, for the cameraman's image, the configuration $(8, 9)$
accounts for 99.50\% of the total variation of ${\bm Y}$. The penalty
on the number of parameters in AIC is not strong enough to offset the
closer approximation given by the configuration $(8, 9)$. With a
stronger penalty term, BIC selects a configuration that is closer to
the center of the configuration space, involving a much smaller number
of parameters.
\begin{figure}[!tb]
  \centering
  \includegraphics[width=0.3\textwidth]{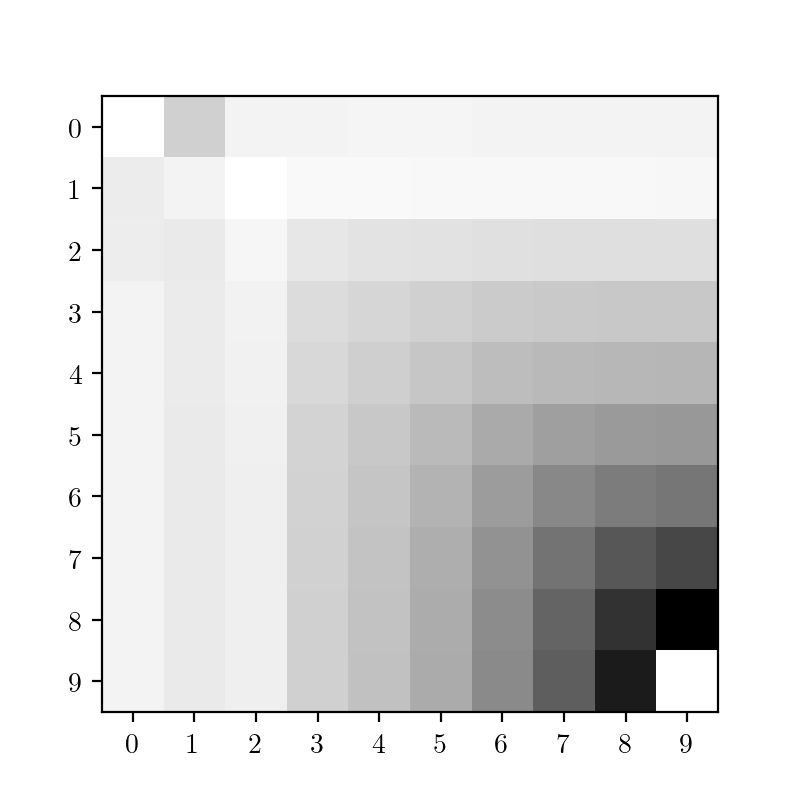}
  \includegraphics[width=0.3\textwidth]{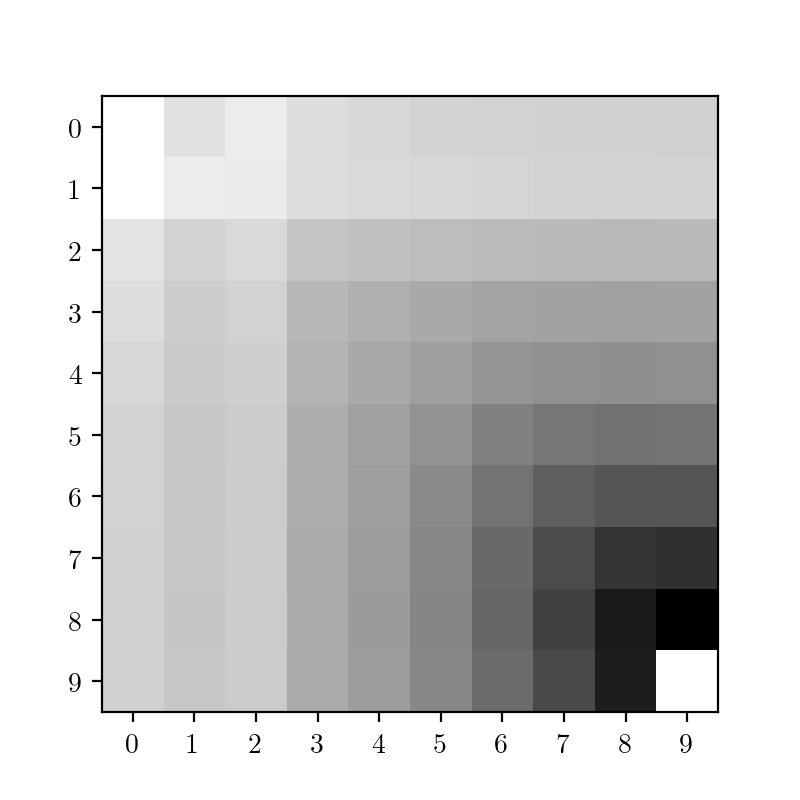}
  \includegraphics[width=0.3\textwidth]{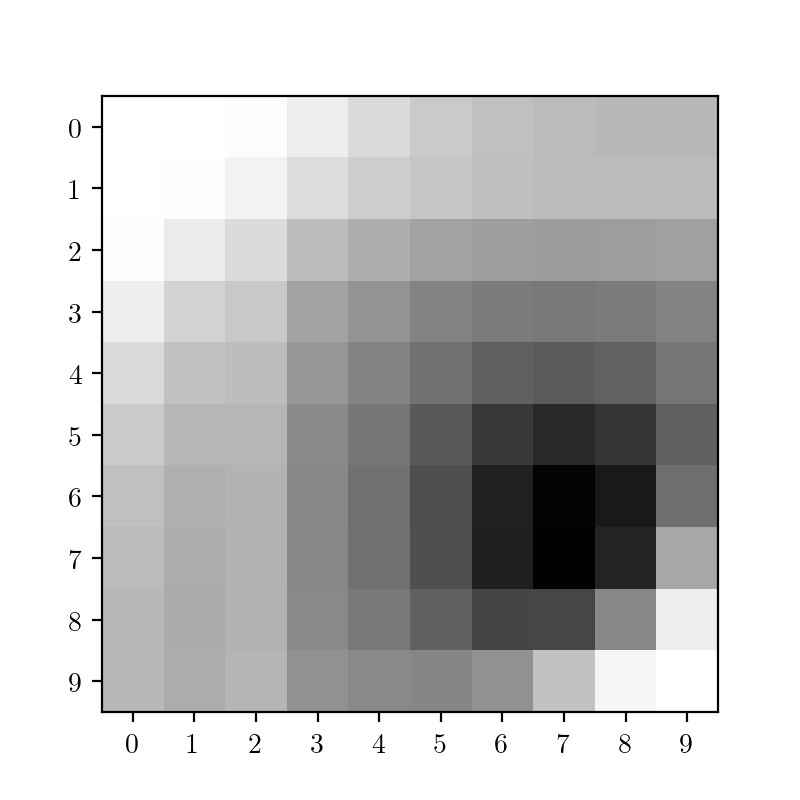}
  \caption{Information Criteria for the cameraman's image. (Left) MSE (Mid) AIC (Right) BIC. Darker color corresponds to lower IC value.}
  \label{fig:ic_cameraman}
\end{figure}

From the perspective of image compressing, KoPA is more flexible than the low
rank approximation, by allowing a choice of the configuration,
and hence a choice of the compression rate. To compare their performances, we use the ratio $\|\hat{\bm Y}\|_F^2/\|\bm Y\|_F^2$ to measure how close the approximation $\hat{\bm Y}$ is to the original image $\bm Y$. In Figure
\ref{fig:compression_cameraman}, these ratios are plotted against the numbers of parameters for the KPD, marked by “+” on the solid line. Since the number of parameters involved in the Kronecker product with configuration $(m,n)$ is $\eta = 2^{m+n} + 2^{M+N-m-n}$, 
the configurations $\{(m,n):\, m+n=c\}$ for any given $0<c<M+N$ have the same number of parameters. Among these configurations, we only plot the one with the largest $\|\hat{\bm Y}\|_F^2/\|\bm Y\|_F^2$.
On the other hand, each cross stands for a rank-$k$ approximation of $\bm Y$, where its value on the horizontal axis is the number of parameters
$$\eta = 1+\sum_{j=1}^k (2^M+2^N - 2j + 1)\quad \text{for }k=1,\dots, 2^{M\wedge N}.$$ 
According to Figure \ref{fig:compression_cameraman}, there always
exists a one-term Kronecker product which provides
a better approximation of the original cameraman's image than the best low rank
approximation involving the same number of parameters.
 
\begin{figure}[!hptb]
    \centering
    \includegraphics[width=0.5\textwidth]{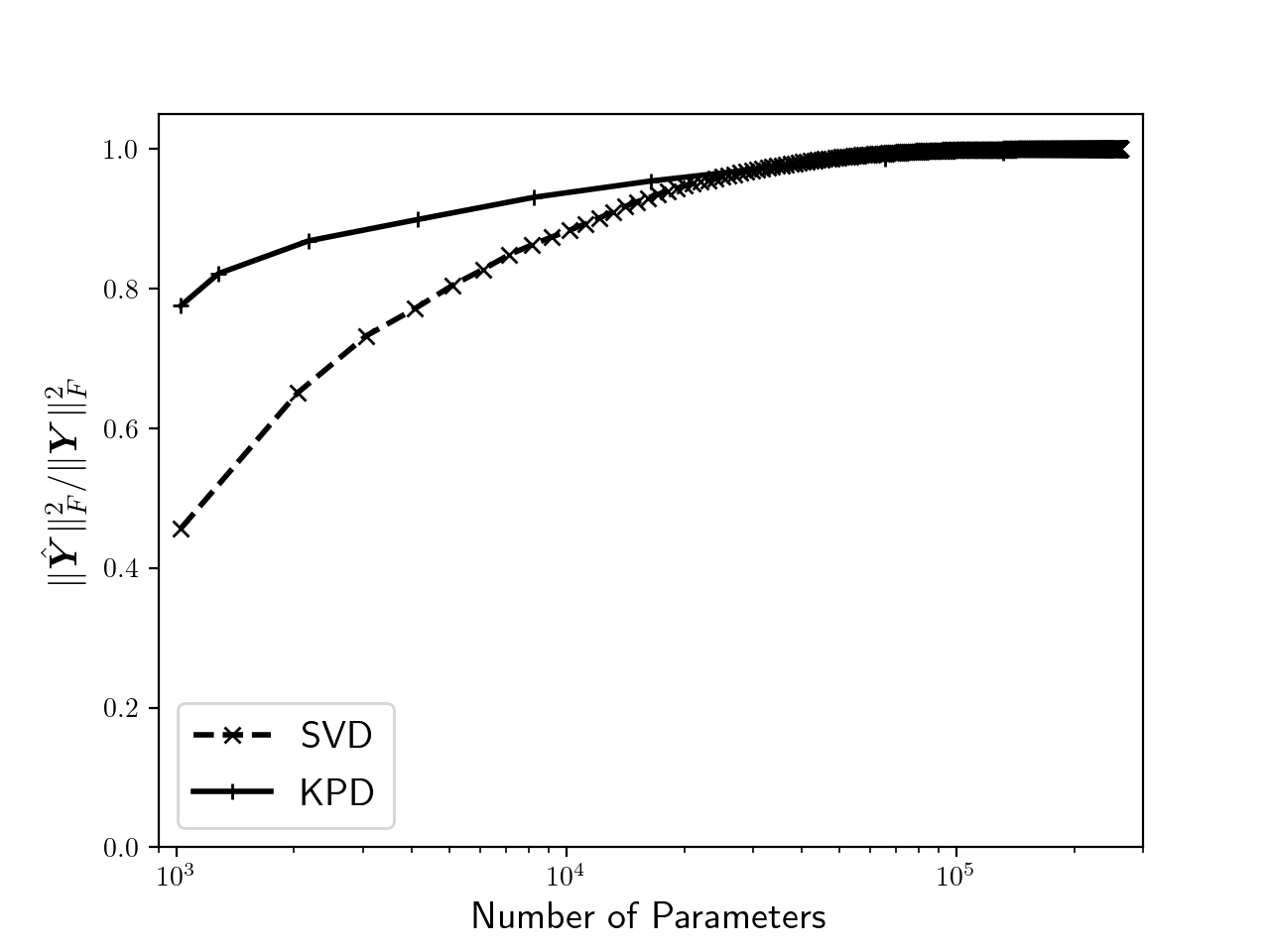}
    \caption{Percentage of variance explained against number of parameters, for KoPA with all configurations, and for low rank approximations of all ranks.}
    \label{fig:compression_cameraman}
\end{figure}

We also consider de-noising the images corrupted by additive Gaussian white noise
$$\bm Y_\sigma = {\bm Y} + \sigma \bm E,$$
where $\bm E$ is a matrix with IID standard normal entries. We
experiment with three levels of corruption:
$\sigma = 0.1, 0.2, 0.3$. Examples of the corrupted images with different $\sigma$
are shown in Figure \ref{fig:blurred_cameraman} with the values rescaled to $[0, 1]$ for plotting.

\begin{figure}[!tb]
  \centering
  \includegraphics[width=0.3\textwidth]{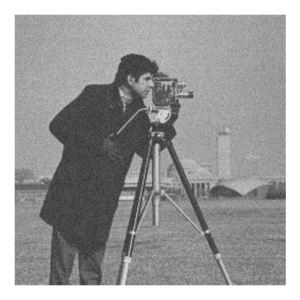}
  \includegraphics[width=0.3\textwidth]{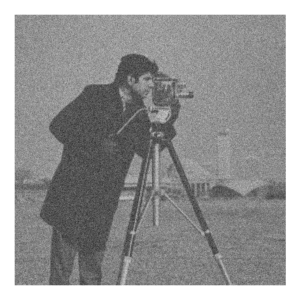}
  \includegraphics[width=0.3\textwidth]{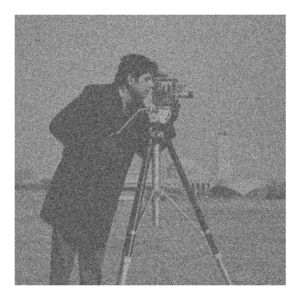}
  \caption{Noisy cameraman's images when (Left) $\sigma=0.1$ (Mid) $\sigma=0.2$ (Right) $\sigma=0.3$}
  \label{fig:blurred_cameraman}
\end{figure}
For the corrupted images, the information criteria $\mathrm{IC}_\kappa(m,n)$ are
calculated, and the corresponding heat maps are plotted in
Figure~\ref{fig:ic_blurred_cameraman}. With added noise, AIC and BIC tend
to select configurations in the middle of the configuration space.
 
 \begin{figure}[!hptb]
     \centering
     \begin{tabular}{cccc}
          &MSE & AIC & BIC\\
         \rotatebox{90}{$\sigma = 0.1$} & 
         \raisebox{-.5\height}{\includegraphics[width=0.3\textwidth]{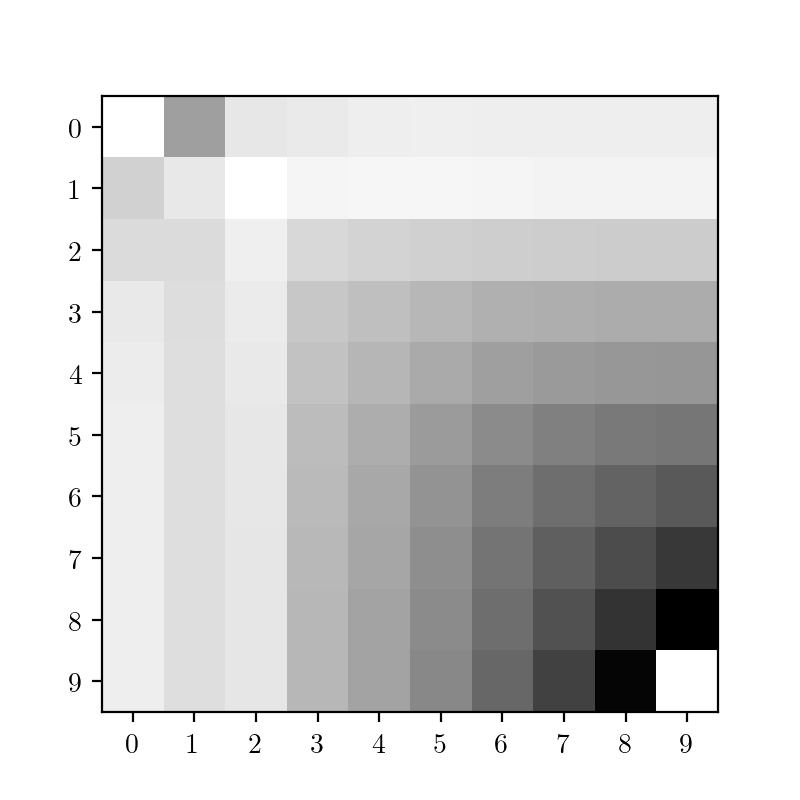}}&
         \raisebox{-.5\height}{\includegraphics[width=0.3\textwidth]{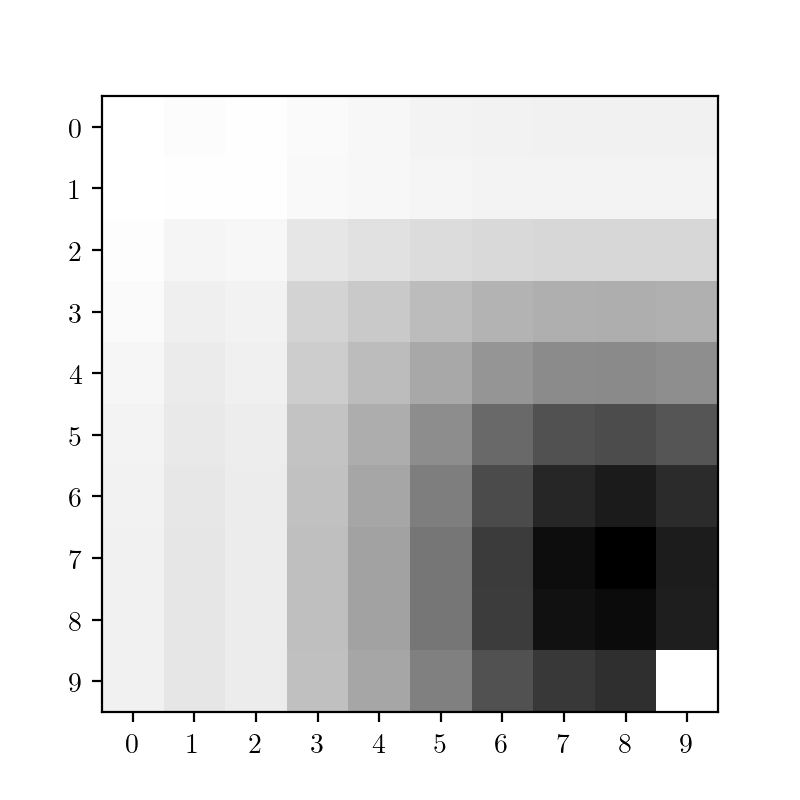}}&
         \raisebox{-.5\height}{\includegraphics[width=0.3\textwidth]{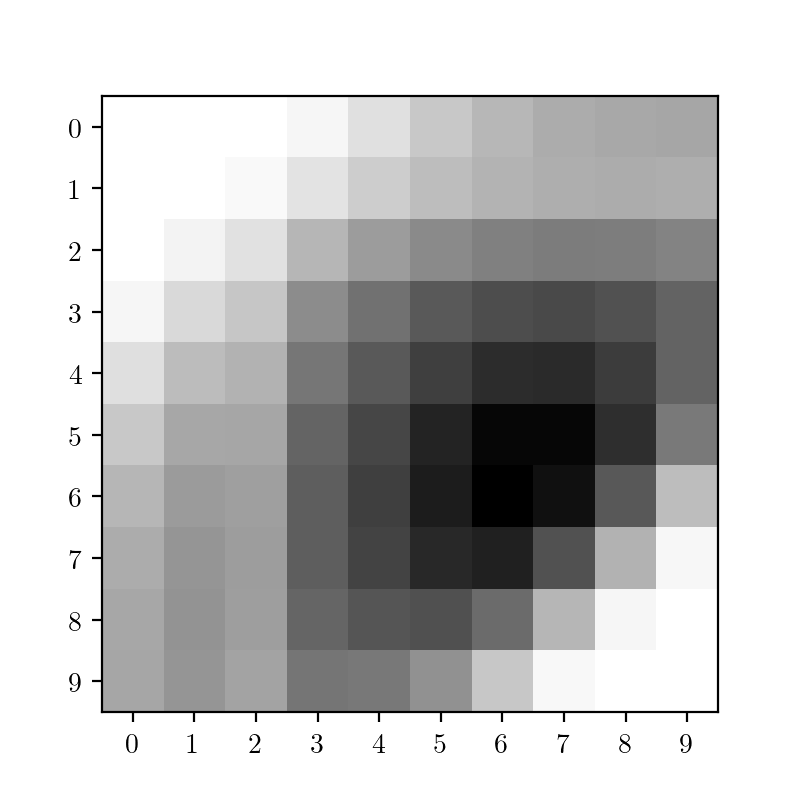}}\\
         \rotatebox{90}{$\sigma = 0.2$} & 
         \raisebox{-.5\height}{\includegraphics[width=0.3\textwidth]{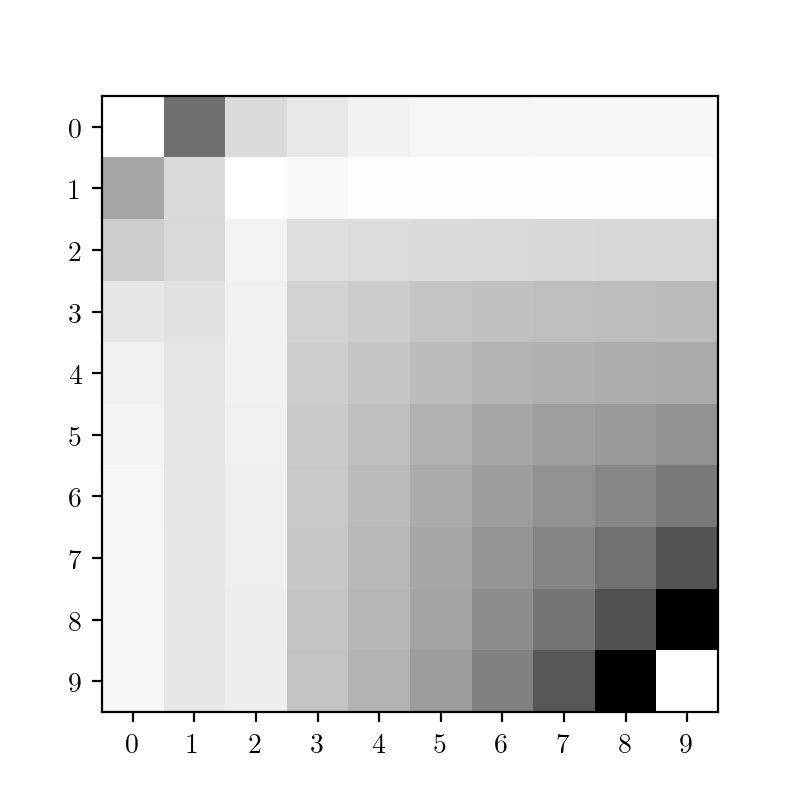}}&
         \raisebox{-.5\height}{\includegraphics[width=0.3\textwidth]{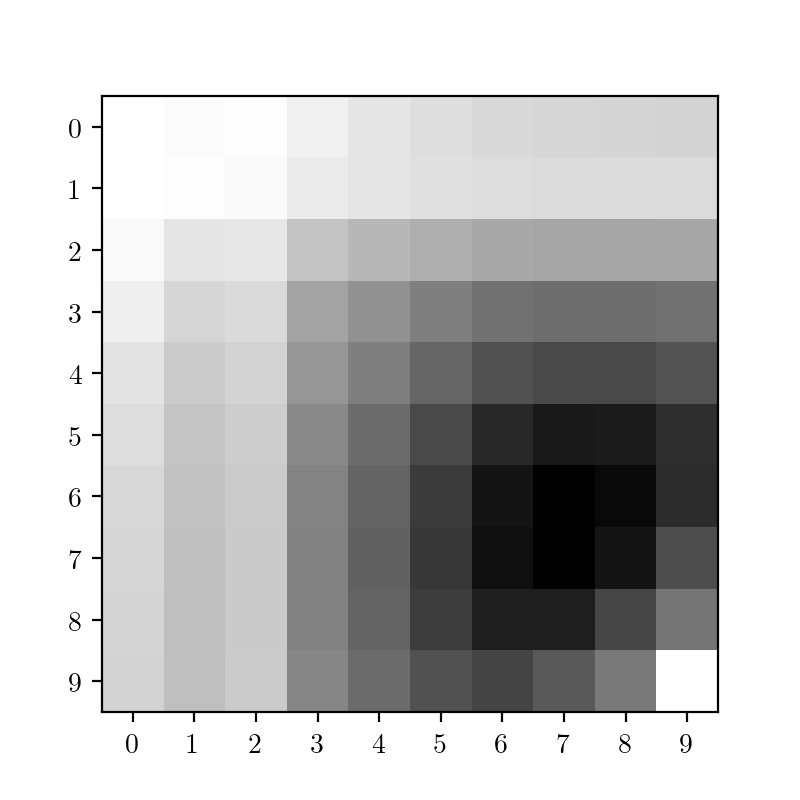}}&
         \raisebox{-.5\height}{\includegraphics[width=0.3\textwidth]{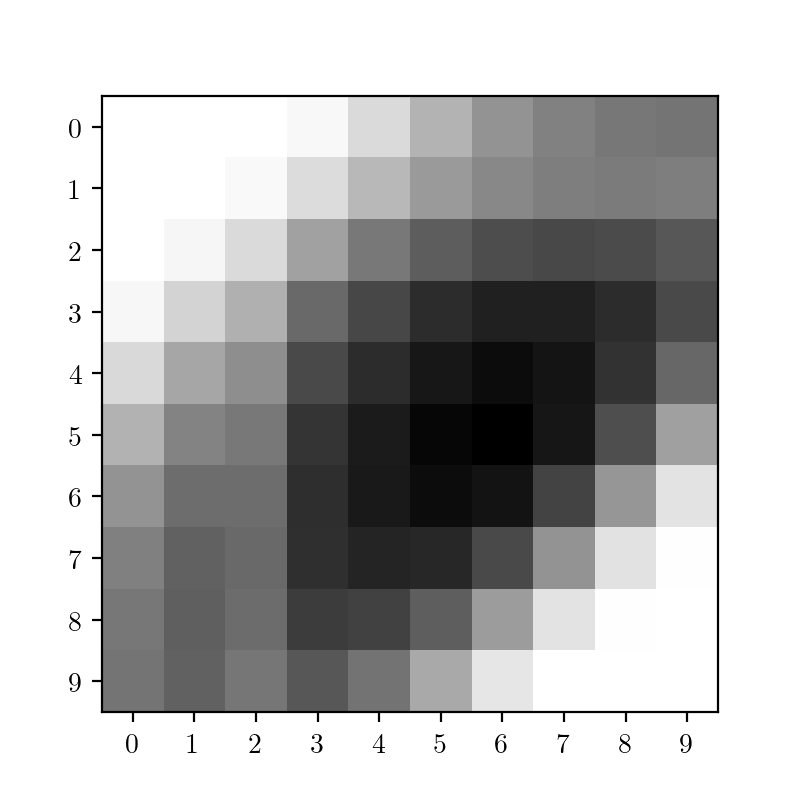}}\\
         \rotatebox{90}{$\sigma = 0.3$} & 
         \raisebox{-.5\height}{\includegraphics[width=0.3\textwidth]{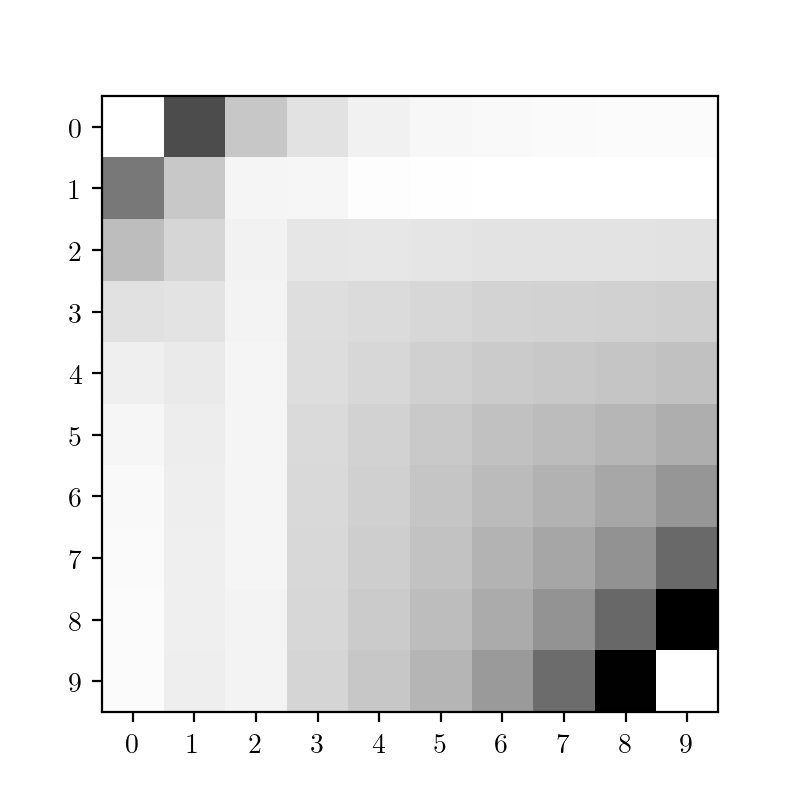}}&
         \raisebox{-.5\height}{\includegraphics[width=0.3\textwidth]{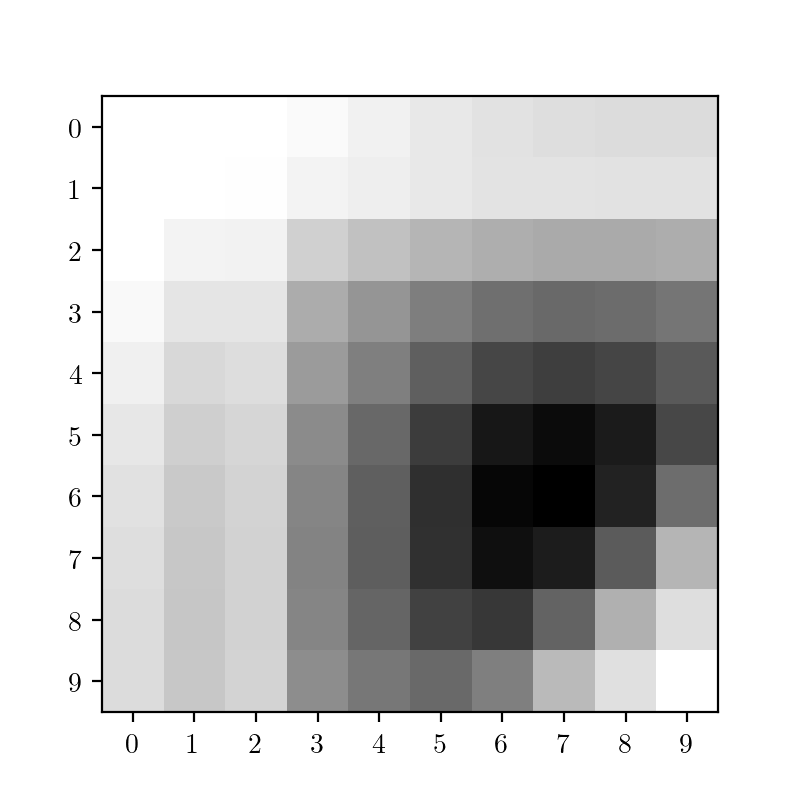}}&
         \raisebox{-.5\height}{\includegraphics[width=0.3\textwidth]{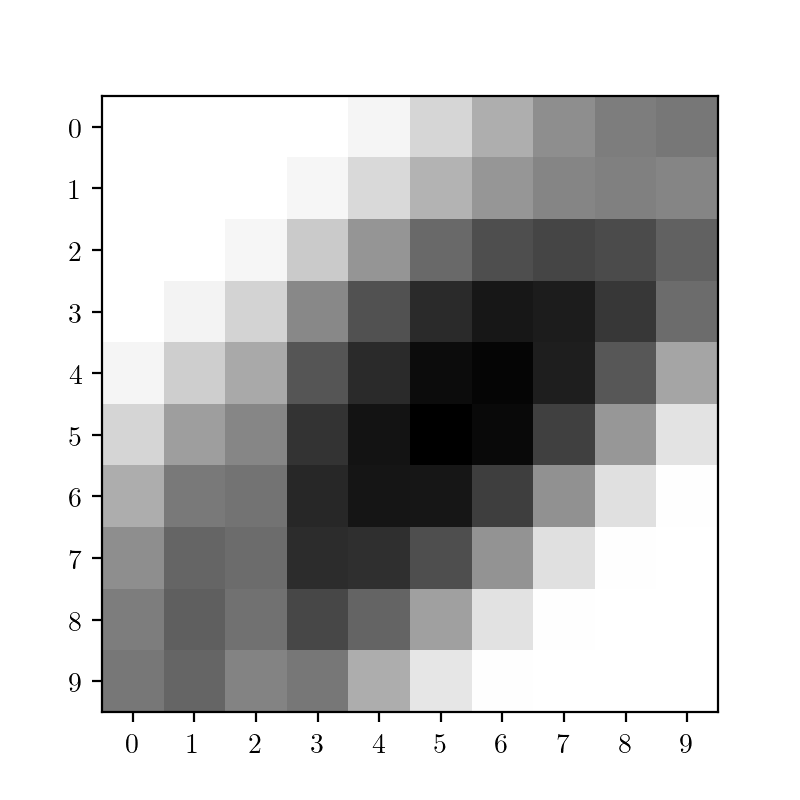}}
     \end{tabular}
     \caption{Heat maps for three different information criteria for the camera's images with different noise levels. Darker color means lower IC value.}
     \label{fig:ic_blurred_cameraman}
 \end{figure}
 
 Now we consider multi-term Kronecker approximation. Following the
 discussion in Section \ref{sec:extension}, for each of three corrupted
 images $\bm Y_\sigma$, we use the configuration selected by BIC in
 Figure \ref{fig:ic_blurred_cameraman}.
 \begin{figure}[!tb]
     \centering
     \begin{tabular}{cccc}
     & $\sigma = 0.1$ & $\sigma = 0.2$ & $\sigma = 0.3$\\
     \rotatebox{90}{KoPA} & \raisebox{-.5\height}{\includegraphics[width=0.3\textwidth]{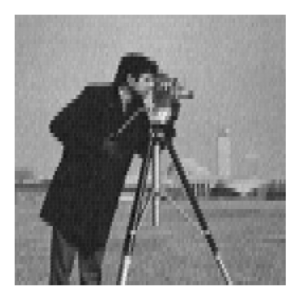}}&
     \raisebox{-.5\height}{\includegraphics[width=0.3\textwidth]{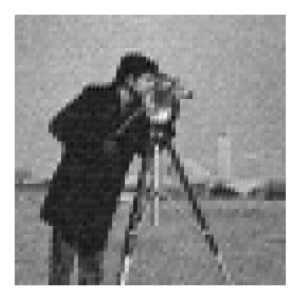}}&
     \raisebox{-.5\height}{\includegraphics[width=0.3\textwidth]{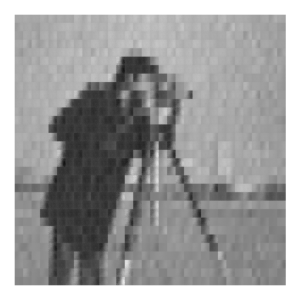}}\\
     \rotatebox{90}{SVD}& \raisebox{-.5\height}{\includegraphics[width=0.3\textwidth]{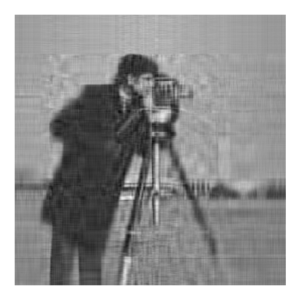}}&
     \raisebox{-.5\height}{\includegraphics[width=0.3\textwidth]{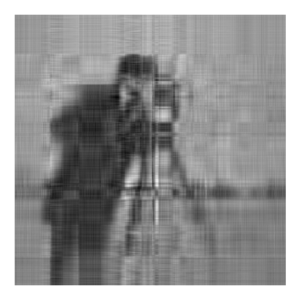}}&
      \raisebox{-.5\height}{\includegraphics[width=0.3\textwidth]{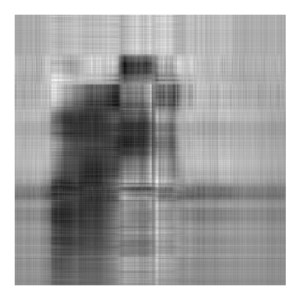}}
     \end{tabular}
     \caption{The fitted image given by multi-term KoPA, and the
       SVD approximation with similar number of parameters. 
    %   \rc{Maybe switch the rows so KoPA is on top? But make sure the discussion in the main text is also switched} 
       }
     \label{fig:two-term}
 \end{figure}
 Specifically, configurations $(6, 6)$, $(5, 6)$ and $(5, 5)$ are
 selected when $\sigma = 0.1$, 0.2 and 0.3, respectively. A two-term
 Kronecker product model \eqref{eq:model-two-term} is then fitted
 under the selected configuration. The fitted images are plotted in
 the upper panel of Figure \ref{fig:two-term}. Each of them is compared with the image obtained by the low rank approximation involving a similar number of parameters as the two-term KoPA.
 From Figure~\ref{fig:two-term}, it
 is quite evident that the details can easily be recognized from the images reconstructed by the two-term KoPA, but can hardly be perceived in those given by the low rank approximation.
 
 Finally, we examine the reconstruction error defined by
 $$\dfrac{\|{\bm Y} - \hat{\bm Y}\|_F^2}{\|{\bm Y}\|_F^2},$$
 where $\bm Y$ is the original image and $\hat{\bm Y}$ is the one reconstructed from $\bm Y_\sigma$.
 For each of the three noisy images, we continue to use the
 configuration selected by BIC. With fixed configurations, we keep increasing the
 number of terms in the KoPA until $\bm Y_\sigma$ is fully fitted, and plot the
 corresponding reconstruction error against the number of parameters in
 Figure~\ref{fig:error-kpd-svd}. It has the familiar ``U'' shape, showing 
 the trade-off between estimation bias and variation. A similar curve is given for the low rank
 approximations exhausting all possible ranks. From
 Figure~\ref{fig:error-kpd-svd}, it is seen that the multi-term KoPA constantly
 outperforms the low rank approximation at any given number of
 parameters. Furthermore, the minimum reconstruction error that KoPA
 can reach is always smaller than that given by the low rank
 approximation.
 
 \begin{figure}[!tb]
     \centering
     \includegraphics[width=0.3\textwidth]{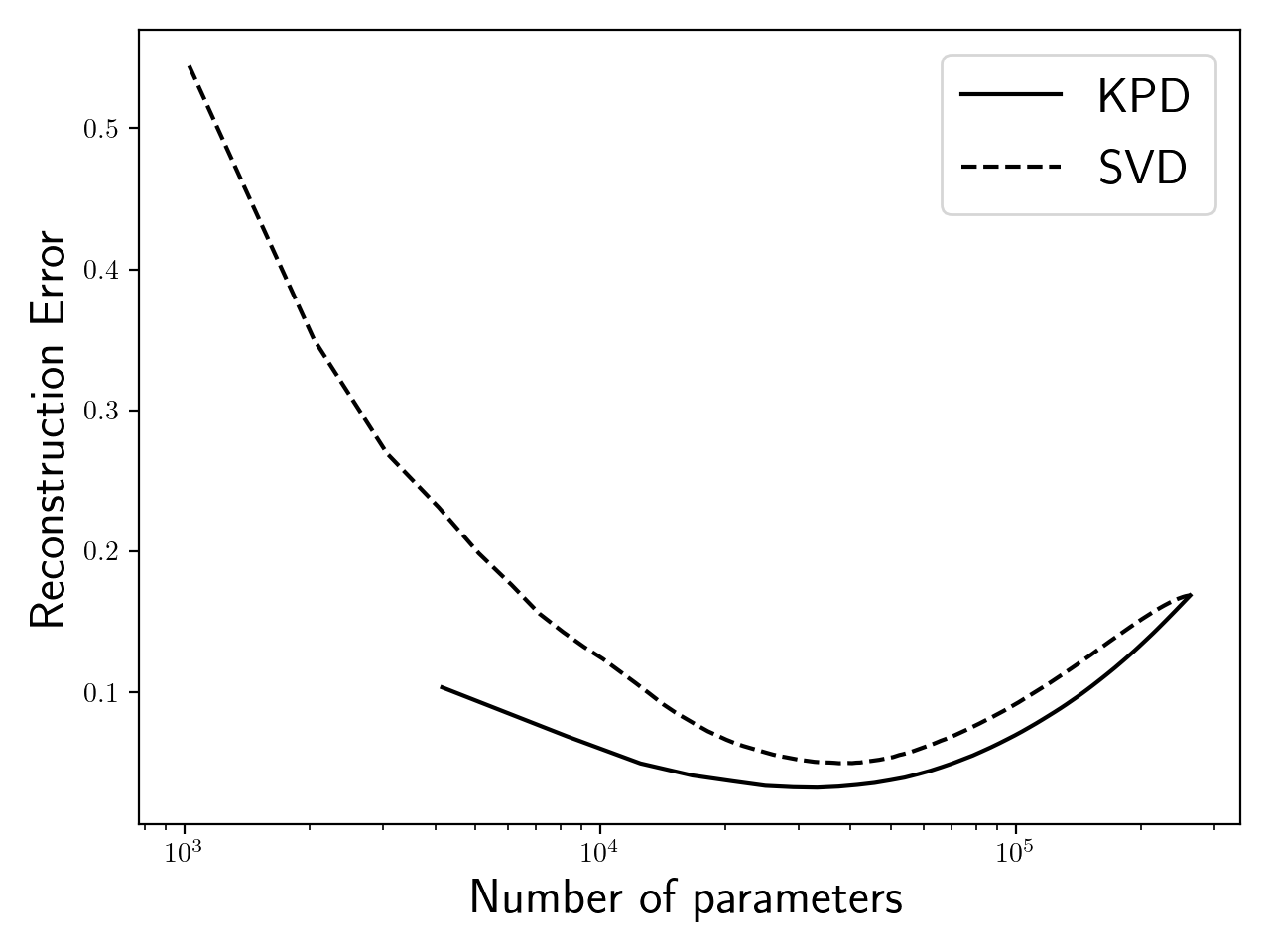}
     \includegraphics[width=0.3\textwidth]{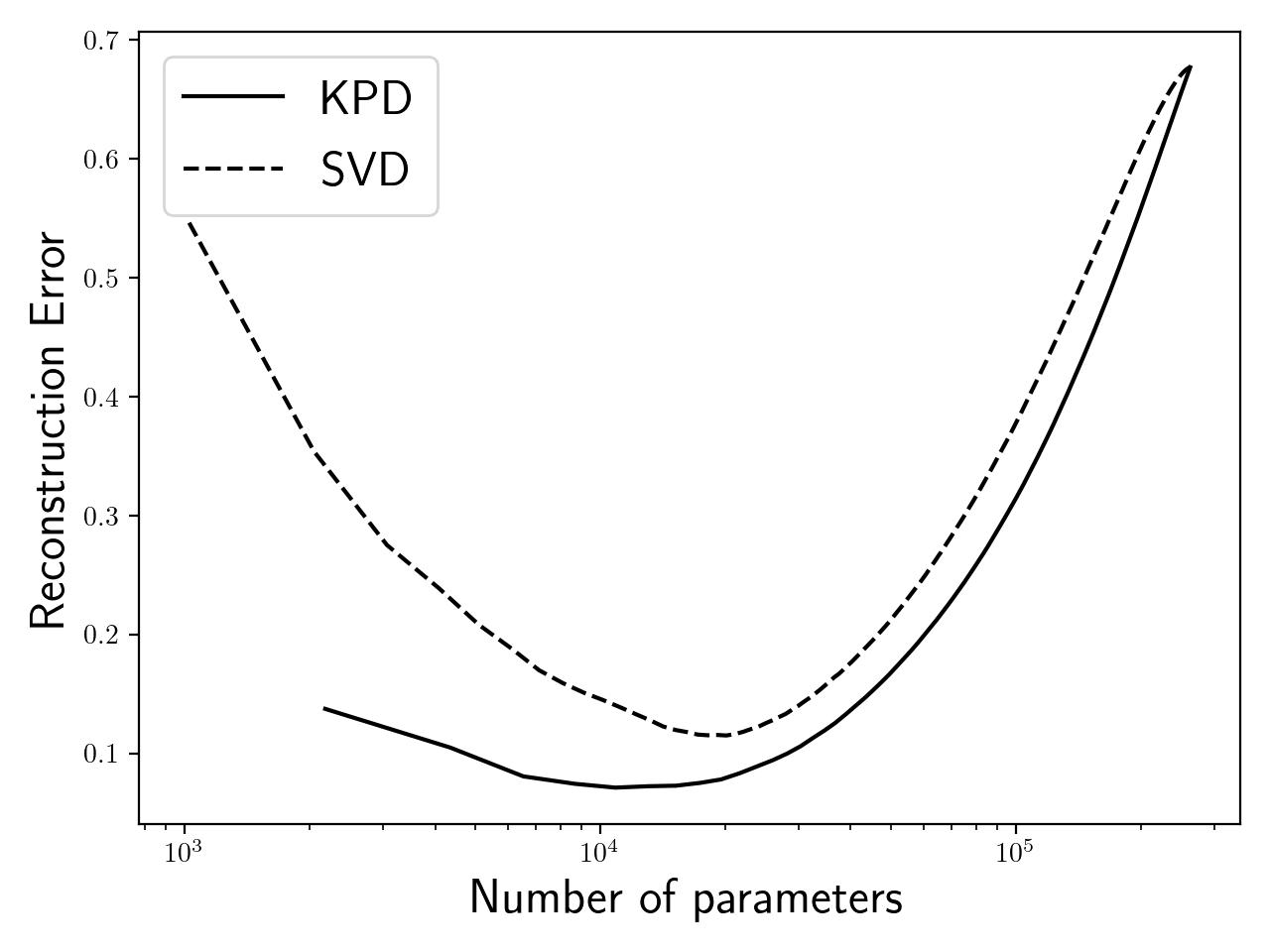}
     \includegraphics[width=0.3\textwidth]{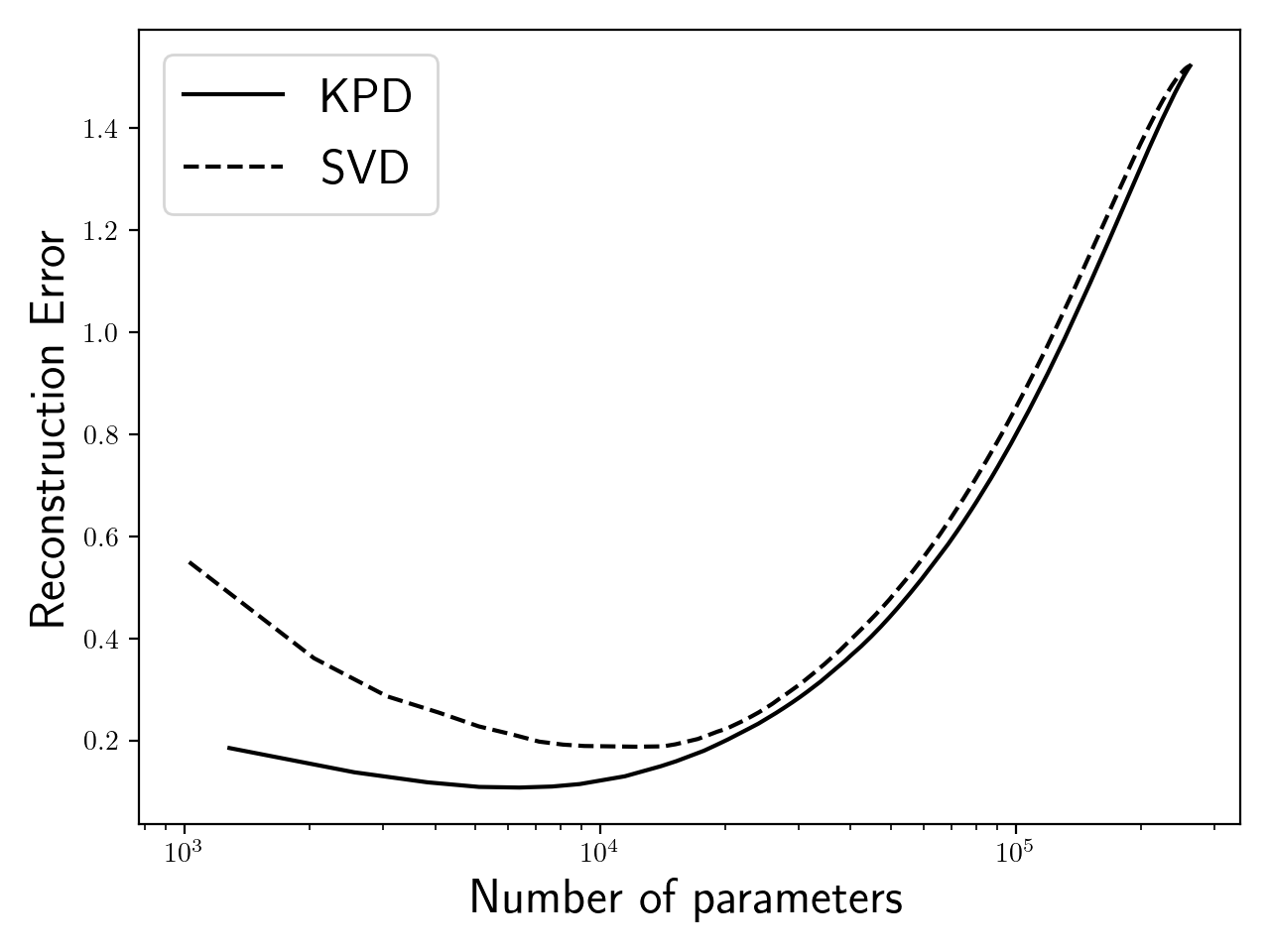}
     \caption{Reconstruction error against the number of parameters
       for KoPA and low rank approximations. The three panels from
       left to right correspond to $\sigma = 0.1$, $\sigma=0.2$ and
       $\sigma=0.3$ respectively.}
     \label{fig:error-kpd-svd}
 \end{figure}
 
 \subsubsection{More images}
 To assess the performance of KoPA model in image denoising, we repeat the experiment in Section~\ref{sec:cameraman} to a larger set of test images. The 10 test images printed in Figure~\ref{fig:test-images} are collected from Image Processing Place\footnote{http://www.imageprocessingplace.com/root\_files\_V3/image\_databases.htm} and The Waterloo image Repository\footnote{http://links.uwaterloo.ca/Repository.html}. Each of the 10 test images is a $512\times 512$ gray-scaled matrix, same as the cameraman's image. We corrupt the test image with additive Gaussian noise, whose amplitude is 0.5 times the standard deviation of all its pixel values:
 $$\bm Y_\sigma = \bm Y + 0.5\cdot \mathrm{std}(\bm Y)\cdot\bm E.$$
 
 \begin{figure}[!tb]
 \centering
 \includegraphics[width=0.19\textwidth]{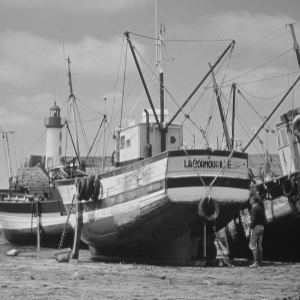}
 \includegraphics[width=0.19\textwidth]{cameraman_low_res.png}
 \includegraphics[width=0.19\textwidth]{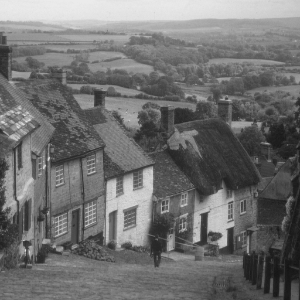}
 \includegraphics[width=0.19\textwidth]{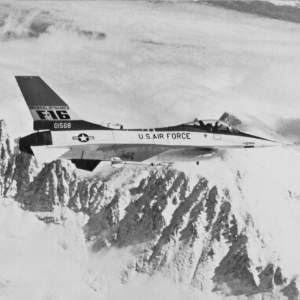}
 \includegraphics[width=0.19\textwidth]{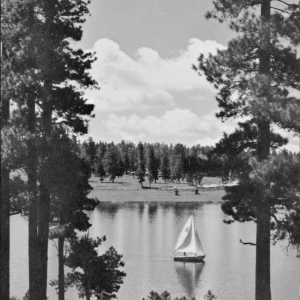}\\
 \includegraphics[width=0.19\textwidth]{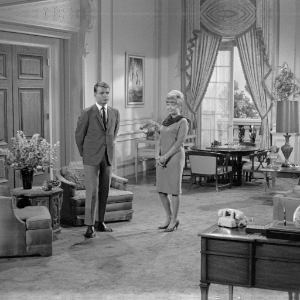}
 \includegraphics[width=0.19\textwidth]{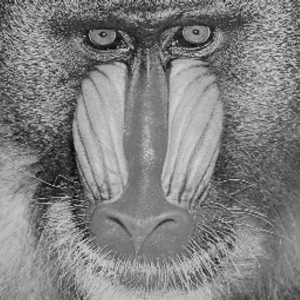}
 \includegraphics[width=0.19\textwidth]{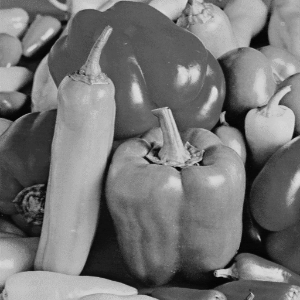}
 \includegraphics[width=0.19\textwidth]{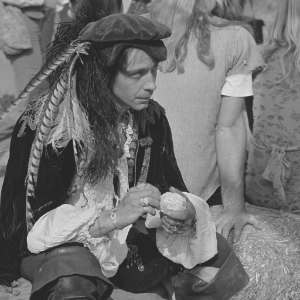}
 \includegraphics[width=0.19\textwidth]{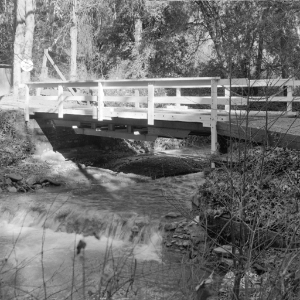}
 \caption{List of test images.}
 \label{fig:test-images}
 \end{figure}
 
 We compare five methods of denoising these images: one-term SVD and KoPA models, multi-term SVD and KoPA models, image denoising algorithm through total variation regularization \citep{chambolle2004algorithm}. Since determining the number of terms in multi-term models is beyond the scope of this article, the number of terms in the multi-term models are chosen to minimize the reconstruction error. The performance of the five approaches on the ten images are reported in Table~\ref{tab:reconstruction-errors}. 
 
 \begin{table}[!tb]
     \centering
     \begin{tabular}{c|ccccc}
     \hline
        image & SVD & KoPA & mSVD & mKoPA & TVR\\
        \hline
        boat & 0.4709 & 0.1757 & 0.0853 & 0.0613 & 0.0356\\
        cameraman & 0.5446 & 0.1337 & 0.0644 & 0.0399 & 0.0294\\
        goldhill & 0.4632& 0.1391 & 0.0759 & 0.0568 & 0.0363\\
        jetplane & 0.7347 & 0.1853 & 0.0866 & 0.0596 & 0.0302\\
        lake & 0.5425 & 0.1287 & 0.0825 & 0.0539 & 0.0308\\
        livingroom & 0.6747 & 0.2055 & 0.0995 & 0.0811 & 0.0589\\
        mandril & 0.6949 & 0.3557 & 0.1471 & 0.0889 & 0.0739\\
        peppers & 0.7394 & 0.1075 & 0.0734 & 0.0445 & 0.0224\\
        pirate & 0.7746 & 0.1533 & 0.1018 & 0.0686 & 0.0413\\
        walkbridge & 0.6617 & 0.2085 & 0.1263 & 0.0925 & 0.0593\\
        \hline
     \end{tabular}
     \caption{Reconstruction errors of one-term SVD, one-term KoPA, multi-term SVD(mSVD), multi-term KoPA(mKoPA) and total variation regularization (TVR) on the ten test images.}
     \label{tab:reconstruction-errors}
 \end{table}
 
 For each image, the configuration of the KoPA is selected by BIC ($\kappa=18\ln 2$).
 From Table~\ref{tab:reconstruction-errors}, the KoPA-based methods outperform SVD-based approaches, which is not surprising as SVD corresponds to a special configuration in KoPA models. On the other hand, the image denoising based on KoPA (and multi-term KoPA) is close to the TVR (total variation regularization) method but the latter does have a superior performance. 
 
 We note that KoPA and TVR are not directly comparable. Image is a special type of matrix data, whose entries usually possess certain continuity in values. TVR fully utilizes this continuity by imposing regularization on the total variation while SVD and KoPA do not. 
 % Second, SVD and KoPA are parametric models while TVR is non-parametric, whose equivalent degree of freedom is controlled by its penalty coefficient. As a result, TVR has the best performance over the parametric approaches. 
 The difference can be seen from Figure~\ref{fig:mandril_fitted} as well. The TVR can recover the smooth region (the mandrill's nose) well, while the multi-term KoPA model has more details in non-smooth regions (the mandrill's fur and beard). Finally we remark that the performance of KoPA approach on image analysis can possibly be improved by adding a similar penalty term on the smoothness of $\bm B$. 

\begin{figure}[htb]
    \centering
    \includegraphics[width=0.25\textwidth]{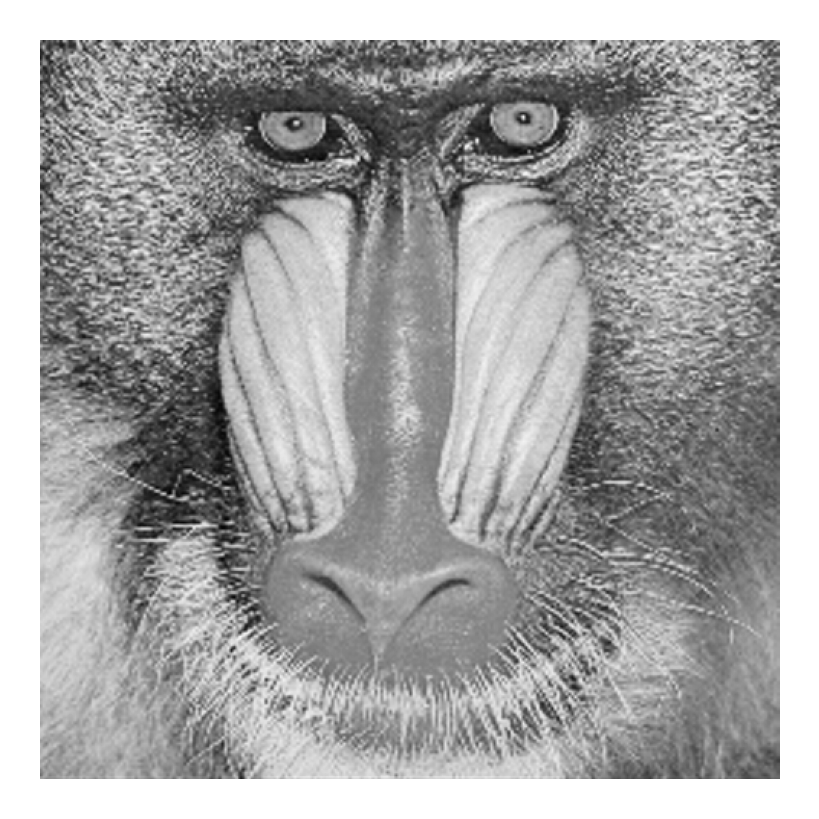}
    \includegraphics[width=0.25\textwidth]{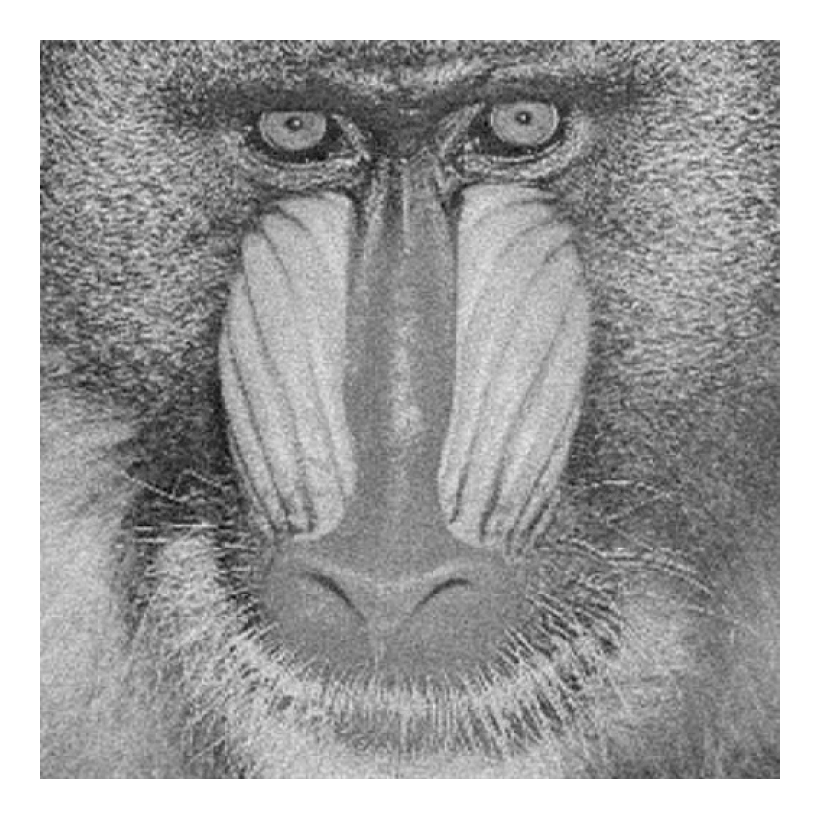}
    \includegraphics[width=0.25\textwidth]{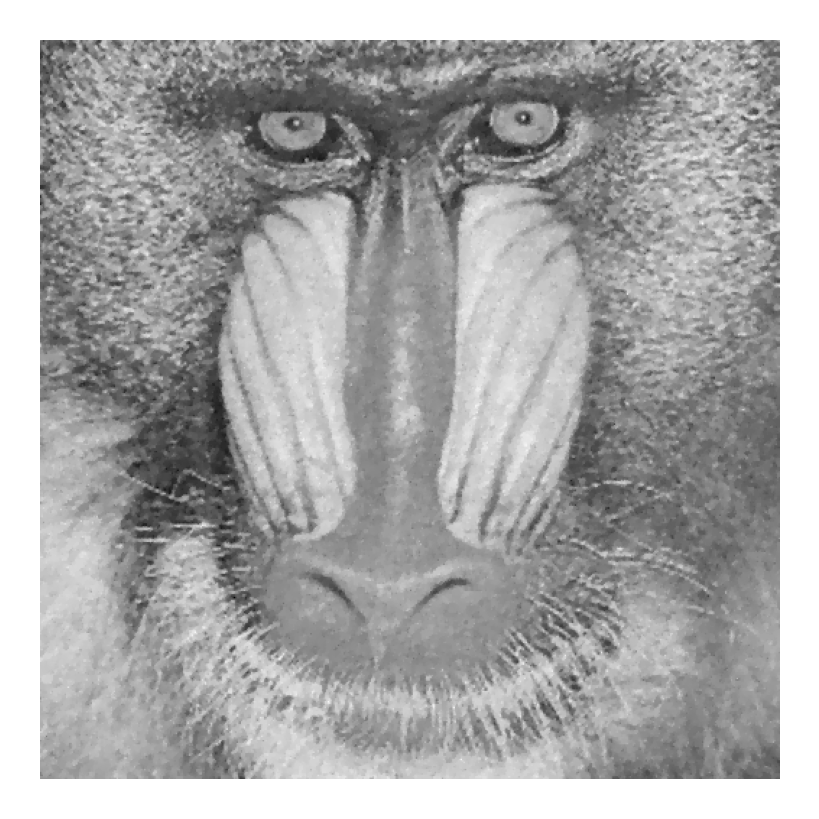}
    \caption{(left) The mandrill image, (mid) recovered images from multi-term KoPA model and (right) total variation regularization.}
    \label{fig:mandril_fitted}
\end{figure}
 
 %In conclusion, KoPA is designed for any matrix data with potential Kronecker product structure, which usually outperforms the low rank approximation. It has certain power in image denoising problem, but is usually outperformed by state-of-art image denoising algorithms. We list TVR algorithm in Table~\ref{tab:reconstruction-errors} for the completeness of this experiment.

\section{Conclusion and Discussions}\label{sec:conclusion}

In this article, we propose to use the Kronecker product approximation as an
alternative of the low rank approximation of large
matrices. Comparing with the low rank approximation, KoPA is more
flexible because any configuration of the Kronecker product can
potentially be used, leading to different levels of approximation and
compression. To select the configuration, we propose to use the
extended information criterion, which includes MSE, AIC and BIC as
special cases. We establish the asymptotic
consistency of the configuration selection procedure, and uses an example with a random Kronecker product to illustrate how the technical assumptions are fulfilled. Extension to the multi-term Kronecker product model is also investigated. 
Both
simulations and ananysis of image examples demonstrate that KoPA can
be superior over the low rank approximations in the sense that it can give
a closer approximation of the original matrix/image with a higher
compression rate.

We conclude with a discussion of future directions. First of all, the Kronecker product model \eqref{eq:model-normalized} is not permutation-invariant. In other words, after a permutation of columns and rows, the signal from the matrix $\bm Y$ may or may not be a Kronecker product. When the columns and rows have an order in nature as in image data and in spatial-temporal data, it is not an issue. But in general, especially when the data is allowed to be shuffled, a pre-processing step for ordering rows and columns should be investigated before conducting KoPA analysis.
Another extension is to consider a multi-term model, where each term
can have its own configuration. This approach certainly allows a
greater flexibility, but also poses new challenges not only on the
configuration and order selections, but also on the estimation and
algorithms as well. 
It would be ideal if a natural
and interpretable procedure for the estimation, order determination,
and configuration selection and be developed, with theoretical guarantees.

\acks{Chen's research is supported
in part by National Science Foundation
grants DMS-1737857, IIS-1741390 and
CCF-1934924. Xiao's research is supported in part by National Science Foundation grant DMS-1454817, and a research grant from NEC Labs America. }

\bibliography{references.bib}
\newpage
\appendix 

\centerline{\bf \Large Appendix}

\bigskip

\section{Proof of Theorem \ref{thm:truth} and Corollary~\ref{corr:truth-random}}
Without loss of generality, we assume $\sigma = 1$. Noticing that
$$\hat\lambda = \|\mathcal R_{m_0, n_0}[\bm Y]\|_S = \|\lambda \vec(\bm A)\vec(\bm B)'+\sigma 2^{-(M+N)/2}\mathcal R_{m_0, n_0}[\bm E]\|_S,$$
by triangular inequality, we have
$$\left| \hat\lambda - \|\lambda \vec(\bm A)\vec(\bm B)'\|_S\right| \leqslant \sigma 2^{-(M+N)/2}\|\mathcal R_{m_0, n_0}[\bm E]\|_S,$$
where $\|\lambda \vec(\bm A)\vec(\bm B)'\|_S=\lambda$. The following bound for $\|\mathcal R_{m_0, n_0}[\bm E]\|_S$ can be obtained using the concentration inequality from \cite{Vershynin2010Introduction},
$$P(\|\mathcal R_{m_0, n_0}[\bm E]\|_S\geqslant 2^{(m_0+n_0)/2} + 2^{(M+N-m_0-n_0)/2} +t)\leqslant e^{-t^2/2}.$$
Therefore, $\|\mathcal R_{m_0, n_0}[\bm E]\|_S = s_0 + O_p(1)$ and 
$$|\hat\lambda - \lambda|\leqslant 2^{-(M+N)/2}(s_0 + O_p(1))=r_0+O_p(2^{-(M+N)/2}),$$
which yields $\hat\lambda - \lambda = O_p(r_0)$.\\
The bounds for $\hat{\bm A}$ and $\hat{\bm B}$ corresponds to the error bounds in estimating the left and right singular vectors of $\mathcal R_{m_0, n_0}[\bm Y]$, which is a direct consequence of the analysis in \cite{wedin1972perturbation} by observing that 
$$\|\hat{\bm A}-\bm A\|_F^2 = \|\vec(\hat{\bm A}) - \vec(\bm A)\|_2^2 = 2\sin^2\Theta(\vec(\hat{\bm A}), \vec(\bm A)).$$
A sharper bound is provided in \cite{cai2018rate}.

% Let $u_1=\vec(\hat{\bm A})\in \mathbb R^{2^{m_0+n_0}}$, $v_1=\vec(\hat{\bm B})\in\mathbb R^{2^{M+N-m_0-n_0}}$ be the leading left and right eigenvectors of $\mathcal R_{m_0, n_0}[\bm Y]$ that satisfy
% $$u_1'[\mathcal R_{m_0, n_0}[\bm Y]]v_1 = \hat\lambda.$$
% According to \eqref{eq:model-normalized}, 
% $$\hat\lambda = \lambda [u_1'\vec(\bm A)][\vec(\bm B)'v_1] + \sigma 2^{-(M+N)/2} u_1'[\mathcal R_{m_0, n_0}[\bm E]]v_1,$$
% where $|2^{-(M+N)/2}u_1'[\mathcal R_{m_0, n_0}[\bm E]]v_1|\leqslant r_0 + O_p(2^{-(M+N)/2})$.  
% Therefore,
% $$[u_1'\vec(\bm A)][\vec(\bm B)'v_1] \geqslant \dfrac{\hat\lambda - r_0 + O_p(2^{-(M+N)/2})}{\lambda}\geqslant  1 - \dfrac{2r_0}{\lambda}+ O_p\left(\dfrac{2^{-(M+N)/2}}{\lambda}\right).$$
% Notice that $\vec(\bm A)$, $\vec(\bm B)$, $u_1$ and $v_1$ are unit vectors. Hence
% $$1\geqslant u_1'\vec(\bm A) \geqslant \dfrac{1 - 2r_0/\lambda - O_p(2^{-(M+N)/2}/\lambda)}{v_1'\vec(\bm B)}\geqslant 1 - \dfrac{2r_0}{\lambda}+ O_p\left(\dfrac{2^{-(M+N)/2}}{\lambda}\right),$$
% which gives $\tr[\hat{\bm A}'\bm A] = u_1'\vec(\bm A) \geqslant 1 - \dfrac{2r_0}{\lambda}+ O_p\left(\dfrac{2^{-(M+N)/2}}{\lambda}\right)$. Therefore, we have
% $$\|\hat{\bm A} - \bm A\|_F^2 = 2 - 2\tr[\hat{\bm A}'\bm A] \leqslant \dfrac{4r_0}{\lambda}+O_p\left(\dfrac{2^{-(M+N)/2}}{\lambda}\right).$$
% A similar result holds for $\hat{\bm B}$.\\
Since above analysis holds for any fixed value of $\lambda$, Corollary~\ref{corr:truth-random} follows immediately.

\section{Proof of Theorem \ref{thm:ic-gap-nonrandom}}
We first show and prove several technical lemmas.
\begin{lem}\label{lem:expect-log}
Suppose $a_n>0, a_n\rightarrow 0$ and $x_n=O_p(1)$ is a sequence of continuous random variables with density functions $p_n$ satisfying 
\begin{enumerate}[label=(\roman*)]
    \item $\mathbb E|x_n|\leqslant C$ for some constant $C$ for every $n$,
    \item $1+a_nx_n>0$ almost surely,
    \item $a_n^{-2}\sup_{x\leqslant -1/(2a_n)}p_n(x)\rightarrow 0$,
\end{enumerate}
then we have
$$\mathbb E\ln\left(1+a_nx_n\right) = O\left(a_n\right).$$
\end{lem}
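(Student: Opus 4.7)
The upper bound is immediate: since condition (ii) forces the support of $x_n$ into $(-1/a_n, \infty)$, we always have $\ln(1+a_n x_n) \leq a_n x_n$, so $\mathbb{E}\ln(1+a_n x_n) \leq a_n \mathbb{E} x_n \leq Ca_n$ by condition (i). The main work is thus to establish a matching lower bound $\mathbb{E}\ln(1+a_n x_n) \geq -O(a_n)$.

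My plan is to split the expectation according to the sign and size of $y_n := a_n x_n$ into three regions: (A) $|y_n| \leq 1/2$, (B) $y_n > 1/2$, and (C) $-1 < y_n \leq -1/2$. On region (A), the elementary bound $|\ln(1+y)| \leq (2\ln 2)|y|$ for $|y| \leq 1/2$ combined with condition (i) gives a contribution of size $O(a_n \mathbb{E}|x_n|) = O(a_n)$. On region (B), $\ln(1+y_n) \geq 0$, so it contributes nonnegatively and thus cannot hurt the lower bound; the earlier upper bound via $\ln(1+y)\leq y$ handles it from above. Both of these pieces are routine.

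Region (C) is the main obstacle, since $\ln(1+y_n)$ is negative and can be arbitrarily large in magnitude as $y_n \downarrow -1$, so a naive pointwise bound fails. Here condition (iii) is essential. I would change variables via $u = 1 + a_n x$, mapping $x \in (-1/a_n, -1/(2a_n)]$ to $u \in (0, 1/2]$, so that
\begin{equation*}
\mathbb{E}\bigl[\ln(1+a_n x_n)\, \mathbf{1}_C\bigr] = \frac{1}{a_n}\int_0^{1/2} \ln(u)\, p_n\!\left(\frac{u-1}{a_n}\right) du.
\end{equation*}
Condition (iii) supplies the uniform bound $p_n((u-1)/a_n) \leq M_n$ with $M_n = o(a_n^2)$ for every $u \in (0, 1/2]$. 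Since $\int_0^{1/2} |\ln u|\, du$ is a finite constant, the whole contribution is bounded in absolute value by $(M_n/a_n)\cdot \int_0^{1/2}|\ln u|\,du = o(a_n)$. Combining the three regions with the trivial upper bound yields $\mathbb{E}\ln(1+a_n x_n) = O(a_n)$, as claimed.
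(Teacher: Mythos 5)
Your proposal is correct and follows essentially the same route as the paper's proof: the positive/moderate part is controlled by a linear (chord) bound on $\ln(1+y)$ together with condition (i), and the dangerous region $x_n \leqslant -1/(2a_n)$ is handled by pulling out $\sup_{x\leqslant -1/(2a_n)}p_n(x)$ and computing $\int_0^{1/2}|\ln u|\,du$ after the substitution $u=1+a_nx$, so that condition (iii) makes this piece $o(a_n)$. The only differences (a three-way rather than two-way split, and the sharp constant $2\ln 2$ in place of the paper's factor $2$) are cosmetic.
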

\begin{proof}
Let $p_n(x_n)$ be the density function of $x_n$. For the positive part, we have
$$E_+ = \int_0^{+\infty}\ln(1+a_nt)p_n(t)dt\leqslant\int_0^{+\infty}a_ntp_n(t)dt\leqslant a_n\mathbb E|x_n|\leqslant Ca_n.$$
For the negative part, we have
\begin{align*}
    E_-&=\int_{-1/a_n}^{0}\ln(1+a_nt)p_n(t)dt\\
    &= \int_{-1/a_n}^{-1/(2a_n)}\ln(1+a_nt)p_n(t)dt+\int_{-1/(2a_n)}^{0}\ln(1+a_nt)p_n(t)dt\\
    &\geqslant \left[\sup_{t\leqslant -1/(2a_n)}p_n(t)\right]\int_{-1/a_n}^{-1/(2a_n)}\ln(1+a_nt)dt + \int_{-1/(2a_n)}^{0}2a_ntp_n(t)dt\\
    &\geqslant -\dfrac{1+\ln 2}{2a_n}\sup_{t<-1/(2a_n)}p_n(t) + 2a_n\int_{-\infty}^0 tp_n(t)dt\\
    % &\geqslant -(1+\ln 2)\sup_{t<-1/(2a_n)}|t|p_n(t) + 2a_n\mathbb E[|x_n|]\\
    &\geqslant o(a_n) - 2Ca_n.
\end{align*}
Hence, 
$$\mathbb E\ln(1+a_nx_n) = E_++E_-=O(a_n).$$
\end{proof}
The conditions in Lemma~\ref{lem:expect-log} are easy to verify in the subsequent proofs. Condition (ii) ensures the logarithm is well-defined on the whole support. Condition (i) is satisfied when $x_n$ converges in mean to a random variable $x$ with finite expectation. Condition (iii) is controlling the left tails of the densities, and is easily fulfilled if they are exponential. %fulfilled when $p_n$ has a monotone tail density function such that $\mathbb E|x_n|<\infty$ implies $t^2p_n(t)\rightarrow 0$ when $t\rightarrow \infty$.

\begin{lem}\label{lem:spectral-norm-bound}
Let $\bm X$ be an arbitrary $P\times Q$ real matrix with $P\leqslant Q$ and $\bm E$ be a $P\times Q$ matrix with IID standard Gaussian entries. Then we have
$$\mathbb E\|\bm X+\bm E\|_S^2\leqslant \|\bm X\|_S^2 + (\sqrt{P}+\sqrt{Q})^2 + 4\|\bm X\|_S\sqrt{P} + \sqrt{2\pi}(\sqrt{P}+\sqrt{Q}) + 2=:U^2.$$
Furthermore, the departure from the expectation is sub-Gaussian such that for any positive $t$, we have
$$P[\|\bm X + \bm E\|_S\geqslant U+t]\leqslant e^{-t^2/2}.$$
\end{lem}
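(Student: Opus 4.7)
The plan is to prove the expectation bound and the sub-Gaussian tail bound separately, both via the Gaussian concentration of $1$-Lipschitz functions of a standard Gaussian matrix, combined with the Gordon/Vershynin first-moment bound $\mathbb{E}\|\bm E\|_S \leq \sqrt{P}+\sqrt{Q}$ already invoked earlier in the paper.

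For the tail bound, I would first observe that the map $\bm E \mapsto \|\bm X+\bm E\|_S$ is $1$-Lipschitz with respect to the Frobenius norm: for any $P\times Q$ matrices $\bm E_1, \bm E_2$,
\begin{equation*}
\bigl|\|\bm X+\bm E_1\|_S - \|\bm X+\bm E_2\|_S\bigr| \leq \|\bm E_1 - \bm E_2\|_S \leq \|\bm E_1 - \bm E_2\|_F.
\end{equation*}
The Borell--TIS Gaussian concentration inequality then gives $P[\|\bm X+\bm E\|_S \geq \mathbb{E}\|\bm X+\bm E\|_S + t] \leq e^{-t^2/2}$. Using the expectation bound established in the next step together with Jensen's inequality, $\mathbb{E}\|\bm X+\bm E\|_S \leq (\mathbb{E}\|\bm X+\bm E\|_S^2)^{1/2} \leq U$, so $\{\|\bm X+\bm E\|_S > U+t\}\subset\{\|\bm X+\bm E\|_S > \mathbb{E}\|\bm X+\bm E\|_S+t\}$ and the desired tail statement $P[\|\bm X+\bm E\|_S \geq U+t] \leq e^{-t^2/2}$ follows.

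For the expectation bound, I would start from the triangle inequality $\|\bm X+\bm E\|_S \leq \|\bm X\|_S + \|\bm E\|_S$, square both sides, and take expectation:
\begin{equation*}
\mathbb{E}\|\bm X+\bm E\|_S^2 \leq \|\bm X\|_S^2 + 2\|\bm X\|_S\cdot\mathbb{E}\|\bm E\|_S + \mathbb{E}\|\bm E\|_S^2.
\end{equation*}
The first moment is handled by Gordon/Vershynin, $\mathbb{E}\|\bm E\|_S \leq B:=\sqrt{P}+\sqrt{Q}$. For the second moment, I would specialize the Lipschitz concentration inequality from step 1 to $\bm X = 0$ to get $P[\|\bm E\|_S > B + s] \leq e^{-s^2/2}$ for $s>0$, and then split the integral $\mathbb{E}\|\bm E\|_S^2 = \int_0^\infty 2t\,P[\|\bm E\|_S > t]\,dt$ at $t=B$, obtaining
\begin{equation*}
\mathbb{E}\|\bm E\|_S^2 \leq B^2 + \int_0^\infty 2(B+s)e^{-s^2/2}\,ds = B^2 + B\sqrt{2\pi} + 2.
\end{equation*}
Combining these bounds with the cross-term estimate $2\|\bm X\|_S\cdot(\sqrt{P}+\sqrt{Q})$ (which under the assumption $P\leq Q$ majorises into the coefficient written in the statement) yields $U^2$.

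There is no real obstacle here; both pieces are standard. The only step that requires a nontrivial ingredient beyond what is already used in the appendix is converting a first-moment bound on $\|\bm E\|_S$ to a second-moment bound, which is why Gaussian concentration must be used twice: once for the tail statement about $\|\bm X+\bm E\|_S$ and once to obtain the $\sqrt{2\pi}B+2$ correction inside $U^2$. Everything else is triangle inequality, Jensen, and an integration by parts.
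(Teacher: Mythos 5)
Your tail-bound argument and your computation of $\mathbb E\|\bm E\|_S^2\leqslant B^2+\sqrt{2\pi}B+2$ with $B=\sqrt{P}+\sqrt{Q}$ match the paper's proof. The gap is in the cross term. Starting from the triangle inequality and squaring gives the cross term $2\|\bm X\|_S\,\mathbb E\|\bm E\|_S\leqslant 2\|\bm X\|_S(\sqrt{P}+\sqrt{Q})$, and you assert that under $P\leqslant Q$ this ``majorises into'' the stated coefficient $4\|\bm X\|_S\sqrt{P}$. The inequality goes the wrong way: $P\leqslant Q$ gives $2(\sqrt{P}+\sqrt{Q})\geqslant 4\sqrt{P}$, with equality only when $P=Q$, so your bound is \emph{weaker} than the one claimed in the lemma and does not imply it. This is not a cosmetic loss: in the application (the bound on $[\hat\lambda^{(m,n)}]^2$ in the proof of Theorem 2) one has $P=2^{m+n}$, $Q=2^{m^\dagger+n^\dagger}$ with $P\leqslant\sqrt{D}\leqslant Q$, and the term $4\|\bm X\|_S\sqrt{P}\,\sigma D^{-1/2}\leqslant 4\lambda\sigma D^{-1/4}$ is negligible precisely because it involves $\sqrt{P}$ and not $\sqrt{Q}$; a cross term of order $\|\bm X\|_S\sqrt{Q}\,\sigma D^{-1/2}$ would be $O(\lambda\sigma)$ and would not vanish, breaking the downstream argument.

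The paper avoids this by not using the triangle inequality at all. It first rotates so that, without loss of generality, $\bm X=[\bm X_1,\bm 0]$ with $\bm X_1$ a $P\times P$ diagonal block (this leaves the distribution of $\bm E$ invariant), then expands
\begin{equation*}
\|\bm X+\bm E\|_S^2=\sup_{\|u\|=1}\bigl(u'\bm X\bm X'u+u'\bm E\bm E'u+2u'\bm X\bm E'u\bigr)
\leqslant \|\bm X\|_S^2+\|\bm E\|_S^2+2\|\bm X\|_S\|\bm E_1\|_S,
\end{equation*}
where the key observation is that $\bm X\bm E'=\bm X_1\bm E_1'$ depends only on the $P\times P$ corner $\bm E_1$ of $\bm E$, for which $\mathbb E\|\bm E_1\|_S\leqslant 2\sqrt{P}$. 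This is where the coefficient $4\|\bm X\|_S\sqrt{P}$ comes from, and it is the one genuinely nontrivial idea in the lemma; your proposal is missing it.
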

\begin{proof}
Without loss of generality, we assume $\bm X = [\bm X_1, \bm X_2]$, where $\bm X_1\in\mathbb R^{P\times P}$ is a diagonal matrix and $\bm X_2\in\mathbb R^{P\times(Q-P)}$ is zero. Such a form of $\bm X$ can always be achieved by multiplying $\bm X$ and $\bm E$ from left and right by orthogonal matrices, without changing the distribution of $\bm E$. Similarly, we partition $\bm E$ into $[\bm E_1, \bm E_2]$ with $\bm E_1\in\mathbb R^{P\times P}$ and $\bm E_2\in\mathbb R^{P\times (Q-P)}$. Then
\begin{align*}
    \|\bm X+\bm E\|_S^2 &=\sup_{u\in\mathbb R^{P}, \|u\|=1} u'(\bm X+\bm E)(\bm X+\bm E)'u\\
    &=\sup_{u\in\mathbb R^{P}, \|u\|=1} u'\bm X\bm X'u + u'\bm E\bm Eu + 2u'\bm X\bm E'u\\
    &\leqslant \|\bm X\|_S^2 + \|\bm E\|_S^2 + 2\|\bm X\|_S\|\bm E_1\|_S
\end{align*}
According to \cite{Vershynin2010Introduction}, we have $\mathbb E\|\bm E_1\|_S\leqslant 2\sqrt{P}$ and $$P[\|\bm E\|_S \geqslant \sqrt{P}+\sqrt{Q} + t]\leqslant e^{-t^2/2}.$$
Therefore, 
$$\mathbb E\|\bm E\|_S^2 = \int_{t=0}^\infty P[\|\bm E\|_S > t]2tdt \leqslant (\sqrt{P}+\sqrt{Q})^2 + \sqrt{2\pi}(\sqrt{P}+\sqrt{Q})+2.$$
Hence, we have
$$\mathbb E\|\bm X+\bm E\|_S^2\leqslant \|\bm X\|_S^2 + (\sqrt{P}+\sqrt{Q})^2 + 4\|\bm X\|_S\sqrt{P} + \sqrt{2\pi}(\sqrt{P}+\sqrt{Q}) + 2=:U^2.$$
Since for any fixed $\bm X$, $\|\bm X+\bm E\|_S$ is a function of $\bm E$ with Lipschitz norm 1, by concentration inequality, for any positive $t$, we have
$$P[\|\bm X +\bm E\|_S\geqslant U + t]\leqslant e^{-t^2/2}.$$
\end{proof}

We rewrite the information criterion as
$$\mathrm{IC}_\kappa(m, n) = D\left[\ln\|\bm Y - \hat{\bm Y}^{(m, n)}\|_F^2+\kappa r_{m, n}^2 - 2\kappa D^{-1/2}\right],$$
where $D=2^{M+N}$ and $r_{m, n} = 2^{-(m+n)/2}+2^{-(m^\dagger+n^\dagger)/2}$. The constant term $2\kappa D^{-1/2}$ is irrelevant to the configuration $(m, n)$ and is therefore ignored in subsequent proofs. Without loss of generality, we define the following expected information criterion
$$\mathrm{EIC}_\kappa(m, n) = D\left[\mathbb E\ln\|\bm Y - \hat{\bm Y}^{(m, n)}\|_F^2+\kappa r_{m, n}^2\right]$$
for simplicity. The difference in expected information criterion between wrong configurations and the true configuration is of central interest, so we define
$$\Delta \mathrm{EIC}_\kappa(m, n) = \mathrm{EIC}_\kappa(m, n) - \mathrm{EIC}_\kappa(m_0, n_0) $$

Under the true configuration $(m_0, n_0)$, we have
$$\mathbb E\|\bm Y - \hat{\bm Y}^{(m, n)}\|_F^2\leqslant \mathbb E\|\bm Y - \lambda \bm A\otimes\bm B\|_F^2=\sigma^2D^{-1}\mathbb E\|\bm E\|_F^2=\sigma^2.$$ Therefore, we have
\begin{equation}
    \mathrm{EIC}_\kappa (m_0, n_0) \leqslant D\left[\ln \mathbb E\|\bm Y - \hat{\bm Y}^{(m, n)}\|_F^2 + \kappa r_0^2\right]\leqslant D\left[\ln\sigma^2+\kappa r_0^2\right],\label{eq:proof-ic-true-config}
\end{equation}
where $r_0 = r_{m_0, n_0}$. 

Define
\begin{equation}
    \hat\lambda^{(m, n)}:=\|\mathcal R_{m, n}[\bm Y]\|_S = \|\lambda \mathcal R_{m, n}[\bm A\otimes \bm B] + \sigma D^{-1/2}\mathcal R_{m,n}[\bm E]\|_S. \label{eq:proof-lambda-hat}
\end{equation}
To calculate the information criterion for wrong configurations, we use the following equality
$$\|\bm Y -\hat{\bm Y}^{(m, n)}\|_F^2 = \|\bm Y\|_F^2 - \left[\hat\lambda^{(m, n)}\right]^2. $$
Notice that 
$$\|\bm Y\|_F^2 = \|\lambda\bm A\otimes \bm B\|_F^2 + \sigma^2D^{-1}\|\bm E\|_F^2 + 2 \lambda\sigma D^{-1/2}\tr[(\bm A\otimes \bm B)\bm E'],$$
where $\|\lambda\bm A\otimes \bm B\|_F^2 = \lambda^2$, $\sigma^2D^{-1}\|\bm E\|_F^2= \sigma^2(1+O_p(D^{-1/2}))$ and $\tr[(\bm A\otimes \bm B)\bm E']$ follows a standard normal distribution. We have
\begin{equation}
    \|\bm Y\|_F^2 = \lambda^2 + \sigma^2 + R_1,\label{eq:proof-y2}
\end{equation}
where 
$$R_1 = O_p\left((\sigma^2+\lambda\sigma)D^{-1/2}\right).$$

For wrong configurations $(m, n)\in\mathcal W$, without loss of generality, we assume $m+n\leqslant (M+N)/2$. 
According to Lemma~\ref{lem:spectral-norm-bound}, we have the upper bound for \eqref{eq:proof-lambda-hat}:
\begin{align}
[\hat\lambda^{(m,n)}]^2 &\leqslant \lambda^2\phi^2+\sigma^2r_{m,n}^2 + 4\lambda\phi\sigma 2^{(m+n)/2}D^{-1/2}+O_p((\lambda\sigma + \sigma^2)D^{-1/2})\nonumber\\
&\leqslant \lambda^2\phi^2+\sigma^2r_{m,n}^2 + 4\lambda\sigma D^{-1/4}+O_p((\lambda\sigma + \sigma^2)D^{-1/2}). \label{eq:proof-lambda-hat-bound}
\end{align}
Hence,
$$\|\bm Y - \hat{\bm Y}^{(m,n)}\|_S^2\geqslant \lambda^2(1-\phi^2) + \sigma^2(1-r_{m,n}^2) - 4\lambda\sigma D^{-1/4} + O_p((\lambda\sigma +\sigma^2)D^{-1/2}).$$
The last two terms are minor terms by Assumption~\ref{assump:gap}. Therefore,
\begin{equation}
    \mathrm{EIC}_\kappa(m,n)\geqslant D\left[\ln(\lambda^2\psi^2 + \sigma^2(1-r_{m,n}^2)) -  O\left(\dfrac{\lambda\sigma D^{-1/4}}{\sigma^2 + \lambda^2\psi^2}\right)+\kappa r_{m,n}^2\right].\label{eq:proof-eic-wrong}
\end{equation}
Here Lemma~\ref{lem:expect-log} is applied since the stochastic term in \eqref{eq:proof-lambda-hat-bound} has an exponential tail bound. \\
Notice that $\mathrm{EIC}_\kappa(m,n)$ in \eqref{eq:proof-eic-wrong} is either a monotone increasing function or a uni-modal function of $r_{m,n}^2$ on $[1/2, 4D^{1/2}]$. Therefore, the minimum of the right hand side of \eqref{eq:proof-eic-wrong} is obtained on the boundary. When $r_{m,n}^2 = 1/2$, \eqref{eq:proof-eic-wrong} becomes
\begin{equation}
    \mathrm{EIC}_\kappa(m,n)\geqslant D\left[\ln(\lambda^2\psi^2 + \sigma^2/2) -  O\left(\dfrac{\lambda\sigma D^{-1/4}}{\sigma^2 + \lambda^2\psi^2}\right)+\kappa/2\right].\label{eq:proof-eic-corner}
\end{equation}
When $r_{m,n}^2 = 4D^{-1/2}$, \eqref{eq:proof-eic-wrong} becomes
\begin{equation}
    \mathrm{EIC}_\kappa(m,n)\geqslant D\left[\ln(\lambda^2\psi^2 + \sigma^2) -  O\left(\dfrac{\lambda\sigma D^{-1/4}}{\sigma^2 + \lambda^2\psi^2}\right)\right].\label{eq:proof-eic-center}
\end{equation}
In conclusion, for any wrong configuration $(m, n)\in\mathcal W$, we have
\begin{equation}
    \Delta\mathrm{EIC}_\kappa(m,n)\geqslant D\left[\alpha - O\left(\dfrac{\lambda\sigma D^{-1/4}}{\sigma^2 + \lambda^2\psi^2}\right) - \kappa r_0^2,\right]\label{eq:proof-deic}
\end{equation}
where 
$$\alpha = \left[\ln\left(1+\dfrac{\lambda^2\psi^2}{\sigma^2}\right)\right]\wedge\left[\ln\left(\dfrac{1}{2}+\dfrac{ \lambda^2\psi^2}{\sigma^2}\right)+\dfrac{\kappa}{2}\right].$$
When $\kappa \geqslant 2\ln 2$, $\alpha$ takes the first value in the preceding equation. The assumptions imposed in Theorem~\ref{thm:ic-gap-nonrandom} ensure the leading term $\alpha$ in \eqref{eq:proof-deic} dominates other terms so that the minimum of $\Delta\mathrm{EIC}$ over the wrong configurations is strictly positive.

\medskip
We now address Remark~6. It turns out possible to use only the MSE to select the configuration, which corresponds to $\kappa=0$. It requires a stronger signal-to-noise ratio $\lambda^2\psi^2/\sigma^2>1/2$ so that the leading term $\alpha$ in \eqref{eq:proof-deic} is positive, and hence Theorem~\ref{thm:ic-gap-nonrandom} continues to hold. % and with a at least $O(1)$ signal-to-noise ratio, the second term in \eqref{eq:proof-deic} is negligible as well.

\medskip
\stepcounter{remark}
\noindent {\bf Remark \arabic{remark}.} 
Note that the upper bound used in \eqref{eq:proof-lambda-hat-bound} is quite conservative, because the maximums of $\phi$ and $2^{(m+n)/2}$ over $\mathcal W$ are taken separately. It leads to a simple form of Assumption~\ref{assump:gap}, which is actually not as optimal as possible. 
%A less conservative analysis on \eqref{eq:proof-lambda-hat-bound} can be taken by considering the point-wise condition. Specifically, 
If we define $\phi^{(m,n)} = \|\mathcal R_{m,n}[\bm A\otimes \bm B]\|_S$, %as the point-wise version of $\phi$. 
then the condition \eqref{eq:gap2} in Assumption~\ref{assump:gap} can be relaxed to 
$$\lim_{\asymptotic}\inf_{(m,n)\in\mathcal W} (2^{(m+n)/2}+2^{(m^\dagger+n^\dagger)/2})\cdot \dfrac{\lambda}{\sigma}\cdot \dfrac{1-[\phi^{(m,n)}]^2}{\phi^{(m,n)}}=\infty.$$
%All subsequent analysis follows immediately in a similar way.
However, in the main text we choose to introduce the concept of representation gap and present a simple version of Assumption~\ref{assump:gap}.

\section{Proof of Theorem~\ref{thm:consistency_nonrandom}}
We begin with the tail bounds for $\|\bm E\|_F^2$. 
According to the tail bounds for $\chi^2$ random variable given in \cite{Laurent2000adaptive}, it holds that for any $t > 0$,
\begin{align}
    P\left[D^{-1}\|\bm E\|_F^2 > 1 + \sqrt{2}D^{-1/2}t + D^{-1}t^2\right]&\leqslant e^{-t^2/2},\label{eq:proof-E2-upper-tail}\\
    P\left[D^{-1}\|\bm E\|_F^2 < 1 - \sqrt{2}D^{-1/2}t\right]&\leqslant e^{-t^2/2},\label{eq:proof-E2-lower-tail}
\end{align}
where $D=2^{M+N}$. Therefore, at the true configuration $(m_0, n_0)$, we have
\begin{align}
    &P\left[\|\bm Y - \hat{\bm Y}^{(m_0, n_0)}\|_F^2 > \sigma^2 + \sqrt{2}\sigma^2D^{-1/2}t+ \sigma^2 D^{-1}t^2\right]\nonumber\\
    \leqslant & P\left[\|\sigma D^{-1/2}\bm E\|_F^2 > \sigma^2 + \sqrt{2}\sigma^2D^{-1/2}t+ \sigma^2D^{-1} t^2\right]\nonumber\\
    \leqslant & e^{-t^2/2}.\label{eq:proof-True-Conf-upper-tail}
\end{align}
Noticing that
$$\|\bm Y\|_F^2 = \lambda^2 + \sigma^2D^{-1}\|\bm E\|_F^2 + 2\lambda\sigma D^{-1/2}Z,$$
where $Z = \tr[(\bm A\otimes \bm B)\bm E']$ is a standard Gaussian random variable,
%a lower tail bound for $\|\bm Y\|_F^2$ is calculated using 
by \eqref{eq:proof-E2-lower-tail} we have 
\begin{equation}
    P\left[\|\bm Y\|_F^2 < \lambda^2 + \sigma^2 - (\sqrt{2}\sigma^2 + 2\lambda\sigma)D^{-1/2}t\right]\leqslant 2e^{-t^2/2}.\label{eq:proof-Y2-lower-tail}
\end{equation}

Now we consider the tail bound for $\hat\lambda^{(m, n)}$ of wrong configurations. According to Lemma~\ref{lem:spectral-norm-bound}, we have the tail bound for $\hat\lambda^{(m,n)}$ as
\begin{equation}
P[\hat\lambda^{(m,n)}\geqslant U + \sigma D^{-1/2}t]\leqslant e^{-t^2/2},\label{eq:proof-lambdahat-tail}
\end{equation}
where
$$U^2 = \lambda^2\phi^2+\sigma^2r_{m,n}^2 +4\lambda\phi\sigma 2^{(m+n)/2}D^{-1/2} + \sqrt{2\pi} \sigma^2r_{m,n}D^{-1/2} +2\sigma^2D^{-1}<(\lambda+\sigma)^2.$$
Let $\alpha=\ln(1+(\lambda/\sigma)^2\psi^2)$ be the positive gap constant. We have
\begin{align}
    &P\left[\mathrm{IC}_\kappa (m_0, n_0) > \mathrm{EIC}_\kappa(m_0, n_0) +  D\alpha / 3\right]\nonumber\\
    =& P\left[\|\bm Y - \hat{\bm Y}^{(m_0, n_0)}\|_F^2 > \sigma^2 e^{\alpha/3}\right]\nonumber\\
    \leqslant & \exp\left(-c_1^2D/2\right),\label{eq:proof-prob-ic-true}
\end{align}
where 
$$c_1^2 = e^{\alpha/3} - 1.$$
For any $(m, n)\in\mathcal W$, it holds that
\begin{align}
    & P\left[\mathrm{IC}_\kappa (m, n) < \mathrm{EIC}_\kappa(m_0, n_0) +  D\alpha / 3 \right]\nonumber\\
    \leqslant & P\left[\mathrm{IC}_\kappa (m, n) < \mathrm{EIC}_\kappa(m, n) -  D\alpha / 3 \right]\nonumber\\
    \leqslant & P\left[\|\bm Y\|_F^2 - \hat\lambda^2 < \lambda^2 + \sigma^2 - \lambda^2\phi^2 - 2h\right]\nonumber\\
    \leqslant & P\left[\|\bm Y\|_F^2 < \lambda^2 + \sigma^2 - h\right] + P\left[\hat\lambda^2 > U^2 + h\right]\nonumber\\
    \leqslant & 2\exp\left(-c_2^2D/2\right) + \exp\left(-c_3^2D/2\right)\label{eq:proof-prob-ic-W}
\end{align}
where we use \eqref{eq:proof-Y2-lower-tail} and \eqref{eq:proof-lambdahat-tail} to obtain \eqref{eq:proof-prob-ic-W},
$$h = \dfrac{1}{2}\left(1-e^{-\alpha/3}\right)(\lambda^2(1-\phi^2) +\sigma^2),\quad c_2 = \dfrac{h}{\sqrt{2}\sigma^2 + 2\lambda\sigma},$$
and $c_3$ is the solution of 
$$\sigma^2c_3^2 + 2(\lambda+\sigma)\sigma c_3 = h.$$

We conclude that
\begin{align}
    &P\left[\mathrm{IC}_\kappa(m_0, n_0) \geqslant \min_{(m, n)\in\mathcal W}\mathrm{IC}_\kappa(m, n)\right]\nonumber\\
    \leqslant &\sum_{(m, n)\in\mathcal W}P[\mathrm{IC}_\kappa(m_0, n_0) \geqslant \mathrm{IC}_\kappa(m, n)]\nonumber\\
    \leqslant & \sum_{(m, n)\in\mathcal W}\bigg(P[\mathrm{IC}_\kappa(m_0, n_0) \geqslant \mathrm{EIC}_\kappa(m_0, n_0) +  D\alpha / 3]\nonumber\\
    &\hspace{4em} +P[\mathrm{IC}_\kappa(m, n) \leqslant \mathrm{EIC}_\kappa(m_0, n_0) +  D\alpha / 3]\bigg)\nonumber\\
    \leqslant & 4(M+1)(N+1)\exp\left[-c^2D/2\right] \rightarrow 0,\label{eq: proof-prob-final}
\end{align}
where $c = \min\{c_1, c_2, c_3\}$. By calculating the orders of $c_1$, $c_2$, $c_3$, it holds that
$$c^2\geqslant O\left((e^{\alpha/3}-1)\wedge \left(\dfrac{e^\alpha - e^{2\alpha/3}}{1+\lambda/\sigma}\right)^2\right).$$
Specifically, if $\alpha\rightarrow 0$ (or equivalently, $(\lambda/\sigma)^2\psi^2\rightarrow 0$), we have
$$c^2\geqslant O\left(\dfrac{\lambda^2}{\sigma^2}\psi^2\wedge \dfrac{(\lambda^2/\sigma^2)^2}{(1+\lambda/\sigma)^2}\psi^4\right)$$
The right hand side is much greater than $\ln (MN)$, under Assumptions~\ref{assump:hd} and \ref{assump:gap}.

\section{Proof of Theorem~\ref{thm:ic-gap}}
The proof is very similar to the proofs of  Theorem~\ref{thm:ic-gap-nonrandom} and Theorem~\ref{thm:consistency_nonrandom}, so we only point out the major steps, but omit the details. Condition \eqref{eq:gap-random-tail-cond} implies that $\lambda^2 = \lambda_0^2(1+o_p(1))$ and $\psi^2 = \psi_0^2(1+o_p(1))$. The proof of Theorem~\ref{thm:ic-gap-nonrandom} follows immediately by replacing $\lambda^2$ and $\psi^2$ with the deterministic values $\lambda_0^2$ and $\psi_0^2$, except that an $o_p(\lambda_0^2+\psi_0^2)$ term is added to \eqref{eq:proof-lambda-hat}. Since the additional stochastic term is negligible and has finite expectation,  Theorem~\ref{thm:ic-gap-nonrandom} continues to hold.

The consistency follows same lines as those of Theorem~\ref{thm:consistency_nonrandom} except that the deviations $\lambda^2-\lambda_0^2$ and $\psi^2-\psi_0^2$ should be incorporated into \eqref{eq: proof-prob-final}. Specifically, Assumption~\ref{assump:gap_random} implies that for any small constant $\delta$, with probability larger than $1-o(1/(MN))$, we have $\lambda^2 \geqslant \lambda_0^2(1-\delta)$ and $\psi^2\geqslant \psi_0^2(1-\delta)$. Proof of Theorem~\ref{thm:consistency_nonrandom} follows immediately by replacing $\lambda^2$ and $\psi^2$ with $\lambda_0^2(1-\delta)$ and $\psi_0^2(1-\delta)$. The following probability of exceptions should be added to \eqref{eq: proof-prob-final}. 
$$(M+1)(N+1)\left[P[\lambda^2<\lambda_0^2(1-\delta)] + P[\psi^2<\psi_0^2(1-\delta)]\right]=o(1),$$
which does not affect consistency but may reduce the convergence rate. 

% Condition \eqref{eq:gap-random-tail-cond} implies that $\lambda^2 = \lambda_0^2(1 + o_p(1/(MN))$ and $\psi^2=\psi_0^2(1+o_p(1/(MN))$, where the deviations are of smaller orders and won't affect the consistency.

\section{Proof of Lemma \ref{lem:further-decomposition} and Corollary~\ref{corr:further-decomp-random}}
Consider the complete Kronecker product decomposition of $\bm A$ with respect to the configuration $(m\wedge m', n\wedge n',(m-m')_+,(n-n')_+)$:
\begin{equation}
   \bm A = \sum_{i=1}^I\mu_i \bm C_i\otimes \bm D_i, \label{eq: decomp_a}
\end{equation}
where $I = 2^{m\wedge m' + n\wedge n'}\wedge 2^{(m-m')_++(n-n')_+}$, $\mu_1\geqslant \mu_2\geqslant \dots \geqslant \mu_I $ are the coefficients in decreasing order. $\bm C_i$ and $\bm D_i$ satisfy
\begin{equation}
\langle \bm C_i, \bm C_j\rangle = \langle \bm D_i, \bm D_j\rangle = \delta_{i,j },\label{eq: orthonormal_cond_CD}
\end{equation}
where $\delta_{i, j}$ is the Kronecker delta function such that $\delta_{i, j}=1$ if and only if $i=j$ and $\delta_{i, j} = 0$ otherwise, and $\langle \bm A, \bm B\rangle := \tr[\bm A'\bm B]$ is the trace inner product. Notice that the decomposition in (\ref{eq: decomp_a}) corresponds to the singular value decomposition for $\mathcal R_{m\wedge m', n\wedge n'}[\bm A]$. Therefore, the singular values $\mu_1,\dots, \mu_I$ are uniquely identifiable and the components $\bm C_i$, $\bm D_i$ are identifiable if the singular values are distinct. In particular, 
$$\mu_1 = \|\mathcal R_{m\wedge m', n\wedge n'}[\bm A]\|_S.$$
Similarly, the KPD of $\bm B$ with the configuration $((m'-m)_+, (n'-n)_+,M-m\vee m',N-n\vee n')$ is given by
$$\bm B = \sum_{j=1}^J\nu_j \bm F_j\otimes \bm G_j,$$
where $J = 2^{(m'-m)_++(n'-n)_+}\wedge 2^{M+N-m\vee m'-n\vee n'}$ and 
$$\nu_1 = \|\mathcal R_{(m'-m)_+, (n'-n)_+}[\bm B]\|_S.$$
With the two KPD of $\bm A$ and $\bm B$, we can rewrite $\bm A\otimes \bm B$ as
$$\bm A\otimes \bm B = \left(\sum_{i=1}^I\mu_i\bm C_i\otimes \bm D_i\right)\otimes \left(\sum_{j=1}^J\nu_j\bm F_j\otimes \bm G_j\right) = \sum_{i=1}^I\sum_{j=1}^J \mu_i\nu_j\bm C_i\otimes \bm D_i\otimes \bm F_j\otimes \bm G_j.$$
Notice that the Kronecker product satisfies distributive law and associative law.
The matrix $\bm D_i$ is $2^{(m-m')_+}\times 2^{(n-n')_+}$ and the matrix $\bm F_j$ is $2^{(m'-m)_+}\times 2^{(n'-n)_+}$. For all possible values of $m, m', n, n'$, either one of $\bm D_i$ and $\bm F_j$ is a scalar, or they are both vectors; and for both cases $\bm D_i\otimes \bm F_j = \bm F_j\otimes \bm D_i$. Therefore, 
\begin{equation}
\bm A\otimes \bm B = \sum_{i=1}^I\sum_{j=1}^J \mu_i\nu_j\bm C_i\otimes \bm F_j\otimes \bm D_i\otimes \bm G_j = \sum_{i=1}^I\sum_{j=1}^J \mu_i\nu_j \bm P_{ij}\otimes \bm Q_{ij},\label{eq: decompose_ab}
\end{equation}
where 
$$\bm P_{ij} := \bm C_i\otimes \bm F_j,\quad \bm Q_{ij}:=\bm D_i\otimes \bm G_j.$$
Notice that $\bm P_{ij}$ is a $2^{m'}\times 2^{n'}$ matrix and $\bm Q_{ij}$ is a $2^{M-m'}\times 2^{N-n'}$ matrix.  Therefore, (\ref{eq: decompose_ab}) is a KPD of $\bm A \otimes \bm B$ indexed by $(i, j)$ with respect to the Kronecker configuration $(m',n',M-m',N-n')$ as long as $\bm P_{ij}$ and $\bm Q_{ij}$ satisfy the orthonormal condition in (\ref{eq: orthonormal_cond_CD}). In fact, 
\begin{align*}
    \langle \bm P_{ij}, \bm P_{kl}\rangle &= \tr[\bm P_{ij}'\bm P_{kl}]\\
    &=\tr[(\bm C_i\otimes \bm F_j)'(\bm D_k\otimes \bm G_l)]\\
    &=\tr[(\bm C_i'\bm D_k)\otimes (\bm F_j'\bm G_l)]\\
    &=\tr[\bm C_i'\bm D_k] \tr[\bm F_j'\bm G_l]\\
    &=\delta_{i,j}\delta_{k,l},
\end{align*}
and similar results hold for $\bm Q_{ij}$. It follows that
$$\|\mathcal R_{m', n'}[\bm A\otimes \bm B]\|_S = \max_{i,j} \mu_i\nu_j = \mu_1\nu_1 = \|\mathcal R_{m\wedge m', n\wedge n'}[\bm A]\|_S\cdot\|\mathcal R_{(m'-m)_+, (n'-n)_+}[\bm B]\|_S,$$
and the proof of Lemma~\ref{lem:further-decomposition} is complete.

\medskip
Now we consider Corollary~\ref{corr:further-decomp-random}.
When $\bm A$ and $\bm B$ are generated as in Example~\ref{example:normal}, we have
\begin{align*}
    \|\mathcal R_{m\wedge m', n\wedge n'}[\tilde{\bm A}]\|_S &\leqslant 2^{(m\wedge m'+n\wedge n')/2} + 2^{((m-m')_++(n-n')_+)/2} + O_p(1),\\
    \|\mathcal R_{(m'-m)_+, (n'-n)_+}[\tilde{\bm B}]\|_S&\leqslant 2^{((m'-m)_++ (n'-n)_+)/2} + 2^{(M+N-m\vee m'-n\vee n')/2}+O_p(1),\\
    \|\tilde{\bm A}\|_F\|\tilde{\bm B}\|_F &=2^{(M+N)/2}(1 + O_p(r_0)).
\end{align*}
Hence,
\begin{multline*}
    \|\mathcal R_{m', n'}[\bm A\otimes \bm B]\|_S=\dfrac{\|\mathcal R_{m', n'}[\tilde{\bm A}\otimes \tilde{\bm B}]\|_S}{\|\tilde{\bm A}\|_F\|\tilde{\bm B}\|_F}\leqslant 2^{-(m'+n')/2} + 2^{-(M+N-m'-n')/2}\\
    +2^{-(|m-m'|+|n-n'|)/2} + 2^{-(M+N-|m-m'|-|n-n'|)/2} + o_p(1).
\end{multline*}
The maximum of the right hand side is obtained when $|m-m'|+|n-n'|=1$, or $m'+n'\in\{1, M+N-1\}$, for which
$$\|\mathcal R_{m', n'}[\bm A\otimes \bm B]\|_S\leqslant 1/\sqrt{2} + o_p(1).$$
Furthermore, it is straightforward to verify that the upper bound is attained when $m'+n'\in\{1, M+N-1\}$, which leads to Corollary~\ref{corr:further-decomp-random}.

\section{Proof of Lemma \ref{lem:norm-of-sum}}
We first prove the following technical lemma.

\begin{lem}
Let $U$, $V$ be two vector subspaces of $\mathbb R^n$ with $\Theta(U, V) = \theta\in [0, \pi/2]$, where $\Theta(U,V)$ denotes the smallest principal angle between $U$ and $V$.
Suppose $w\in\mathbb R^n$ is a unit vector and 
$$\|P_Uw\|=\cos\alpha,$$
for some $\alpha\in[0,\pi/2]$, where $P_U$ denotes the orthogonal projection to the space $U$. Then it holds that
$$\|P_Vw\|\leqslant \begin{cases}
\cos(\theta-\alpha) & \text{if }\alpha \leqslant \theta, \\
1 & \text{if } \alpha > \theta.
\end{cases}$$
\end{lem}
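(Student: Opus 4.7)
My plan is to pick the unit vector in $V$ closest to $w$ and parametrize by the angle it makes with $U$. I will choose $v^{*}\in V$ of unit norm with $\|P_V w\|=\langle w,v^{*}\rangle$ (such a vector exists by compactness of the unit sphere in $V$), decompose $w=w_U+w_{U^\perp}$ with $w_U=P_U w$ and $w_{U^\perp}=P_{U^\perp}w$ so that $\|w_U\|=\cos\alpha$ and $\|w_{U^\perp}\|=\sin\alpha$, and introduce $\beta=\angle(v^{*},U)\in[0,\pi/2]$ so that $\|P_U v^{*}\|=\cos\beta$ and $\|P_{U^\perp}v^{*}\|=\sin\beta$. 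From the very definition of the smallest principal angle, $\max_{v\in V,\,\|v\|=1}\|P_U v\|=\cos\theta$, which forces $\beta\geqslant\theta$.

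The central step will be to apply Cauchy--Schwarz separately inside $U$ and inside $U^\perp$. Since $w_U\in U$ and $w_{U^\perp}\in U^\perp$, I can split
\begin{align*}
\langle w,v^{*}\rangle
&=\langle w_U,P_U v^{*}\rangle+\langle w_{U^\perp},P_{U^\perp}v^{*}\rangle\\
&\leqslant\|w_U\|\,\|P_U v^{*}\|+\|w_{U^\perp}\|\,\|P_{U^\perp}v^{*}\|\\
&=\cos\alpha\cos\beta+\sin\alpha\sin\beta=\cos(\beta-\alpha),
\end{align*}
so that $\|P_V w\|\leqslant\cos(\beta-\alpha)$ subject only to the constraint $\beta\in[\theta,\pi/2]$.

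All that remains is to optimize the upper bound over the admissible range of $\beta$. If $\alpha\leqslant\theta$ then $\beta-\alpha\geqslant\theta-\alpha\geqslant 0$, and since cosine is decreasing on $[0,\pi/2]$ the right-hand side attains its maximum at $\beta=\theta$, producing the bound $\cos(\theta-\alpha)$. If $\alpha>\theta$ then $\beta=\alpha$ itself lies in the admissible interval $[\theta,\pi/2]$, making $\cos(\beta-\alpha)=1$ and recovering the trivial bound. The only delicate point is justifying $\beta\geqslant\theta$ from the definition of the smallest principal angle; once that is nailed down, the rest is elementary trigonometry.
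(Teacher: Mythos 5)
Your proposal is correct and follows essentially the same route as the paper's proof: both reduce the problem to the two--dimensional identity $\cos\alpha\cos\beta+\sin\alpha\sin\beta=\cos(\beta-\alpha)$ via a Cauchy--Schwarz split of $\langle w,v\rangle$ into a component along (the projection onto) $U$ and a component in $U^{\perp}$, and then invoke the definition of the smallest principal angle to bound the relevant angle below by $\theta$. The only cosmetic difference is that the paper works with the angle between an arbitrary unit $v\in V$ and the single direction $u_1=P_Uw/\|P_Uw\|$ inside an orthonormal basis, whereas you fix the maximizer $v^{*}=P_Vw/\|P_Vw\|$ and use the angle between $v^{*}$ and the whole subspace $U$; both choices are justified by the same variational characterization of $\theta$ and lead to the same final bound.
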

\begin{proof}
Let 
$$u = \dfrac{P_Uw}{\|P_Uw\|},$$
then $\|u\|=1$ and $u\in U$. Let $\{u_1, u_2,\dots, u_n\}$ be an orthogonal basis of $\mathbb R^n$ such that $u_1=u$. For any vector $v \in V$, we have
\begin{align*}
    v'w &= v'\left(\sum_{i=1}^nu_iu_i'\right)w\\
    &=  v'u_1u_1'w +\sum_{i=2}^nv'u_iu_i'w\\
    &\leqslant v'u_1u_1'w +\sqrt{\sum_{i=2}^nv'u_i}\sqrt{\sum_{i=2}^nu_i'w}\\
    &= \cos \eta\cos\alpha + \sin \eta\sin\alpha\\
    &= \cos(\eta-\alpha),
\end{align*}
where $v'u_1=\cos\eta$.  The proof is complete by noting that $\cos\eta =v'u_1\leqslant \cos\theta$.
\end{proof}

We now prove Lemma \ref{lem:norm-of-sum}.

\noindent {\it Proof of Lemma \ref{lem:norm-of-sum}.}
%Suppose $\bm M_1$ and $\bm M_2$ are $m\times n$ matrices. To avoid confusion, the notation $m, n$ is only used for this proof and is not related to the configuration $(m, n)$ used in the main text. 
Recall that $\bm M_1$ and $\bm M_2$ are of the same dimension. We consider the maximization of $\|(\bm M_1+\bm M_2)u\|^2$ over all unit vectors $u$. First write
\begin{align*}
    \|(\bm M_1+\bm M_2)u\|^2 &= \|\bm M_1u + \bm M_2u\|^2\\
    &= \|\bm M_1P_{\bm M_1'}u + \bm M_2P_{\bm M_2'}u\|^2\\
    &= \|\bm M_1P_{\bm M_1'}u\|^2 + \|\bm M_2P_{\bm M_2'}u\|^2 + 2(\bm M_1P_{\bm M_1'}u)'\bm M_2P_{\bm M_2'}u,
\end{align*}
where $P_{\bm M}$ denotes the projection matrix to the column space of $\bm M$.
Since $\|\bm M_1\|_S=\mu$ and $\|\bm M_2\|_S=\nu$, we have
$$\|\bm M_1P_{\bm M_1'}u\|^2\leqslant \mu^2\|P_{\bm M_1'}u\|^2\quad\text{and}\quad \|\bm M_2P_{\bm M_2'}u\|^2\leqslant \nu^2 \|P_{\bm M_2'}u\|^2.$$
Since $\bm M_1P_{\bm M_1'}u\in \mathrm{span}(\bm M_1)$ and $\bm M_2P_{\bm M_2'}u\in \mathrm{span}(\bm M_2)$, it holds that
$$(\bm M_1P_{\bm M_1'}u)'\bm M_2P_{\bm M_2'}u\leqslant \cos\theta \mu\nu\|P_{\bm M_1'}u\|\|P_{\bm M_2'}u\|.$$
It follows that
$$\|(\bm M_1+\bm M_2)u\|^2\leqslant \mu^2\|P_{\bm M_1'}u\|^2 + \nu^2 \|P_{\bm M_2'}u\|^2 + 2\mu\nu\|P_{\bm M_1'}u\|\|P_{\bm M_2'}u\|\cos\theta.$$
Suppose $\|P_{\bm M_1'}u\| = \cos\alpha$ for some $\alpha\in[0,\pi/2]$. If $\alpha > \eta$, then $\|P_{\bm M_2'}u\|\leqslant 1$. The right hand side of the preceding inequality attains its maximum when $\|P_{\bm M_1'}u\|=\cos\eta$ and $\|P_{\bm M_2'}u\|=1$. Hence, we only consider the case $\alpha \leqslant \eta$, which implies that $\|P_{\bm M_2'}u\|\leqslant \cos(\eta-\alpha)$, and
$$\|(\bm M_1+\bm M_2)u\|^2\leqslant \mu^2\cos^2\alpha + \nu^2\cos^2(\eta-\alpha) + 2\mu\nu\cos\theta\cos\alpha\cos(\eta-\alpha).$$
Therefore,
\begin{align*}
    &\mu^2\cos^2\alpha + \nu^2\cos^2(\eta-\alpha) + 2\mu\nu\cos\theta\cos\alpha\cos(\eta-\alpha)\\
    =&\dfrac{1}{2}\mu^2(1+\cos 2\alpha) + \dfrac{1}{2}\nu^2(1+\cos (2\eta-2\alpha)) +\mu\nu\cos\theta[\cos\eta +\cos(\eta-2\alpha)]\\
    =&\dfrac{1}{2}(\mu^2+\nu^2+2\mu\nu\cos\theta\cos\eta)\\
    &\hfill + \left(\dfrac{1}{2}\mu^2 +\dfrac{1}{2}\nu^2\cos(2\eta)+\mu\nu\cos\theta\cos\eta\right)\cos(2\alpha) + \left(\dfrac{1}{2}\nu^2\sin(2\eta)+\mu\nu\cos\theta\sin\eta\right)\sin(2\alpha)\\
    \leqslant&\dfrac{1}{2}(\mu^2+\nu^2+2\mu\nu\cos\theta\cos\eta)\\
    &\hfil + \sqrt{\left(\dfrac{1}{2}\mu^2 +\dfrac{1}{2}\nu^2\cos(2\eta)+\mu\nu\cos\theta\cos\eta\right)^2+\left(\dfrac{1}{2}\nu^2\sin(2\eta)+\mu\nu\cos\theta\sin\eta\right)^2}\\
    =&\dfrac{1}{2}\left(\mu^2+\nu^2 + 2\mu\nu\cos\theta\cos\eta+\sqrt{\left(\mu^2+\nu^2 + 2\mu\nu\cos\theta\cos\eta\right)^2-4\mu^2\nu^2\sin^2\theta\sin^2\eta}\right).
\end{align*}
The proof is complete.

\section{Proofs of Theorem \ref{thm:gap-two-term} and Corollary \ref{corr:two-term-random-scheme}}
\label{sec:appendix7}
The proof of Theorem \ref{thm:gap-two-term} is similar to the proofs of Theorem~\ref{thm:ic-gap-nonrandom} and Theorem~\ref{thm:consistency_nonrandom}, so we only point out the main steps here and omit the details.

Following the same argument as in the proof of Theorem~\ref{thm:ic-gap-nonrandom}, the expected information criteria of the true configuration is
$$\mathrm{EIC}_\kappa(m_0, n_0) = D\left[\ln\left(\lambda_2^2+\sigma^2\right) + \kappa r_0^2\right].$$
For a wrong configuration $(m,n)\in\mathcal W$, $\hat\lambda^{(m,n)}$ is obtained by
$$\hat\lambda^{(m,n)} = \|\lambda_1\mathcal R[\bm A_1\otimes \bm B_1] +\lambda_2\mathcal R[\bm A_2\otimes \bm B_2] + \sigma D^{-1/2}\mathcal R[\bm E]\|_S.$$
According to Lemma~\ref{lem:norm-of-sum} and Assumption~\ref{assump:Theta}, we have
\begin{equation}
\|\lambda_1\mathcal R[\bm A_1\otimes \bm B_1] +\lambda_2\mathcal R[\bm A_2\otimes \bm B_2]\|_S^2\leqslant \lambda_1^2\phi_1^2+\lambda_2^2\phi_2^2+2\lambda_1\lambda_2\phi_1\phi_2\xi<(\lambda_1+\lambda_2)^2. \label{eq:proof-two-term-signal}
\end{equation}
By Lemma~\ref{lem:spectral-norm-bound}, we have
\begin{multline}
    [\hat\lambda^{(m,n)}]^2\leqslant \lambda_1^2\phi_1^2 + \lambda_2^2\phi_2^2 + 2\lambda_1\lambda_2\phi_1\phi_2\xi +\sigma^2r_{m,n}^2\\ + O((\lambda_1+\lambda_2)\sigma D^{-1/4}) + O_p\left((\lambda_1+\lambda_2+\sigma)\sigma D^{-1/2}\right).\label{eq:proof-two-term-lambda-hat}
\end{multline}
With \eqref{eq:proof-two-term-lambda-hat} replacing \eqref{eq:proof-lambda-hat-bound}, the rest of the proof follows the same line of the proof of Theorem~\ref{thm:ic-gap-nonrandom}.\\
The proof of consistency is same as in the proof of Theorem~\ref{thm:consistency_nonrandom} except that the formula of $\hat\lambda^{(m,n)}$ in \eqref{eq:proof-two-term-lambda-hat} is used in \eqref{eq:proof-prob-ic-W}.

\medskip
We now prove Corollary~\ref{corr:two-term-random-scheme}.
When model \eqref{eq:model-two-term} is generated under the random scheme in Example~\ref{example:two-term-normal}, we only consider the wrong configuration close to the true configuration. It can be verified that the separation $\Delta \mathrm{EIC}(m,n)$ is larger at other configurations. Consider $(m,n)$ such that $|m_0-m|+|n_0-n|=1$. Then from Corollary~\ref{corr:further-decomp-random}, we have
$$\phi_1 = \dfrac{1}{\sqrt{2}} + O_p(r_0),\quad \phi_2 = \dfrac{1}{\sqrt{2}} + O_p(r_0).$$
Now consider the principle angles between $\mathcal R[\bm A_1\otimes \bm B_1]$ and $\mathcal R[\bm A_2\otimes\bm B_2$ as in Lemma~\ref{lem:norm-of-sum}, We have
$$\cos\theta = O_p(2^{-(m+n)}),\quad \cos\eta = O_p(2^{-(m^\dagger + n^\dagger)}).$$
By Lemma~\ref{lem:norm-of-sum}, \eqref{eq:proof-two-term-signal} can be revised to
$$\|\lambda_1\mathcal R[\bm A_1\otimes \bm B_1] +\lambda_2\mathcal R[\bm A_2\otimes \bm B_2]\|_S^2\leqslant \dfrac{\lambda_1^2}{2}+O_p(\lambda_1^2r_0).$$
Corollary~\ref{corr:two-term-random-scheme} follows immediately.
\end{document}